\numberwithin{equation}{section}
\numberwithin{figure}{section}
\theoremstyle{plain}
\newtheorem{thm}{\protect\theoremname}
  \theoremstyle{plain}
  \newtheorem{conjecture}[thm]{\protect\conjecturename}
  \theoremstyle{plain}
  \newtheorem{cor}[thm]{\protect\corollaryname}
  \theoremstyle{plain}
  \newtheorem{prop}[thm]{\protect\propositionname}
  \theoremstyle{plain}
  \newtheorem{lem}[thm]{\protect\lemmaname}
\newcommand{\F}{\mathbb{F}}
\newcommand{\Q}{\mathbb{Q}}
\newcommand{\Surj}{\textnormal{Sur}}
\newcommand{\Hom}{\textnormal{Hom}}
\newcommand{\disc}{\textnormal{disc}}
\newcommand{\Leg}[2]{\left(\frac{#1}{#2}\right)}
\providecommand{\conjecturename}{Conjecture}
  \providecommand{\corollaryname}{Corollary}
  \providecommand{\lemmaname}{Lemma}
  \providecommand{\propositionname}{Proposition}
\providecommand{\theoremname}{Theorem}
  \providecommand{\conjecturename}{Conjecture}
  \providecommand{\corollaryname}{Corollary}
  \providecommand{\lemmaname}{Lemma}
  \providecommand{\propositionname}{Proposition}
\providecommand{\theoremname}{Theorem}
\begin{document}

\title{The distribution of $H_{8}$-extensions of quadratic fields}

\author{Brandon Alberts, Jack Klys}
\begin{abstract}
We compute all the moments of a normalization of the function which
counts unramified $H_{8}$-extensions of quadratic fields, where $H_{8}$
is the quaternion group of order $8$, and show that the values of
this function determine a point mass distribution. Furthermore we
propose a similar modification to the non-abelian Cohen-Lenstra heuristics
for unramified $G$-extensions of quadratic fields for $G$ in a large
class of 2-groups, which we conjecture will give finite moments which
determine a distribution. Our method additionally can be used to determine
the asymptotics of the unnormalized counting function, which we also
do for unramified $H_{8}$-extensions. 
\end{abstract}

\maketitle

\section{Introduction}

The Cohen-Lenstra conjectures describe the distributions of $p$-class
groups of number fields in certain families \cite{cohenlenstra}.
By class field theory the class group is the Galois group of the maximal
abelian unramified extension. Thus there is a natural non-abelian
generalization of this question to describe the distribution of the
Galois groups of maximal unramified extensions.

Let $\mathcal{D}_{X}^{\pm}$ be the set of fundamental discriminants
$0<\pm d<X$. For any function $f$ on quadratic number fields define
\[
\mathbf{E}^{\pm}\left(f\right)=\lim_{X\longrightarrow\infty}\frac{\sum_{K,\disc K\in\mathcal{D}_{X}^{\pm}}f\left(K\right)}{\sum_{K,\disc K\in\mathcal{D}_{X}^{\pm}}1}.
\]
The Cohen-Lenstra conjecture for quadratic fields is equivalent to:
for all finite abelian $p$-groups $A$ 
\[
\mathbf{E}^{\pm}\left(\left|\mathrm{Sur}\left(Cl_{K,p},A\right)\right|\right)=\frac{1}{\left|A\right|^{u}}
\]
where $u=0,1$ in imaginary and real cases respectively and $Cl_{K,p}$
is the $p$ part of the class group over $K$. Thus in the non-abelian
case it is natural to study, for any non-abelian group $G$, 
\[
\mathbf{E}^{\pm}\left(\left|\mathrm{Sur}\left(\mathrm{Gal}\left(K^{un}/K\right),G\right)\right|\right).
\]
Note that this is equivalent to determining the number of unramified
extensions $L/K$ with $\mathrm{Gal}\left(L/K\right)=G$.

We will refine this question further. Fix a finite group $G'$ and
a subgroup $G$. Let $f\left(K\right)$ be the number of unramified
extensions $L/K$ with $\mathrm{Gal}\left(L/K\right)=G$ and $\mathrm{Gal}\left(L/\mathbb{Q}\right)=G'$.
For now we will consider extensions unramified only at the finite
primes. We will call such $L/K$ an unramified $\left(G',G\right)$-extension.
Define $\mathbf{E}^{\pm}\left(G',G\right)=\mathbf{E}^{\pm}\left(f\left(K\right)\right)$.
In \cite{BhargavaNonabelian} Bhargava asked about the value of $\mathbf{E}^{\pm}\left(G',G\right)$
and proved several cases 
\begin{thm}[Bhargava]
For $n=3,4,5$ 
\begin{eqnarray*}
\mathbf{E}^{\pm}\left(S_{n}\times C_{2},S_{n}\right) & = & \infty\\
\mathbf{E}^{+}\left(S_{n},A_{n}\right) & = & \frac{1}{n!}\\
\mathbf{E}^{-}\left(S_{n},A_{n}\right) & = & \frac{1}{2\left(n-2\right)!}.
\end{eqnarray*}
\end{thm}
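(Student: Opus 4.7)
The plan is to translate each average into an asymptotic count of $S_{n}$-number fields with prescribed local conditions, and then invoke the Davenport--Heilbronn theorem (for $n = 3$) and Bhargava's asymptotic formulas for quartic and quintic fields (for $n = 4, 5$).

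\medskip

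\noindent First, I set up two reductions. For $(G', G) = (S_{n}, A_{n})$, an unramified $(S_{n}, A_{n})$-extension $L/K$ arises as the Galois closure of a degree-$n$ $S_{n}$-field $F_{0}/\Q$ with quadratic resolvent $K = \Q(\sqrt{\disc F_{0}})$; a local-inertia check shows $L/K$ is unramified at finite primes iff $\disc F_{0} = \disc K$ exactly. For $(G', G) = (S_{n} \times C_{2}, S_{n})$, an unramified extension $L/K$ decomposes as $L = F \cdot K$ with $F/\Q$ an $S_{n}$-Galois extension linearly disjoint from $K$; the inertia in $\Gal(L/\Q)$ at each $p \mid \disc K$ must have order $2$ (so as to meet $S_{n} \times 1 = \Gal(L/K)$ trivially), and at $p \nmid \disc K$ must be trivial. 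Equivalently, the degree-$n$ subfield $F_{0} = F^{S_{n-1}}$ has squarefree discriminant with prime support contained in that of $\disc K$, and $\Q(\sqrt{\disc F_{0}}) \neq K$.

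\medskip

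\noindent For the finite case, the first reduction gives
\[
\sum_{\disc K \in \mathcal{D}_{X}^{\pm}} f(K) \;=\; \#\{\text{degree-}n\ S_{n}\text{-fields}\ F_{0} : \disc F_{0} \in \mathcal{D}_{X}^{\pm}\}.
\]
Davenport--Heilbronn and Bhargava give asymptotic counts of $S_{n}$-fields by discriminant, and an inclusion-exclusion on squareful divisors extracts the subcount with fundamental discriminant. Dividing by $\#\mathcal{D}_{X}^{\pm} \sim \frac{3X}{\pi^{2}}$ recovers the stated constants: $\frac{1}{n!}$ for real $K$ (totally real $F_{0}$) and $\frac{1}{2(n-2)!}$ for imaginary $K$ (signature-$(n-2,1)$ $F_{0}$, since for $n \leq 5$ complex conjugation in $S_{n}$ must be a single transposition), matching the archimedean local masses $|\mathrm{Aut}(\R^{n})|^{-1}$ and $|\mathrm{Aut}(\R^{n-2} \times \C)|^{-1}$ in Bhargava's mass formula.

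\medskip

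\noindent For the infinite case, double-count pairs $(F_{0}, K)$ via the second reduction. Fix $F_{0}$ with $D = |\disc F_{0}|$ squarefree; then $F_{0}$ contributes to $f(K)$ for every $K = \Q(\sqrt{Dm})$ with $m$ squarefree, coprime to $D$, and $K \neq \Q(\sqrt{\pm D})$, and there are $\Theta(X/D)$ such $K$ with $|\disc K| < X$. Summing and using the linear density of $S_{n}$-fields with squarefree discriminant yields
\[
\sum_{\disc K \in \mathcal{D}_{X}^{\pm}} f(K) \;\gg\; X \sum_{\substack{F_{0}\ S_{n}\text{-field}\\ |\disc F_{0}| \leq X\\ \disc F_{0}\ \text{squarefree}}} \frac{1}{|\disc F_{0}|} \;\gg\; X \log X
\]
by partial summation, while $\#\mathcal{D}_{X}^{\pm} = \Theta(X)$, so the average diverges. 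The hardest step throughout is the precise bookkeeping of local ramification at wildly ramified primes $p \leq n$ in both reductions, and the matching of these local conditions to the local factors in Davenport--Heilbronn / Bhargava's mass formulas; correctly tracking the archimedean factor is what produces the signature-dependent split $\frac{1}{n!}$ versus $\frac{1}{2(n-2)!}$.
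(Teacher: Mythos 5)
The paper does not prove this statement: it is quoted verbatim as background, attributed to Bhargava and cited to \cite{BhargavaNonabelian}, so there is no internal proof to compare against. Judged on its own, your sketch is a faithful reconstruction of the actual argument in the cited source. The two reductions are right: for $(S_{n},A_{n})$ the bijection between unramified $(S_{n},A_{n})$-extensions of $K$ and isomorphism classes of degree-$n$ $S_{n}$-fields $F_{0}$ with $\disc F_{0}=\disc K$ (using that all copies of $S_{n-1}$ in $S_{n}$ are conjugate for $n\le 5$), and the archimedean bookkeeping giving signatures $(n,0)$ and $(n-2,1)$ with masses $1/n!$ and $1/(2(n-2)!)$, which matches the Davenport--Heilbronn constants for $n=3$ and Bhargava's quartic/quintic counts for $n=4,5$. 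The divergence argument for $(S_{n}\times C_{2},S_{n})$ via double counting and $\sum_{F_{0}}1/|\disc F_{0}|\gg\log X$ is also the standard one. Two caveats: first, in the infinite case your local condition (squarefree $\disc F_{0}$ supported on $\disc K$) is sufficient but not necessary, since inertia could be generated by a product of disjoint transpositions times the nontrivial element of $C_{2}$; this is harmless for a lower bound but should be stated as such. Second, essentially all the analytic content is outsourced to the field-counting theorems with local conditions at finitely many places (including the squarefree/fundamental-discriminant sieve), so your writeup is an outline rather than a proof; that is unavoidable here, but the step "an inclusion-exclusion on squareful divisors extracts the subcount with fundamental discriminant" is precisely where the uniform tail estimates of Davenport--Heilbronn and Bhargava are needed and should be cited rather than asserted.
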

Wood conjectured an answer to this question and proved some results
in function fields which support it \cite{Wood} (we refer there for
the precise definitions). 
\begin{conjecture}[Wood]
\label{conj:-melanie} Suppose there is a unique conjugacy class
$c$ of order 2 elements of $G'$ which are not contained in $G$.
Then 
\[
E^{\pm}\left(G',G\right)=\frac{\left|H_{2}\left(G',c\right)\left[2\right]\right|}{\left|c\right|^{u}\mathrm{Aut}{}_{G'}\left(G\right)}
\]
where $u=0,1$ in the imaginary and real cases respectively. Otherwise
$E^{\pm}\left(G',G\right)=\infty$. 
\end{conjecture}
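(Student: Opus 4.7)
The plan is to transfer the function-field topological count of branched covers (in the philosophy of \cite{Wood}) to the number-field setting and then average over the quadratic base. Fix $G \triangleleft G'$ with $G'/G \cong C_2$ and let $c$ denote the unique conjugacy class of involutions in $G' \setminus G$. An unramified $(G', G)$-extension $L/K$ corresponds, up to $G$-isomorphism, to a surjection $\rho : \Gal(\bar\Q/\Q) \twoheadrightarrow G'$ whose composition with $G' \twoheadrightarrow C_2$ cuts out $K$, whose inertia at each finite prime ramifying in $K$ is generated by an involution in $c$ (the only way to keep $L/K$ unramified at the finite primes while still accounting for the ramification in $K/\Q$), and whose decomposition group at infinity is appropriately constrained in the real case. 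Passing from $\rho$ to the extension produces the $\mathrm{Aut}_{G'}(G)$ factor in the denominator.

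After this reduction, $f(K)$ equals the number of tuples $(\sigma_0, \sigma_1, \ldots, \sigma_t)$ with $\sigma_i \in c$ for $i \ge 1$ (one $\sigma_i$ for each prime dividing $\disc K$), $\sigma_0$ a lift of complex conjugation (required to lie in $c$ in the imaginary case and free in $G$ in the real case), $\prod \sigma_i = 1$ globally, and Galois closure equal to $G'$, modulo the appropriate automorphism action. The key input is the Conway--Parker theorem and its refinements: for $t$ large, the number of such tuples before imposing generation is asymptotic to $|c|^{t-1}\,|H_2(G', c)|/|G'|$, and the proportion satisfying the generation and Galois closure conditions stabilizes. The appearance of $H_2(G',c)[2]$ rather than the full $H_2(G',c)$ reflects that a fundamental discriminant $\disc K$ is determined by its squarefree radical together with a sign, so that the surjection $G' \twoheadrightarrow C_2$ is a $2$-torsion datum cutting $H_2(G', c)$ in half.

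Averaging over $\mathcal{D}_X^{\pm}$ reduces to a Landau-type count: the number of fundamental discriminants $|d| < X$ with exactly $t$ distinct prime divisors is asymptotically $X(\log\log X)^{t-1}/((t-1)!\log X)$, and at each prime the $|c|$ possible inertia generators contribute with equal frequency on average by Chebotarev density. The $|c|^{t-1}$ in the tuple count is therefore absorbed into the prime counting, and summing over $t$ via the classical Landau estimate produces the predicted main term $|H_2(G', c)[2]|/(|c|^u \mathrm{Aut}_{G'}(G))$, with the $|c|^u$ correction in the real case coming from the unconstrained decomposition at infinity. In the divergent cases, where several conjugacy classes of involutions outside $G$ exist, the same argument with more than one class contributing independently yields a sum that grows faster than $X$, giving $\mathbf{E}^{\pm}(G',G) = \infty$.

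The hard step is the passage from this heuristic tuple count to the actual count of Galois extensions. Over function fields, Hurwitz-space techniques and Ellenberg--Venkatesh--Westerland-type homological stability execute this transfer cleanly. Over number fields, one must control the obstruction to globalizing a locally prescribed $G'$-ramification datum, which requires a non-abelian analog of Scholz's reflection principle, or equivalently a class-field-theoretic obstruction calculus adapted to the chief factors of $G$. For $H_8$ this calculus can be performed explicitly because $H_8$ sits in the central extension $1 \to C_2 \to H_8 \to C_2 \times C_2 \to 1$, and the authors carry out exactly this computation; the general case demands a uniform version matching the topological Schur-multiplier input, and I expect this to be the main technical difficulty.
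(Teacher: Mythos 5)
You are attempting to prove Conjecture \ref{conj:-melanie}, but this is an open conjecture of Wood: the paper does not prove it and does not claim to. What the paper actually establishes concerns the single pair $\left(H_{8}\rtimes C_{2},H_{8}\right)$ and its powers, which fall under the divergent branch of the conjecture, and it does so by entirely different means --- Lemmermeyer's explicit Legendre-symbol criterion for the existence of unramified $H_{8}$-extensions, converted into a character sum and analyzed with the Fouvry--Kl\"uners sieve. Your proposal is, in essence, a restatement of the function-field heuristic that motivated the conjecture in the first place (Hurwitz-space component counts, Conway--Parker, homological stability in the style of Ellenberg--Venkatesh--Westerland), and that heuristic is not a proof over number fields.

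Two concrete gaps. First, the step you yourself flag at the end --- passing from the asymptotic tuple count $\left|c\right|^{t-1}\left|H_{2}\left(G',c\right)\right|/\left|G'\right|$ to an actual count of Galois extensions of $\mathbb{Q}$ --- is not a technical refinement but the entire content of the conjecture; the Conway--Parker input counts components of Hurwitz spaces over algebraically closed fields and has no known number-field substitute, so nothing in your argument produces an unconditional statement. Second, the claim that ``the $\left|c\right|^{t-1}$ in the tuple count is therefore absorbed into the prime counting'' by Chebotarev equidistribution presumes that the local inertia data at distinct primes behave independently and average out. The paper's own Theorem \ref{thm:mainthm2} shows this fails for $H_{8}$: the number of unramified $\left(H_{8}\rtimes C_{2},H_{8}\right)$-extensions has average of size $3^{\omega\left(d\right)}$, so the local contributions compound multiplicatively across the primes dividing $d$ rather than averaging to a constant, and controlling exactly these correlations (via quadratic reciprocity, the linked/unlinked index analysis, and the double oscillation estimates) is the analytic heart of the matter --- carried out here only for one group. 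Your derivation of the $\left[2\right]$-torsion in $H_{2}\left(G',c\right)\left[2\right]$ from ``a fundamental discriminant is determined by its squarefree radical together with a sign'' is likewise an analogy, not an argument.
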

Alberts verified \cite{alberts} the case of this conjecture $E^{\pm}\left(H_{8}\rtimes C_{2},H_{8}\right)$
where $H_{8}\rtimes C_{2}$ is isomorphic to the quotient of $D_{4}\oplus C_{4}$
obtained by identifying their Frattini subgroups and is the unique
group that can occur as $\mathrm{Gal}\left(L/\mathbb{Q}\right)$.
Throughout the rest of the paper we will let $H_{8}\rtimes C_{2}$
denote this group.

In this paper we consider $E^{\pm}\left(H_{8}\rtimes C_{2},H_{8}\right)$
with the imporant modification that $f\left(K\right)$ is appropriately
normalized. We show all of the $k$th moments of this new function
are finite, and in fact determine the point mass distribution. 

We will now change the definition of $f\left(K\right)$ to count extensions
unramified everywhere, including at the infinite prime, and call the
corresponding extensions unramified everywhere $\left(G',G\right)$-extensions.
We prove our theorems in this case- the case of extensions unramified
at only finite primes is simpler and follows easily from our work
(see Proposition \ref{prop:The-number-of}).

Our main result is 
\begin{thm}
\label{thm:mainthm}Let $\left(G',G\right)=\left(H_{8}\rtimes C_{2},H_{8}\right)$.
For a quadratic field $K$ with odd discriminant $d$ let $f\left(d\right)$
be the number of unramified everywhere $\left(G',G\right)$-extensions
of $K$ and let $g\left(d\right)=3^{\omega\left(d\right)}$. Then
for all $k\in\mathbb{Z}_{\ge1}$ 
\[
\mathbf{E}^{-}\left(\left(f/g\right)^{k}\right)=\left(\frac{1}{32}\right)^{k}
\]
and 
\[
\mathbf{E}^{+}\left(\left(f/g\right)^{k}\right)=\left(\frac{1}{192}\right)^{k}
\]
Thus the function $f\left(d\right)/g\left(d\right)$ determines the
point mass distribution on $\mathbb{R}$. 
\end{thm}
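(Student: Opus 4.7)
The plan is to derive an exact multiplicative expression for $f(d)$ from class field theory applied to the central extension $1 \to C_2 \to H_8 \to V_4 \to 1$, and then to evaluate the resulting $k$-fold sums of Jacobi symbols asymptotically. An unramified $(H_8 \rtimes C_2, H_8)$-extension $L/K$ with $K = \Q(\sqrt d)$ decomposes through its maximal abelian-over-$K$ subfield $M = K(\sqrt{d_1},\sqrt{d_2})$. Genus theory identifies the admissible $M/K$ with ordered triples $(d_1, d_2, d_3)$ of nontrivial fundamental discriminants satisfying $d_1 d_2 d_3 = d$ where each rational prime $p \mid d$ divides exactly one $d_i$; the ordering is natural because $\Gal(L/\Q)/H_8$ acts on the three maximal subgroups of $H_8$ containing its center, which correspond to the three quadratic subfields $K(\sqrt{d_i})/K$ of $M$. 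The conditions that $M$ lift to an unramified $H_8$-extension of $K$ which is furthermore Galois over $\Q$ with group $H_8 \rtimes C_2$ and split at the archimedean places in the real case translate, via the embedding problem, into a collection of Hilbert/Legendre symbol conditions on the primes dividing the $d_i$.

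Assembling these conditions gives an expression of the form $f(d) = c_0 \sum_{(d_1,d_2,d_3)} \prod_{p \mid d} \mathbf{1}[\text{local condition at } p]$, with the sum over ordered triple factorizations of $d$ into fundamental discriminants. For odd squarefree $d$ there are exactly $3^{\omega(d)}$ such ordered triples, since each prime chooses one of the three slots; this is precisely the normalization $g(d) = 3^{\omega(d)}$, so that $f(d)/g(d)$ becomes (up to $c_0$) the proportion of triples whose local conditions hold. The density constants $1/32$ and $1/192$ should arise from counting how many of the Legendre-symbol conditions are genuinely binding on a ``random'' triple, with the discrepancy between real and imaginary cases reflecting the extra constraints imposed by the real infinite place.

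The $k$-th moment is then
\[
\sum_{d \in \mathcal{D}_X^\pm} \left(\frac{f(d)}{g(d)}\right)^{\!k} = c_0^k \sum_{d \in \mathcal{D}_X^\pm} \frac{1}{3^{k\omega(d)}} \sum_{T_1, \dots, T_k} \prod_{j=1}^{k} \prod_{p \mid d} \mathbf{1}[\text{conditions for } T_j],
\]
where each $T_j$ ranges over ordered triple factorizations of $d$. Exchanging the sums over $d$ and over the $k$-tuples $(T_1, \dots, T_k)$ reduces the calculation to averaging products of Jacobi symbols over fundamental discriminants $d < X$ with a prescribed combinatorial partition of its prime factors across the $k$ triples. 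The main contribution comes from configurations in which the Jacobi-symbol conditions arising from the $k$ triples are multiplicatively independent, yielding exactly $c_\pm^{-k}$; degenerate configurations produce oscillatory character sums that contribute only a lower-order error, to be controlled by standard equidistribution results for Jacobi symbols over fundamental discriminants. The principal obstacle lies in this last step: performing the combinatorial accounting cleanly enough to isolate the exact constants $1/32$ and $1/192$ uniformly in $k$, and handling the degenerate character sums with errors of the right order. Once the moment identities are established, the values $c_\pm^{-k}$ grow only exponentially in $k$, so Carleman's condition holds and the Hamburger moment problem is determinate; because the point mass at $c_\pm^{-1}$ has precisely these moments, $f/g$ determines the claimed point mass distribution on $\mathbb{R}$.
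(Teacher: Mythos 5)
Your high-level strategy is the same as the paper's (Lemmermeyer-type criterion $\Rightarrow$ Legendre-symbol expression for $f(d)$ $\Rightarrow$ Fouvry--Kl\"uners-style evaluation of $k$-fold character sums $\Rightarrow$ moment determinacy via Carleman), but there are two genuine gaps. First, your structural formula $f(d)=c_{0}\sum_{(d_{1},d_{2},d_{3})}\prod_{p\mid d}\mathbf{1}[\cdots]$ with $c_{0}$ an absolute constant is wrong: above each admissible triple $(d_{1},d_{2},d_{3})$ there are not $O(1)$ but roughly $2^{\omega(d)-3}$ unramified $H_{8}$-extensions (they are the twists $L^{g}(\sqrt{\delta\mu})$ over discriminant divisors $\delta\mid d$), so the correct formula is $f(d)\approx 2^{\omega(d)-3}\cdot\#\{\text{admissible triples}\}$. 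Since the $\omega(d)$ local conditions make an admissible triple occur with heuristic probability $2^{-\omega(d)}$ among the $3^{\omega(d)}$ ordered triples, it is precisely the product $2^{\omega(d)}\cdot 3^{\omega(d)}\cdot 2^{-\omega(d)}=3^{\omega(d)}$ that justifies $g(d)=3^{\omega(d)}$. Under your formula, $f(d)/3^{\omega(d)}$ would be (up to constants) the \emph{proportion} of admissible triples, which tends to $0$, so your normalization heuristic does not produce a nonzero limit and cannot yield $1/32$ or $1/192$.

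Second, the step you flag as ``the principal obstacle'' is in fact the entire technical content of the paper, and it is not routine. After expanding $(f/g)^{k}$ one gets sums over $6^{k}$-tuples of variables $D_{u}$ carrying Jacobi symbols $\left(\frac{D_{u}}{D_{v}}\right)$; the main term comes not from ``multiplicatively independent'' configurations but from the opposite: configurations in which the large variables form an \emph{unlinked} set, so that all symbols degenerate to constants determined by congruence classes, while genuinely oscillating characters are pushed into the error term by double-oscillation estimates. Two nontrivial inputs are then required and are absent from your plan: (i) a combinatorial theorem that the maximal unlinked index sets have size exactly $3^{k}$ and are classified by $2^{k}$ ``types'' (a delicate induction occupying a full section of the paper), which is what forces every term except $j_{1}=k$ into the error term; and (ii) an exact evaluation of the resulting signed sum over admissible congruence classes, done in the paper by computing $\dim\ker M_{k}=3^{k}-2k-1$ for an explicit matrix over $\F_{2}$, which is where the constants $2^{-5k}$ and $3^{-k}2^{-6k}$ actually come from. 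Your closing determinacy argument (exponential moment growth, Carleman, point mass) is correct and matches the paper.
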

We mean by this last statement that the sequence of measures 
\[
\mu_{n}\left(U\right)=\frac{1}{\left|\mathcal{D}_{n}^{\pm}\right|}\left|\left\{ f\left(d\right)/g\left(d\right)\in U\mid d<n\right\} \right|
\]
on $\mathbb{R}$ converges to the point mass $\mu_{c}$ in distribution,
where $c=1/32$ and $1/192$ in the complex and real case respectively.

As a corollary we have the following result which ties back to the
question of the distribution of the Galois group of the maximal unramified
extension. 
\begin{cor}
\label{cor:The-density-of}The density of quadratic fields $K$ with
$\mathrm{Gal}\left(K^{un}/K\right)=H_{8}^{m}$ is equal to $0$ for
any positive $m\in\mathbb{Z}$. 
\end{cor}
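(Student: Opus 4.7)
The plan is to deduce Corollary \ref{cor:The-density-of} directly from Theorem \ref{thm:mainthm} by a short convergence-in-distribution argument. Fix $m \geq 1$ and suppose, for contradiction, that the set of fundamental discriminants $d$ with $\mathrm{Gal}(K^{un}/K) \cong H_8^m$ has positive density $\delta > 0$ inside $\mathcal{D}_X^\pm$.

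The first step is to bound $f(d)$ on this set. Every unramified everywhere $(H_8\rtimes C_2, H_8)$-extension $L/K$ is in particular an unramified $H_8$-extension, so it corresponds to a normal subgroup of $\mathrm{Gal}(K^{un}/K) \cong H_8^m$ with quotient isomorphic to $H_8$. Hence $f(d) \le C_m := |\mathrm{Sur}(H_8^m, H_8)|/|\mathrm{Aut}(H_8)|$, a constant depending only on $m$. The second step is the classical fact that the density of $d \in \mathcal{D}_X^\pm$ with $\omega(d) \le M$ tends to $0$ for any fixed $M$ (the normal order of $\omega$ is $\log\log|d|$), so $g(d) = 3^{\omega(d)}$ tends to infinity on a set of density one.

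Combining the two, choose $M$ large enough that $C_m/3^M < c/2$, where $c$ is the point mass asserted in Theorem \ref{thm:mainthm} ($c = 1/32$ in the imaginary case and $c = 1/192$ in the real case). For $X$ sufficiently large, at least a $\delta/2$ fraction of the $d \in \mathcal{D}_X^\pm$ simultaneously satisfy $\mathrm{Gal}(K^{un}/K) \cong H_8^m$ and $\omega(d) > M$, and for these we have $f(d)/g(d) < c/2$. But convergence of the empirical measures $\mu_n$ to the point mass $\mu_c$ at $c > 0$ forces the density of $d$ with $f(d)/g(d) \le c/2$ to tend to zero, a contradiction.

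The heart of the paper lies in Theorem \ref{thm:mainthm}; Corollary \ref{cor:The-density-of} is a soft consequence and I do not foresee any real obstacle beyond cleanly invoking the normal order of $\omega$. If the density is to be interpreted over all (not merely odd) fundamental discriminants, the even case is handled by a parallel argument, or simply by noting that the even discriminants form a set of density less than one and applying the same bound on that subset.
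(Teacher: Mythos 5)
Your proof is correct, but it takes a genuinely different route from the paper's. The paper deduces the corollary from the normalized version of Theorem \ref{thm:mainthm2} (rerun with $a=1/3$): the Möbius-inversion over subgroups of $H_{8}^{m'}$ shows that $|\mathrm{Sur}_{\sigma}(\mathrm{Gal}(K^{un}/K),H_{8}^{m'})|/g(d)^{m'}$ also converges to a point mass at a positive constant, so one hundred percent of quadratic fields admit at least one unramified $H_{8}^{m'}$-extension for every $m'$, which is incompatible with $\mathrm{Gal}(K^{un}/K)=H_{8}^{m}$. You instead argue by contradiction using only the $H_{8}$ case (Theorem \ref{thm:mainthm}) together with two soft inputs: the trivial bound $f(d)\le C_{m}$ on the hypothesized positive-density set, and the Hardy--Ramanujan normal order of $\omega(d)$ forcing $g(d)=3^{\omega(d)}\to\infty$ off a density-zero set; the portmanteau inequality for the closed set $[0,c/2]$ then gives the contradiction. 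Your route avoids the $H_{8}^{k}$ machinery entirely and in fact proves the stronger statement that $\mathrm{Gal}(K^{un}/K)$ is isomorphic to \emph{any} fixed finite group only on a density-zero set, since $f(d)$ is bounded whenever the Galois group is a fixed finite group; what the paper's route buys instead is the complementary positive statement, of independent interest for Wood's conjecture, that almost all quadratic fields admit unramified $H_{8}^{m}$-extensions for every $m$. Two small points to tighten: the convergence in Theorem \ref{thm:mainthm} is stated for odd discriminants, so as you note one must either run the same argument separately on each congruence class $d\equiv4,0\bmod 8$ (where Theorem \ref{thm:mainthm3} gives point masses at different positive constants) or restrict the density statement accordingly --- merely observing that even discriminants have density less than one does not by itself dispose of them; and the Hardy--Ramanujan bound should be cited in the form valid for fundamental discriminants (squarefree integers), which is standard.
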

Additionally, we prove an analogous unnormalized result which directly
generalizes the aforementioned results due to Alberts \cite{alberts}.
Let $H_{8}^{k}\rtimes_{\sigma}C_{2}$ denote the group where the action
of $\sigma$ on each coordinate gives $H_{8}\rtimes C_{2}$ according
to our definition. 
\begin{thm}
\label{thm:mainthm2}Let $k\in\mathbb{Z}_{\ge1}$, $G=H_{8}^{k}$,
and $G'=H_{8}^{k}\rtimes_{\sigma}C_{2}$. Define $\Surj_{\sigma}\left(\mathrm{Gal}\left(K^{un}/K\right),G\right)$
be the set of surjections which lift to a surjection $\mathrm{Gal}\left(K^{un}/\Q\right)\rightarrow G'$.
Then 
\[
\sum_{d\in\mathcal{D}_{X}^{-}<X}\left|\Surj_{\sigma}\left(\mathrm{Gal}\left(K^{un}/K\right),G\right)\right|=\left(\frac{1}{4}\right)^{k}\left(\sum_{d\in\mathcal{D}_{X}^{-}}3^{k\omega(d)}\right)+O\left(X\left(\log X\right)^{3^{k}-2+\epsilon}\right)
\]
and 
\[
\sum_{d\in\mathcal{D}_{X}^{+}}\left|\Surj_{\sigma}\left(\mathrm{Gal}\left(K^{un}/K\right),G\right)\right|=\left(\frac{1}{24}\right)^{k}\left(\sum_{d\in\mathcal{D}_{X}^{+}}3^{k\omega(d)}\right)+O\left(X\left(\log X\right)^{3^{k}-2+\epsilon}\right).
\]
\end{thm}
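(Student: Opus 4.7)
My plan is to extend the class field theoretic parameterization from the $k=1$ case of Alberts \cite{alberts} to a $k$-fold product setting, handle surjectivity via a M\"obius inversion that simplifies drastically because $H_{8}$ is a $2$-group, and then extract the asymptotic via a Selberg--Delange estimate for $\sum_{d \le X} 3^{k\omega(d)}$.

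First I would set up the surjection count as a M\"obius sum over the Frattini quotient. Since every maximal subgroup of a $2$-group contains the Frattini subgroup, a homomorphism $\phi : \Gal(K^{un}/K) \to H_{8}^{k}$ is surjective if and only if its composition with the Frattini projection $\pi : H_{8}^{k} \twoheadrightarrow \F_{2}^{2k}$ is surjective. By M\"obius inversion on the subspace lattice of $\F_{2}^{2k}$,
\[
|\Surj_{\sigma}(\Gal(K^{un}/K), H_{8}^{k})| = \sum_{V \le \F_{2}^{2k}} \mu(V, \F_{2}^{2k}) \cdot |\Hom_{\sigma}(\Gal(K^{un}/K), \pi^{-1}(V))|.
\]

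Second, I would evaluate the top term $V = \F_{2}^{2k}$ by decomposing coordinatewise. Because the $C_{2}$-action in $H_{8}^{k}\rtimes_{\sigma}C_{2}$ is diagonal, $\sigma$-liftability of a homomorphism is equivalent to $\sigma$-liftability of each coordinate projection, so
\[
|\Hom_{\sigma}(\Gal(K^{un}/K), H_{8}^{k})| = |\Hom_{\sigma}(\Gal(K^{un}/K), H_{8})|^{k}.
\]
By the $k=1$ count underlying \cite{alberts}, the inner quantity equals $3^{\omega(d)}/4$ in the imaginary case and $3^{\omega(d)}/24$ in the real case, modulo a smaller per-field error. Raising to the $k$-th power and summing over $d \in \mathcal{D}_{X}^{\pm}$ yields the claimed main term $(1/4)^{k}\sum 3^{k\omega(d)}$ or $(1/24)^{k}\sum 3^{k\omega(d)}$. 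For each proper $V$, the subgroup $\pi^{-1}(V)$ has strictly smaller Frattini rank, so its per-field Hom count is $O(a^{\omega(d)})$ with $a<3^{k}$, which by Selberg--Delange contributes $O(X(\log X)^{3^{k}-2})$ upon summation, absorbing into the error term.

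The main obstacle will be controlling the error to the required power of $\log X$. Since the main term is of order $X(\log X)^{3^{k}-1}$, the claimed bound $O(X(\log X)^{3^{k}-2+\epsilon})$ is only one logarithmic factor smaller, so naively raising the per-field error from the $k=1$ counting to the $k$-th power is much too lossy. I expect the correct approach is global: parameterize the $\sigma$-liftable surjections directly by conductors together with the appropriate R\'edei-type character data, assemble the generating Dirichlet series as a finite linear combination of products of Dirichlet $L$-functions with total pole order $3^{k}$ at $s=1$, and apply Selberg--Delange directly to obtain the required one-logarithm saving. Matching this global Dirichlet approach back to the M\"obius decomposition above, and keeping the R\'edei reciprocity constraints uniform across the $k$ coordinates, will be the most delicate part of the argument.
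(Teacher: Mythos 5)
Your outer skeleton coincides with the paper's: the paper also writes $\left|\Surj_{\sigma}\left(G_{d}^{ur},H_{8}^{k}\right)\right|=\sum_{A\le H_{8}^{k}}\mu_{H_{8}^{k}}(A)\left|\Hom_{\sigma}\left(G_{d}^{ur},A\right)\right|$, uses the coordinatewise decomposition $\left|\Hom_{\sigma}\left(G_{d}^{ur},H_{8}^{k}\right)\right|=\left|\Hom_{\sigma}\left(G_{d}^{ur},H_{8}\right)\right|^{k}$, and reduces everything to the $k$th moment of the $H_{8}$ count via $\#\Surj_{\sigma}\left(G_{d}^{ur},H_{8}\right)=8f(d)$ and Theorem \ref{thm:mainthm3} with $a=1$. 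The gap is in how you propose to evaluate that $k$th moment. Your second paragraph asserts that the per-field quantity ``equals $3^{\omega(d)}/4$ \ldots modulo a smaller per-field error.'' This is false: $f(d)$ is a genuinely fluctuating character sum (it vanishes whenever $\omega(d)\le2$, and the whole point of Theorem \ref{thm:mainthm} is that $f(d)/3^{\omega(d)}$ only \emph{concentrates} at $1/32$ after the full moment computation). The identity $\mathbf{E}\left(f\right)=3^{\omega(d)}/32$ holds on average, not field by field, so raising a per-field asymptotic to the $k$th power is not available even as a starting point.

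You do recognize in your third paragraph that the naive route is too lossy, but the repair you propose --- assembling the generating Dirichlet series as a finite linear combination of products of Dirichlet $L$-functions and applying Selberg--Delange --- cannot work as stated. Expanding $f(d)^{k}$ produces sums over tuples $\left(D_{u}\right)$ with $d=\prod_{u}D_{u}$ weighted by Jacobi symbols $\Leg{D_{u}}{D_{v}}$ \emph{between the summation variables}; the resulting multiple Dirichlet series does not factor into Dirichlet $L$-functions of fixed modulus, which is precisely why the paper imports the Fouvry--Kl\"uners machinery instead. Concretely, the paper partitions the tuples into boxes $\mathbf{A}$, kills every configuration containing a pair of \emph{linked} indices by double character-sum / large-sieve estimates (the families $\mathcal{F}_{1},\mathcal{F}_{2},\mathcal{F}_{3}$ in Section \ref{sec:Bounding-the-Error}), and shows the main term is carried only by configurations supported on a maximal unlinked index set. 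Establishing that these sets have size exactly $3^{k}$ (Proposition \ref{thm:maxunlinkedsets}, a nontrivial graph-theoretic induction) and evaluating the leading constant via the $\F_{2}$-linear algebra identity $\sum_{\mathcal{U}}\gamma\left(\mathcal{U}\right)=2^{3^{k}-k-1}$ (Lemmas \ref{lem:sumgamma} and \ref{thm:kerMk}) are the actual content of the proof; none of this is supplied or replaced by your outline. A secondary issue: your claim that each proper $V$ contributes a per-field bound $O\left(a^{\omega(d)}\right)$ summing into the error term also needs the same analytic apparatus, since $\left|\Hom_{\sigma}\left(G_{d}^{ur},C_{4}\right)\right|$ is governed by the $4$-rank of $Cl_{K}$ and its moments are themselves Fouvry--Kl\"uners-type results, not elementary pointwise bounds.
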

The proof of these results relies on a condition for the existence
of $H_{8}$-extensions and explicit construction thereof, due to Lemmermeyer
\cite{lemmermeyerh8} (see Proposition \ref{prop:The-number-of} below).
We use this obtain a formula for the number of such extensions of
any quadratic field. Then we build on the methods of Fouvry and Klüners
from \cite{fk1} to study the asymptotic growth of this expression.
The constant of the main term in this expression is then obtained
using combinatorial arguments with vector spaces over $\mathbb{F}_{2}$.
The proof is separated into cases depending on the sign and congruence
class of $d$ modulo 8. The cases are all qualatatively similar, so
we only present the case $d<0$ and $d\equiv1\mod4$ in the main body
of the paper. We include the computations necessary for the remaining
cases in an appendix for the sake of completeness.

For $1/3\le a$ let $\mathcal{D}_{X,n,m}^{\pm}$ be the set of discriminants
$d$ such that $\pm d>0$ and $d\equiv n\mod m$ and 
\[
S_{k}^{\pm}(X,n,m)=\sum_{d\in\mathcal{D}_{X,n,m}^{\pm}}\left(a^{\omega(d)}f(d)\right)^{k}.
\]
Both of the above theorems follow from the next result, the proof
of which constitues the bulk of this paper. 
\begin{thm}
\label{thm:mainthm3}Let $\left(G',G\right)=\left(H_{8}\rtimes C_{2},H_{8}\right)$.
For a quadratic field $K$ with discriminant $d$ let $f\left(d\right)$
be the number of unramified everywhere $\left(G',G\right)$-extensions
of $K$. Then for all $k\in\mathbb{Z}_{\ge1}$, $a\ge1/3$, and $(n,m)\in\{(4,8),(0,8)\}$
\begin{align*}
S_{k}^{-}(X,1,4) & =\frac{1}{2^{5k}}\left(\sum_{d\in\mathcal{D}_{X,1,4}^{-}}(3a)^{k\omega(d)}\right)+O\left(X\left(\log X\right)^{(3a)^{k}-a^{k}-1+\epsilon}\right)\\
S_{k}^{-}(X,n,m) & =\frac{1}{a^{k}2^{5k}}\left(\sum_{d\in\mathcal{D}_{X,n,m}^{-}}(3a)^{k\omega(d)}\right)+O\left(X\left(\log X\right)^{(3a)^{k}-a^{k}-1+\epsilon}\right)
\end{align*}
And for all $(n,m)\in\{(4,8),(0,8)\}$ 
\begin{align*}
S_{k}^{+}(X,1,4) & =\frac{1}{3^{k}2^{6k}}\left(\sum_{d\in\mathcal{D}_{X,1,4}^{+}}(3a)^{k\omega(d)}\right)+O\left(X\left(\log X\right)^{(3a)^{k}-a^{k}-1+\epsilon}\right)\\
S_{k}^{+}(X,n,m) & =\frac{1}{(3a)^{k}2^{6k}}\left(\sum_{d\in\mathcal{D}_{X,n,m}^{+}}(3a)^{k\omega(d)}\right)+O\left(X\left(\log X\right)^{(3a)^{k}-a^{k}-1+\epsilon}\right)
\end{align*}
\end{thm}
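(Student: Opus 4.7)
The plan is to follow the strategy sketched at the end of the introduction. First, I would apply Lemmermeyer's explicit criterion for the existence of an unramified $H_8$-extension of $K$ (Proposition \ref{prop:The-number-of}) to rewrite $f(d)$ as a sum over ordered decompositions $|d|=d_1 d_2 d_3$ into three coprime discriminants satisfying certain Legendre symbol conditions. Raising to the $k$-th power and multiplying by $a^{k\omega(d)}$, I would exchange the order of summation to express $S_k^{\pm}(X,n,m)$ as a weighted sum over ordered $3^k$-tuples $(e_1,\ldots,e_{3^k})$ of pairwise coprime squarefree integers whose product lies in $\mathcal{D}_{X,n,m}^{\pm}$, where the weight is $a^{k\omega(d)}$ times a product of Legendre symbols $\Leg{e_i}{e_j}^{\epsilon_{ij}}$ indexed by the Lemmermeyer conditions across the $k$ copies.

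Next, I would split the sum according to whether all the character exponents $\epsilon_{ij}$ vanish. The diagonal contribution is multiplicative in the primes of $d$, so Selberg--Delange or Wirsing type estimates produce a main term of order $\sum_{d}(3a)^{k\omega(d)} \asymp X(\log X)^{(3a)^k-1}$ with a constant depending only on $(n,m)$, the sign, and the count of admissible diagonal configurations. The off-diagonal contributions are controlled by the quadratic character sum technology of Fouvry--Kl\"uners \cite{fk1}: the oscillation of non-trivial Jacobi symbols in arithmetic progressions (via Siegel--Walfisz together with the quadratic large sieve) produces an extra saving in the exponent of $\log X$ for each genuinely non-trivial character, which after bookkeeping over the partition of the $3^k$ indices into character-trivial and character-non-trivial blocks yields the stated error term $O(X(\log X)^{(3a)^k-a^k-1+\epsilon})$.

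The hard part will be the explicit computation of the leading constants $1/2^{5k}$ and $1/(a^k 2^{5k})$, along with their real analogues. After identifying the diagonal configurations with $3^k$-tuples of vectors in certain $\F_2$-vector spaces attached to the prime divisors of $d$, the constants become the proportions of such tuples that satisfy the Lemmermeyer compatibility conditions together with the prescribed congruence class of $d$ modulo $8$, sign of $d$, and behavior at infinity. I plan to compute these proportions by linear algebra over $\F_2$, tracking the constraints imposed by genus theory and by the local behavior at $2$. The $a^k$ discrepancy between the case $(n,m)=(1,4)$ and the cases $(n,m)\in\{(4,8),(0,8)\}$ is precisely the contribution of the prime $2$, which is absent from $\omega(d)$ in the first case and present in the second, so this factor can be tracked systematically through the enumeration. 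I would execute the full calculation for $d<0$, $d\equiv 1\pmod 4$ in the main text and defer the essentially parallel case analyses for the remaining sign and congruence classes to the appendix.
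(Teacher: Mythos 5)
Your overall architecture matches the paper's: Lemmermeyer's criterion to write $f(d)$ as a sum of Jacobi-symbol products over factorizations, Fouvry--Kl\"uners oscillation technology for the error term, $\F_{2}$-linear algebra for the leading constants, the factor of the prime $2$ explaining the $a^{k}$ discrepancies, and the remaining sign/congruence cases deferred to an appendix. However, two points in your plan are off in ways that matter. The lesser one is bookkeeping: each factor $\prod_{p\mid d_{3}}\left(1+\Leg{d_{1}d_{2}}{p}\right)$ must be expanded as $\sum_{a\mid d_{3}}\Leg{d_{1}d_{2}}{a}$ before you have a character sum, so each of the $k$ factorizations contributes six variables, not three, and the common refinement is indexed by $\{0,\dots,5\}^{k}$, i.e.\ $6^{k}$ variables. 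Only $3^{k}$ of them survive into the main term, and identifying which ones --- the classification of the largest maximal unlinked subsets of $\{0,\dots,5\}^{k}$ as the $2^{k}$ sets $\prod_{i}s_{i}$ with $s_{i}\in\{\{1,3,5\},\{0,2,4\}\}$ --- is a genuine combinatorial theorem proved by induction in the paper, not a routine consequence of the setup; it is also what pushes the cross terms of the multinomial expansion of $f^{k}=(f_{1}-f_{2}+2^{\omega(d)-3})^{k}$ into the error term.

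The more serious gap is your dichotomy ``main term $=$ all character exponents vanish, error term $=$ at least one nontrivial symbol.'' If both $\Leg{e_{u}}{e_{v}}$ and $\Leg{e_{v}}{e_{u}}$ occur, their product equals $(-1)^{\frac{e_{u}-1}{2}\frac{e_{v}-1}{2}}$ by reciprocity: there is no oscillation to exploit, such terms cannot be absorbed into the error term, and in fact they dominate. The correct split is by \emph{linkedness} --- a pair of indices is harmless exactly when the total number of symbols between them is even --- and the main term ranges over configurations whose large variables are pairwise unlinked, most of which do carry Jacobi symbols and hence reciprocity signs. Tracking those signs against the congruence constraints mod $4$ (and mod $8$ in the even-discriminant cases) is precisely the content of the paper's evaluation of $\sum_{\mathcal{U}}\gamma(\mathcal{U})$ via the kernel computation $\dim\ker M_{k}=3^{k}-2k-1$. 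Discarding every configuration with a visible symbol, as your ``diagonal'' prescribes, would retain only the single all-$\{1,3,5\}$ type out of $2^{k}$ and produce the wrong constants $1/2^{5k}$, $1/(3^{k}2^{6k})$, etc., while the discarded doubled-symbol terms could not legitimately be bounded by $O\left(X\left(\log X\right)^{(3a)^{k}-a^{k}-1+\epsilon}\right)$.
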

In fact Lemmermeyer exhibits conditions for the existence of a multitude
of unramified $\left(G',G\right)$-extensions \cite{lemmermeyer2groups}
all of which are $2$-groups with $\left[G':G\right]=2$. However
it should be noted that from that list only $H_{8}$ and $D_{4}$
can occur as the Galois group of an unramified extension of quadratic
field which is Galois over $\mathbb{Q}$.

However it seems plausible that there are other examples of the form
$\left(G',G\right)$ for which the corresponding extensions can be
counted using similar expression built out of Legendre symbols. In
such cases we would expect similar methods to work in computing finite
moments with the appropriate normalizations, though it is not clear
what distributions to expect. We make the following conjecture for
all unramified $(G',G)$-extensions, and which we make more specific
in the case of $\left(D_{4}\times C_{2},D_{4}\right)$ using the above
condition of Lemmermeyer combined with a heuristic which we detail
in Section \ref{sec:Future-Work}. 
\begin{conjecture}
\label{conj:For-a-quadratic} For a quadratic field $K$ with discriminant
$d$, and an admissible pair of $2$-groups $(G',G)$ as defined in
\cite{Wood}, let $f(d)$ be the number of  $(G',G)$-extensions of
$K$. Then there exists a number $0<a<1$ such that 
\[
\mathbf{E}\left(\left(a^{\omega(d)}f(d)\right)^{k}\right)<\infty
\]
and these moments determine a distribution.

In the case of $\left(G',G\right)=\left(D_{4}\times C_{2},D_{4}\right)$
we have $a=1/4$. 
\end{conjecture}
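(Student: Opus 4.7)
The plan is to extend the three-step strategy of Theorem \ref{thm:mainthm3} to a general admissible pair $(G',G)$ of 2-groups in the sense of \cite{Wood}. That strategy consists of (i) a Lemmermeyer-type criterion \cite{lemmermeyer2groups} expressing the count $f(d)$ as a sum over certain factorizations of $d$ weighted by products of Legendre symbols, (ii) a Fouvry--Klüners-style harmonic analysis \cite{fk1} on the resulting character sums to extract the leading asymptotic, and (iii) a linear-algebraic computation over $\mathbb{F}_2$ yielding the leading constant. The conjecture asserts that this pipeline continues to produce a main term with finite $k$th moments once $f$ is multiplied by $a^{\omega(d)}$ for some $0<a<1$, and that these moments determine a measure on $\mathbb{R}$.

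For the case $(D_4 \times C_2, D_4)$ specifically, the first step is to invoke Lemmermeyer's explicit parametrization of unramified $D_4$-extensions: each such extension should correspond to a coprime factorization $d = d_1 d_2$ of fundamental discriminants satisfying a Legendre-symbol compatibility of the form $(d_1/d_2)=1$, and the restriction to $D_4 \times C_2$ singles out the extensions on which the outer $C_2 = \Gal(K/\mathbb{Q})$ acts trivially on the Galois closure. This expresses $f(d)$ as a sum of roughly $4^{\omega(d)}$ Legendre-symbol products, so $a = 1/4$ is precisely the normalization that cancels this combinatorial growth. Raising $(a^{\omega(d)} f(d))^k$ and expanding then produces a $k$-fold sum over tuples of factorizations in which each summand is a product of $O(k^2)$ Legendre symbols between prime divisors of $d$.

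The analytic step (ii) should follow \cite{fk1} in spirit: one splits each character sum into a diagonal piece (all pairwise Legendre symbols equal to $+1$, contributing the main term) and an off-diagonal remainder bounded via the large sieve. The diagonal contribution then reduces, as in the present paper, to counting solutions of a system of quadratic conditions inside $\mathbb{F}_2^{\omega(d)}$, which a linear-algebra argument should evaluate as $C_{(G',G)} \cdot b^{k\omega(d)}$ with $b = 1/a$. Once the main term and error are in hand, finiteness of all moments combined with Carleman's criterion promotes the computation to a genuine distributional limit, establishing the second half of the conjecture.

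The principal obstacle is the combinatorial constant for general admissible $(G',G)$. For $H_8$ the proof exploits the rigidity of the embedding problem $C_2^2 \to H_8$ together with the smallness of the Schur multiplier, so the quadratic system cutting out the diagonal simplifies to a readily enumerated affine subspace; for larger 2-groups one must analyse $H_2(G',c)[2]$ and its interaction with the class-field-theoretic parametrization of unramified extensions carefully enough to extract a closed form, and it is not a priori clear that the normalizing $a$ is always rational with small denominator. A secondary difficulty is uniformity in $k$: the error exponent $(3a)^k - a^k - 1 + \epsilon$ from Theorem \ref{thm:mainthm3} will need a genuine analogue for $(G',G)$, and ensuring that it stays strictly below the main-term exponent for all $k$ may require sharpening the Fouvry--Klüners estimates, since the number of Legendre characters per summand scales with both $|G|$ and $k$.
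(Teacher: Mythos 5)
The statement you are addressing is a conjecture, and the paper does not prove it either: the authors only supply, in Section \ref{sec:Future-Work}, a heuristic justification for the value $a=1/4$ in the $(D_{4}\times C_{2},D_{4})$ case. So the real question is whether your strategy sketch matches theirs and whether your derivation of $a=1/4$ is sound. Your three-step pipeline (Lemmermeyer criterion, Fouvry--Kl\"uners character-sum analysis, $\F_{2}$-linear algebra for the constant) is exactly the program the authors have in mind, and your list of obstacles (the combinatorial constant for general $(G',G)$, uniformity of the error exponent in $k$) is a fair account of why this remains a conjecture. One caveat: the paper explicitly remarks that Theorem \ref{thm:mainthm} does not fit the finite case of Wood's Conjecture \ref{conj:-melanie}, so leaning on $H_{2}(G',c)[2]$ to predict the constant is probably not the right organizing principle here.

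The concrete gap is in your derivation of $a=1/4$. You posit a two-part coprime factorization $d=d_{1}d_{2}$ with a compatibility condition, and then assert that this "expresses $f(d)$ as a sum of roughly $4^{\omega(d)}$ Legendre-symbol products." A two-part factorization gives only $2^{\omega(d)}$ terms, so your accounting would point to $a=1/2$, not $1/4$. The paper's formula is
\[
f_{(D_{4}\times C_{2},D_{4})}(d)=\frac{1}{2}\sum_{d=d_{1}d_{2}d_{3}}\frac{\prod_{i}2^{\omega(d_{i})-1}}{2^{\omega(d)}}\,2^{\omega(d_{3})}\prod_{p\mid d_{1}}\left(1+\Leg{d_{2}}{p}\right)\prod_{p\mid d_{2}}\left(1+\Leg{d_{1}}{p}\right),
\]
a \emph{three}-part factorization in which $d_{3}$ carries no symbol conditions but contributes a multiplicity $2^{\omega(d_{3})}$ of extensions. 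Under the heuristic that each symbol is $\pm1$ with probability $1/2$, the product of the two $\prod(1+\cdot)$ factors has expected value $1$, and the normalization is governed by
\[
\sum_{d=d_{1}d_{2}d_{3}}2^{\omega(d_{3})}=(1+1+2)^{\omega(d)}=4^{\omega(d)},
\]
which is where $a=1/4$ actually comes from. Your proposal reaches the right constant only because you asserted it; the factorization structure you describe does not produce it. If you want to push the program further, the first step is to extract the correct Lemmermeyer-type formula for each pair $(G',G)$ from \cite{lemmermeyer2groups} and read off $a$ as the reciprocal of the weighted factorization count, as the paper's heuristic does.
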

What the value of $a$ and the form of the distribution should be
is unclear at this time. The result of Theorem \ref{thm:mainthm}
does not seem to fit with the finite case of Conjecture \ref{conj:-melanie}.

In each case $a$ seems to be at least partially related to the genus
field $K^{gen}$ of $K$. For example in Theorem \ref{thm:mainthm}
where $a=1/3$ it can be seen that $g\left(d\right)=3^{\omega\left(d\right)}/6$
is the number of subfields $K^{gen}$ of the form $K\left(\sqrt{d_{1}},\sqrt{d_{2}},\sqrt{d_{3}}\right)$
with $d=d_{1}d_{2}d_{3}$. All unramified $H_{8}$-extensions of $K$
intersect $K^{gen}$ in one of these, though not all factorizations
can occur.

We remark that our method of normalizing by $g$ is reminiscent of
Gerth's idea \cite{gerthprank,gerth4rankidea} of replacing $Cl_{K}\left[p\right]$
by $Cl_{K}\left[p\right]/Cl_{K}\left[p\right]^{G}$ to obtain finite
moments in the abelian setting in the case when $p$ divides the degree
of $K$. There $Cl_{K}\left[p\right]^{G}$ corresponds to the whole
genus field. Gerth's conjecture was proven by Fouvry and Klüners for
$p=2$ \cite{fk1} and for $p=3$ by the second author also utilizing
similar methods \cite{klysdegreep}. Thus the case studied in this
paper should be considered the non-abelian analog of that situation.

\subsection{Acknowledgements}

The authors would like to thank Nigel Boston, Jacob Tsimerman and
Melanie Wood for helpful suggestions and encouragement, as well as
the organizers and participants of the 2016 Arithmetic Statistics
and Cohen-Lenstra conference at the University of Warwick where this
work began. The first author was supported by the National Science
Foundation grant DMS-1502553.

\section{Counting $H_{8}$ extensions}

First we restate the results of Lemmermeyer from \cite{lemmermeyerh8}
which are key to our expression for the number of unramified everywhere
$\left(H_{8}\rtimes C_{2},H_{8}\right)$ extensions of a quadratic
field $K$. 
\begin{thm}[Lemmermeyer]
Let $K$ be a quadratic field with discriminant $d$. There exists
an unramified $\left(H_{8}\rtimes C_{2},H_{8}\right)$-extension of
$K$ if and only if there exists a nontrivial factorization $d=d_{1}d_{2}d_{3}$
into relatively prime discriminants such that 
\[
\left(\frac{d_{1}d_{2}}{p_{3}}\right)=\left(\frac{d_{2}d_{3}}{p_{1}}\right)=\left(\frac{d_{3}d_{1}}{p_{2}}\right)=1
\]
for all $p_{i}\mid d_{i}$. 
\end{thm}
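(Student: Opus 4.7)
The plan is to reduce the existence of an unramified $(H_8\rtimes C_2, H_8)$-extension of $K$ to a central embedding problem over $\Q$ and then to express its obstruction in terms of Hilbert symbols that specialize to the stated Legendre symbols. Given such an $L/K$, the three cyclic subgroups of order four in $H_8$ are normal in $H_8\rtimes C_2$, and their fixed fields provide three intermediate quadratic extensions of $K$ inside $L$; since each is forced to be Galois over $\Q$, each has the form $\Q(\sqrt{d},\sqrt{d_\ell})$ for some discriminant $d_\ell$. Together they generate an unramified $(C_2\times C_2)$-extension $M/K$ which is Galois over $\Q$ with $\Gal(M/\Q)\cong C_2^{3}$, and the relation among the three intermediate quadratic subfields forces $d=d_1 d_2 d_3$ up to squares.

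By genus theory, unramified $(C_2\times C_2)$-extensions $M/K$ that are Galois over $\Q$ correspond bijectively to nontrivial factorizations $d=d_1 d_2 d_3$ into pairwise coprime discriminants. The theorem then reduces to the statement that a given such $M$ lifts to an unramified $H_8$-extension $L/K$ with the prescribed $\Gal(L/\Q)$ precisely when the three Legendre identities hold. This lift is a central $C_{2}$-embedding problem, whose obstruction I would compute in $H^{2}(G_{\Q},\F_{2})$ as a product of local Hilbert symbols $(d_i,d_j)_v$, using Witt's classical criterion for $H_{8}$-extensions of biquadratic fields. At an odd prime $p_k\mid d_k$ the symbol $(d_i d_j,p_k)$ collapses to $\Leg{d_id_j}{p_k}$, so vanishing of the global obstruction is equivalent to the stated conditions; at the infinite place and at $2$ the local Hilbert symbols are forced to be trivial by the coprime-discriminant hypothesis and the usual sign/congruence conventions on fundamental discriminants, while unramifiedness above $2$ follows from choosing the lift produced by Witt's construction.

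The main difficulty I anticipate is the asymmetry of Witt's construction: writing an $H_{8}$-extension of $\Q(\sqrt{d_1},\sqrt{d_2})$ explicitly privileges one of the three factors, whereas the Legendre conditions in the theorem are manifestly symmetric across $d_1,d_2,d_3$. Reconciling the two would require invoking quadratic reciprocity (equivalently the product formula for the Hilbert symbol) to show that the single asymmetric obstruction really encodes the full symmetric family of three conditions, and separately verifying that the Galois group of the lifted extension is $H_8\rtimes C_2$ rather than one of the competing order-$16$ extensions of $C_2$ by $H_8$. This last point is what makes the criterion specific to $H_8$ rather than to $D_4$ or $C_4\times C_2$, and it is the step where the non-trivial input of global class field theory enters.
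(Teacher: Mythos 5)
This theorem is quoted from Lemmermeyer \cite{lemmermeyerh8}; the paper itself supplies no proof, so your proposal can only be measured against the argument in that reference, which it does track in outline: pass to the genus-theoretic $C_2\times C_2$-subextension $K(\sqrt{d_1},\sqrt{d_2},\sqrt{d_3})$, realize the lift to $H_8$ as a central $C_2$-embedding problem over $\Q$, and translate the obstruction into Hilbert and Legendre symbols via Witt's criterion. That is the right strategy.

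As written, however, there are genuine gaps rather than omissions of routine detail. First, you conflate two different obstructions: the vanishing of the global class in $H^2(G_{\Q},\F_2)$ governs only whether the embedding problem has \emph{some} solution, while the theorem requires a solution unramified over $K$ at every finite prime. Since the solutions form a torsor under $\Q^{\ast}/(\Q^{\ast})^{2}$, one must show the twisting parameter can be chosen to kill the ramification of $L/K$ at every $p\mid d$ simultaneously; it is exactly this local analysis of inertia in the order-$16$ group (inertia must have order $2$ and meet $H_8$ trivially) that produces the separate conditions $\Leg{d_1d_2}{p_3}=1$ for \emph{each} $p_3\mid d_3$, not the vanishing of a single Brauer class. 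Second, the claim that the local symbols at $2$ and at $\infty$ are forced to be trivial by the coprimality of the $d_i$ is false: when $d$ is even one of the $d_i$ is an even discriminant and the condition at $p=2$ (read as a Kronecker symbol) is a genuine constraint --- the appendix of this paper manipulates precisely those factors $\left(1+\Leg{d_2d_3}{2}\right)$ --- and when $d<0$ the infinite place contributes, which is why Proposition \ref{prop:The-number-of} must treat ramification at infinity separately. Finally, the two issues you flag as anticipated difficulties (symmetrizing Witt's asymmetric construction via reciprocity, and verifying that the resulting order-$16$ Galois group is $H_{8}\rtimes C_{2}$ rather than another extension of $C_2$ by $H_8$) are not side remarks: they are the substance of Lemmermeyer's proof, and deferring them leaves the proposal a plan rather than a proof.
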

Let $K^{gen}=K\left(\sqrt{p_{1}},\ldots\sqrt{p_{r}}\right)$ where
$p_{i}$ are the prime fundamental discriminants dividing $d$. Any
unramified $H_{8}$-extension $L$ of $K$ will satisfy $L\cap K^{gen}=K\left(\sqrt{d_{1}},\sqrt{d_{2}},\sqrt{d_{3}}\right)$
where the $d_{i}$ are coprime discriminants and $d=d_{1}d_{2}d_{3}$.
Let $L^{g}=L\cap K^{gen}$. Then $L=L^{g}\left(\sqrt{\mu}\right)$
for some $\mu\in L^{g}$. 
\begin{prop}[Lemmermeyer]
\label{prop:Suppose--is}Suppose $L$ is an unramified $\left(H_{8}\rtimes C_{2},H_{8}\right)$-extension
of $K$ such that $L=L^{g}\left(\sqrt{\mu}\right)$. Then all the
other such extensions $M$ of $K$ with $M\cap K^{gen}=L^{g}$ are
exactly given by $L^{g}\left(\sqrt{\delta\mu}\right)$ for each $\delta\in\mathbb{Z}$
a discriminant such that $\delta\mid d$. 
\end{prop}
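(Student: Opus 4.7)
The plan is to parametrize candidate extensions $M = L^{g}(\sqrt{\nu})$ by the class of $\nu/\mu$ in the $\F_{2}$-vector space $V = (L^{g})^{*}/((L^{g})^{*})^{2}$, and then successively impose the defining conditions ($M/\Q$ Galois with group $H_{8}\rtimes C_{2}$, the intersection $M\cap K^{gen}$ exactly equal to $L^{g}$, and $M/K$ unramified at all finite and infinite primes) to cut $V$ down to the finite set $\{[\delta] : \delta \in \mathbb{Z} \text{ is a discriminant dividing } d\}$.

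First I would translate to Kummer theory. Since $\Gal(L/L^{g})$ has order $2$, writing $L = L^{g}(\sqrt{\mu})$ is possible, and any other quadratic extension of $L^{g}$ inside a fixed algebraic closure has the form $L^{g}(\sqrt{\nu})$ with $\nu$ unique modulo $((L^{g})^{*})^{2}$. Setting $\nu = \delta\mu$ turns the question into: which classes $[\delta] \in V$ produce an admissible $M$?

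Next I would impose that $M/\Q$ is Galois. Since $L/\Q$ is already Galois, $[\mu]$ is $\Gal(L^{g}/\Q)$-invariant in $V$, so the same must hold for $[\delta]$. I would compute $V^{\Gal(L^{g}/\Q)}$ by applying Hilbert 90 to the multiquadratic extension $L^{g}/\Q$; the invariants are generated by rational classes in $\Q^{*}/(\Q^{*})^{2}$ together with the classes of the various products of the $\sqrt{d_{i}}$. The hypothesis $M\cap K^{gen} = L^{g}$ eliminates contributions involving products of the $\sqrt{d_{i}}$ (which would enlarge the intersection with the genus field beyond $L^{g}$), and the requirement that $\Gal(M/K) = H_{8}$ rather than a different group of order $8$ rules out the remaining non-rational combinations, reducing $[\delta]$ to the image of some element of $\Q^{*}/(\Q^{*})^{2}$.

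Finally I would impose unramifiedness of $M/K$ at every finite and archimedean prime. Comparing with the known unramified $L/K$ shows that the rational representative of $\delta$ must be supported only on primes dividing $d$, and a prime-by-prime local analysis via Kummer symbols (most delicately at the wild prime $2$ and at the real place) forces $\delta$ to be a discriminant dividing $d$. Conversely, for each such $\delta \mid d$ I would verify directly that $L^{g}(\sqrt{\delta\mu})$ satisfies all the conditions, since the ramification of $\delta$ is already absorbed by $L^{g}/K$. The main obstacle will be this final local analysis: showing that the only rational classes whose adjunction to $L^{g}(\sqrt{\mu})$ keeps the extension unramified over $K$ and Galois over $\Q$ of type $H_{8}\rtimes C_{2}$ are exactly the discriminant divisors of $d$, in particular handling the subtleties at $p=2$ and the archimedean place where wild and real ramification obstruct the naive characterization.
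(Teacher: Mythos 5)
The paper does not actually prove this proposition: it is quoted verbatim from Lemmermeyer (\cite{lemmermeyerh8}), so there is no internal argument to compare yours against. Judged on its own terms, your proposal is a sensible high-level strategy (Kummer theory over $L^{g}$, Galois descent to rational classes, then local conditions), but it is a plan rather than a proof, and the decisive content is exactly what you defer. Concretely: (i) computing $\left((L^{g})^{*}/((L^{g})^{*})^{2}\right)^{\Gal(L^{g}/\Q)}$ for the non-cyclic multiquadratic field $L^{g}$ is not a direct application of Hilbert 90 --- the cokernel of the map from $\Q^{*}/(\Q^{*})^{2}$ is controlled by $H^{2}(\Gal(L^{g}/\Q),\mu_{2})$, which is nontrivial here, and identifying which of those classes actually arise requires an argument; (ii) the assertion that $M\cap K^{gen}=L^{g}$ "eliminates contributions involving products of the $\sqrt{d_{i}}$" is unjustified --- adjoining $\sqrt{\sqrt{d_{1}}\,\mu}$ need not enlarge $M\cap K^{gen}$, since that would force $M\subset K^{gen}$; what actually excludes such classes is the requirement that $\Gal(M/K)\cong H_{8}$ and $\Gal(M/\Q)\cong H_{8}\rtimes C_{2}$, which you mention only in passing; (iii) the heart of the statement --- that among rational classes exactly the discriminant divisors of $d$ preserve unramifiedness, with the delicate analysis at $2$ --- is precisely what you label "the main obstacle" and leave undone.

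There is also a mismatch of conventions that would make your version of the claim false as written. In this paper an unramified $(G',G)$-extension is unramified only at the finite primes; the archimedean place is handled separately in Proposition \ref{prop:The-number-of}, whose proof twists by $\delta=-p$ precisely to repair ramification at infinity. If you impose unramifiedness at the real place as well, then for real $K$ not every discriminant $\delta\mid d$ remains admissible (negative $\delta$ changes the behavior at infinity), so the admissible twists would no longer be all of $\{\delta:\delta\mid d\text{ a discriminant}\}$ and the stated parametrization would fail. You should drop the archimedean condition from the list you impose, and then actually carry out the local computation at primes dividing $d$, at primes away from $d$, and at $2$; until that is done the argument is a scaffold, not a proof.
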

We use these results to prove the next proposition. 
\begin{prop}
\label{prop:The-number-of}The number of unramified everywhere $\left(H_{8}\rtimes C_{2},H_{8}\right)$-extensions
of $K$ is
\begin{align*}
f(d)= & \beta(d)2^{-3-\alpha(d)}\sum_{d=d_{1}d_{2}d_{3}}\prod_{p\mid d_{3}}\left(1+\Leg{d_{1}d_{2}}{p}\right)\prod_{p\mid d_{2}}\left(1+\Leg{d_{1}d_{3}}{p}\right)\prod_{p\mid d_{1}}\left(1+\Leg{d_{2}d_{3}}{p}\right)
\end{align*}
Where the sum is over nontrivial factorizations into coprime fundamental
discriminants $d=d_{1}d_{2}d_{3}$ up to permutation, and 
\begin{align*}
\alpha(d) & =\begin{cases}
1 & d>0\text{ and }\exists p\mid d,p\equiv3\mod4\\
0 & \text{else}
\end{cases}\\
\beta(d) & =\begin{cases}
1 & d<0\text{ or }\exists p\mid d,p\equiv3\mod4\\
\text{between }0\text{ and }1 & \text{else}
\end{cases}.
\end{align*}
\end{prop}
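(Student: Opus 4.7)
The plan is to combine Lemmermeyer's existence criterion (stated just above) with his parametrization of sibling extensions (Proposition \ref{prop:Suppose--is}), then account for ramification at the infinite prime. First, for a coprime factorization $d=d_{1}d_{2}d_{3}$ of fundamental discriminants, each factor $\tfrac{1}{2}\bigl(1+\Leg{d_{i}d_{j}}{p}\bigr)$ is the indicator that the corresponding Legendre condition holds. Hence the triple product in the claim equals $2^{\omega(d)}$ on factorizations satisfying Lemmermeyer's criterion and $0$ otherwise; writing $N(d)$ for the number of valid (unordered) factorizations, the sum in the proposition equals $2^{\omega(d)}N(d)$.

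Next, each valid factorization determines a unique $L^{g}=K(\sqrt{d_{1}},\sqrt{d_{2}},\sqrt{d_{3}})$. By Proposition \ref{prop:Suppose--is}, the $(H_{8}\rtimes C_{2},H_{8})$-extensions $L$ unramified at the finite primes with $L\cap K^{gen}=L^{g}$ are exactly the $L^{g}(\sqrt{\delta\mu})$ as $\delta$ runs over the $2^{\omega(d)}$ fundamental-discriminant divisors of $d$. Two such $\delta$'s give the same extension iff they differ by a square in $L^{g}$, i.e., lie in the same coset of the subgroup $\{1,d_{1},d_{2},d_{3},d_{1}d_{2},d_{1}d_{3},d_{2}d_{3},d\}$. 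Since $d_{1},d_{2},d_{3}$ are multiplicatively independent modulo squares in $\mathbb{Q}^{\times}$ (being pairwise coprime nontrivial discriminants), this subgroup has order $8$, so there are exactly $2^{\omega(d)-3}$ distinct extensions per valid $L^{g}$. This produces the $2^{-3}$ in the formula.

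It remains to restrict to extensions unramified at the infinite prime. If $d<0$, $K$ is totally complex and this is automatic, giving $\alpha(d)=0$ and $\beta(d)=1$. If $d>0$, $L$ must be totally real, which translates into a positivity constraint on $\delta\mu$ at every real embedding of $L^{g}$. When some $p\mid d$ satisfies $p\equiv 3\pmod 4$, the prime discriminant $p^{*}=-p$ is a negative divisor of $d$; I would argue that the positivity constraint is then a nontrivial linear condition on $\delta$ modulo the trivially-acting subgroup, cutting the count by an exact factor of $\tfrac{1}{2}$ and yielding $\alpha(d)=1$, while a separate matching argument between "good" factorizations (all $d_{i}>0$) and "bad" ones (two $d_{i}<0$) shows that $\beta(d)=1$. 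When every $p\mid d$ is $\equiv 1\pmod 4$, all discriminant divisors of $d$ are positive and the positivity of $\delta\mu$ reduces to the per-$L^{g}$ global condition $\mu>0$ at every real embedding of $L^{g}$, producing $\beta(d)\in[0,1]$.

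The main obstacle will be this last case, where $\beta(d)$ depends on the arithmetic of the specific $\mu$ witnessing the Lemmermeyer criterion and admits no clean combinatorial expression. Fortunately, the subsequent asymptotic analysis uses only the crude bound $\beta(d)\le 1$, so no precise value is required.
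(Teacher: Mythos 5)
Your proposal is correct and follows essentially the same route as the paper: the Legendre products as indicators of Lemmermeyer's criterion, Proposition \ref{prop:Suppose--is} giving $2^{\omega(d)-3}$ extensions per admissible factorization (you make the order-$8$ coset count explicit where the paper merely asserts it), and the sign character $\delta\mapsto\delta/|\delta|$ on discriminant divisors of $d$ to handle the infinite place, with surjectivity when some $p\equiv 3\bmod 4$ divides $d$ forcing the extra factor $2^{-\alpha(d)}$ and triviality otherwise leaving the undetermined fraction $\beta(d)$. The only point you leave as vague as the paper does is that factorizations of a positive $d$ with two negative $d_{i}$ contribute zero to the sum, which the authors likewise only justify in the discussion following the proposition.
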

\begin{proof}
We have that 
\begin{align*}
2^{-\omega(d)}\prod_{p\mid d_{3}}\left(1+\Leg{d_{1}d_{2}}{p}\right)\prod_{p\mid d_{2}}\left(1+\Leg{d_{1}d_{3}}{p}\right)\prod_{p\mid d_{1}}\left(1+\Leg{d_{2}d_{3}}{p}\right)
\end{align*}
is $1$ if $d=d_{1}d_{2}d_{3}$ is an admissible factorization and
corresponds to an extension $L/K$ unramified at all finite places,
and $0$ otherwise. We have for each factorization $2^{\omega(d)-3}$
such extensions unramified at all finite places. For $d<0$, this
is the same as unramified everywhere and the result follows immeditaley.

Suppose $d>0$. Then we have $d_{1},d_{2},d_{3}>0$, and a map 
\begin{align*}
\varphi:Cl(K)[2]/\langle d_{1},d_{2},d_{3}\rangle\rightarrow\{+1,-1\}
\end{align*}
sending $d'\mapsto d'/|d'|$. If there exists a prime $p\mid d$ with
$p\equiv3\mod4$, then $-p\mid d$ is a discriminant, making $\varphi$
surjective. If $L^{g}(\sqrt{\mu})/K$ is an extension unramified at
all finite places, but ramified at the infinite place then $L^{g}(\sqrt{-p\mu})/K$
is unramified everywhere. We then have that the number of extensions
unramified everywhere is the size of the kernel, $2^{\omega(d)-4}$.
If there are no primes dividing $d$ congruent to $3\mod4$ then $\varphi$
is the trivial map. It follows similarly to the last case that either
every extension $L/K$ corresponding to the factorization $d=d_{1}d_{2}d_{3}$
is unramified at infinity or none of them are, so there are $\beta(d)2^{\omega(d)-3}$
such extensions for some $0\le\beta(d)\le1$ denoting the fraction
of such factorizations. 
\end{proof}
We can replace the condtions of Proposition \ref{prop:The-number-of}
by $\beta\left(d\right)=1$ for all $d$ and $\alpha\left(d\right)=1$
if $d>0$ and $0$ if $d<0$. This is because discriminants not divisible
by some $p\equiv3\mod4$ do not contribute to the main term, which
can be seen by an argument similar to Corollary 1 from \cite{fk1}
which we sketch here.
\begin{lem}
Let
\begin{align*}
\widetilde{f}(d) & =2^{-3-\widetilde{\alpha}(d)}\sum_{d=d_{1}d_{2}d_{3}}\prod_{p\mid d_{3}}\left(1+\Leg{d_{1}d_{2}}{p}\right)\prod_{p\mid d_{2}}\left(1+\Leg{d_{1}d_{3}}{p}\right)\prod_{p\mid d_{1}}\left(1+\Leg{d_{2}d_{3}}{p}\right),\\
\widetilde{\alpha}(d) & =\begin{cases}
1 & d>0\\
0 & d<0.
\end{cases}
\end{align*}
If Theorem \ref{thm:mainthm3} holds for $\widetilde{f}(d)$, then
it also holds for $f(d)$. 
\end{lem}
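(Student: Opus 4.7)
The plan is to isolate the discriminants $d$ on which $f$ and $\widetilde{f}$ disagree and show that their contribution to $S_k^\pm(X,n,m)$ is absorbed by the stated error term. Comparing $\alpha,\beta,\widetilde{\alpha}$ directly shows $f(d) = \widetilde{f}(d)$ whenever $d < 0$, and also whenever $d > 0$ has some prime factor $\equiv 3 \pmod 4$. On the remaining set
\[
\mathcal{B}_X := \{d \in \mathcal{D}_X^+ : p \mid d \Rightarrow p \equiv 1 \pmod 4\},
\]
one has $\widetilde{\alpha}(d) - \alpha(d) = 1$ and $\beta(d) \in [0,1]$, so $f(d) = 2\beta(d)\widetilde{f}(d) \leq 2\widetilde{f}(d)$. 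Hence
\[
|S_k^{f,\pm}(X,n,m) - S_k^{\widetilde{f},\pm}(X,n,m)| \ll_k \sum_{d \in \mathcal{B}_X \cap \mathcal{D}_{X,n,m}^+} (a^{\omega(d)}\widetilde{f}(d))^k,
\]
so the lemma reduces to bounding this bad sum.

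Next I would rerun the Dirichlet-series/Selberg--Delange argument used to prove Theorem \ref{thm:mainthm3} for $\widetilde{f}$, now restricted to $\mathcal{B}_X$. Expanding $\widetilde{f}(d)^k$ and interchanging summation yields a weighted sum over $d \in \mathcal{B}_X$ of products of Legendre symbols, identical to the main argument except that every prime divisor of $d$ is constrained to the residue class $1 \pmod 4$. This is a density-$1/2$ condition, and by Selberg--Delange any main term of shape $X(\log X)^{z-1}$ in the unrestricted analysis becomes one of shape $X(\log X)^{z/2-1}$ after this restriction. With $z=(3a)^k$ arising from the natural weight $(3a)^{k\omega(d)}$, one obtains a bound $\ll_{k,\epsilon} X(\log X)^{(3a)^k/2-1+\epsilon}$ for the bad sum. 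Comparing exponents, the required inequality $(3a)^k/2 - 1 \leq (3a)^k - a^k - 1 + \epsilon$ is equivalent to $2a^k \leq (3a)^k$, i.e.\ $2 \leq 3^k$, which holds for every integer $k \geq 1$. Thus the bad sum is absorbed by the claimed error and the asymptotic for $\widetilde{f}$ transfers to $f$.

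The main obstacle is the second step, where the main-theorem analysis must be rerun with an extra congruence restriction on every prime divisor of $d$. Concretely the Dirichlet series $\sum_d (a^{\omega(d)}\widetilde{f}(d))^k d^{-s}$ is replaced by its restriction to $\mathcal{B}_X$, keeping only Euler factors at primes $p \equiv 1 \pmod 4$; the resulting singularity at $s=1$ has its order cut in half, producing the Selberg--Delange saving of $\tfrac{1}{2}$ in the log-exponent. This is the direct analog of Corollary 1 of \cite{fk1} for Gerth's conjecture, so once the main theorem for $\widetilde{f}$ has been set up the modification here is a routine bookkeeping exercise.
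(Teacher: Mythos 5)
Your first step coincides with the paper's: on $d<0$ and on $d>0$ divisible by some $p\equiv3\bmod4$ one has $f=\widetilde f$, and on the remaining set $\mathcal{B}_X=\{d>0:\ p\mid d\Rightarrow p\equiv1\bmod4\}$ one has $f=2\beta\widetilde f\le2\widetilde f$, so everything reduces to showing $\sum_{d\in\mathcal{B}_X}(a^{\omega(d)}\widetilde f(d))^{k}$ is absorbed by the error term. Where you diverge is in how that bad sum is bounded, and here your route is both heavier and less secure than the paper's. The paper does not rerun any analysis: it applies H\"older's inequality to $\sum_d 1_{\mathcal{B}_X}(d)\,(a^{\omega(d)}f(d))^k$, bounding it by $\left(\#\mathcal{B}_X\right)^{1/b}$ times a power of the full (unrestricted) $k$-th moment, and then uses Landau's theorem $\#\mathcal{B}_X\ll X/\sqrt{\log X}$ to harvest a saving of $(\log X)^{-1/(2b)}$; choosing $b$ appropriately (e.g.\ $b=a^{-k}((3a)^k-1/2)$) makes the resulting exponent at most $(3a)^k-a^k-1+\epsilon$. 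This needs only the hypothesis of the lemma (the moment bound for $\widetilde f$) plus a classical counting result, and is essentially two lines.

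Your second step, by contrast, proposes to redo the entire proof of Theorem \ref{thm:mainthm3} with every prime divisor of $d$ constrained to $1\bmod4$, and to extract the saving from a Selberg--Delange halving of the singularity at $s=1$. The soft spot is that $(a^{\omega(d)}\widetilde f(d))^{k}$ is not a multiplicative function of $d$ --- it is a sum over factorizations of products of Legendre symbols --- so Selberg--Delange does not apply to the Dirichlet series you write down; the main term of shape $\sum_{d}(3a)^{k\omega(d)}$, to which the halving heuristic does apply, only emerges at the very end of the Fouvry--Kl\"uners linked-variables analysis. To make your argument rigorous you would have to re-verify the partition into boxes $\mathbf{A}$, the double-oscillation and large-$\omega$ estimates, and the reduction to unlinked variables, all under the extra congruence restriction. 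That is very likely feasible (restricting the support of the variables only shrinks the sums being bounded in absolute value, and the surviving diagonal term is indeed $\sum_{n\in\mathcal{B}_X}(3a)^{k\omega(n)}\ll X(\log X)^{(3a)^k/2-1}$, after which your exponent comparison $2\le3^k$ is correct), but it is not the ``routine bookkeeping exercise'' you claim, and it re-proves a restricted form of the main theorem in order to establish a lemma whose whole purpose is to feed into that theorem. I would replace the second step with the H\"older--Landau interpolation.
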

\begin{proof}
We have by Landau's Theorem and by H$\ddot{\text{o}}$lder's inequality
with $1/b+1/c=1$ that
\begin{align*}
\sum_{d\in\mathcal{D}_{X,1,4}^{\pm},p\mid d\Rightarrow p\equiv1\text{mod}4}\left(a^{\omega(d)}f(d)\right)^{k} & =\left(\sum_{d\in\mathcal{D}_{X,1,4}^{\pm},p\mid d\Rightarrow p\equiv1\text{mod}4}1\right)^{1/b}\left(\sum_{d\in\mathcal{D}_{X,1,4}^{\pm}}\left(a^{\omega(d)}f(d)\right)^{k}\right)^{1/c}\\
 & \ll\left(\frac{X}{\sqrt{\log X}}\right)^{1/b}\left(X\left(\log X\right)^{(3a)^{k}-1}\right)^{1/c}.
\end{align*}
The result follows by noting that we can find values $b,c>1$ such
that $-\frac{1}{2b}+\frac{(3a)^{k}-1}{c}\le(3a)^{k}-a^{k}-1+\epsilon$,
forcing the only terms where the two sums differ into the error term
(for example, taking $b=a^{-k}((3a)^{k}-1/2)>1$). 
\end{proof}
Thus in the remainder of the paper we will let write $f\left(d\right)$
to denote $\widetilde{f}(d)$. We will proceed with the proof of Theorem
\ref{thm:mainthm3}. Our first step will be to express 
\[
S_{k}^{\pm}=\sum_{d\in\mathcal{D}_{X}^{\pm}}\left(a^{\omega(d)}f(d)\right)^{k}
\]
as a character sum. We note that below the factorizations will not
be assumed to be nontrivial and the sums will be over all permutations
of the $d_{i}$.

As previously mentioned we will first prove the results in the case
of negative discriminants congruent to $1\mod4$. The computations
necessary to treat the remaining cases are handled in the appendix.

For a quadratic field $K$ with fundamental discriminant $d$ define
$f\left(d\right)$ to be the number of unramified $H_{8}$-extensions
of $K$ which are Galois over $\Q$. Define functions

\begin{eqnarray*}
f_{1}\left(d\right) & = & 2^{-3}\frac{1}{2}\sum_{d=-d_{1}d_{2}d_{3}}\prod_{p\mid d_{3}}\left(1+\left(\frac{-d_{1}d_{2}}{p}\right)\right)\prod_{p\mid d_{1}}\left(1+\left(\frac{d_{2}d_{3}}{p}\right)\right)\prod_{p\mid d_{2}}\left(1+\left(\frac{-d_{3}d_{1}}{p}\right)\right)
\end{eqnarray*}
and

\begin{eqnarray*}
f_{2}\left(d\right) & = & 2^{-3}\sum_{d=-d_{1}d_{2}}\prod_{p\mid d_{1}}\left(1+\left(\frac{d_{2}}{p}\right)\right)\prod_{p\mid d_{2}}\left(1+\left(\frac{-d_{1}}{p}\right)\right)
\end{eqnarray*}
where the factoriation $d=-d_{1}d_{2}d_{3}$ is into integers such
that $d_{1}\equiv-1\mod4$ and $d_{2},d_{3}\equiv1\mod4$ and similarly
for $d=d_{1}d_{2}$ is into integers such that $d_{1}\equiv-1\mod4$
and $d_{2}\equiv1\mod4$.

Then we have 
\[
f\left(d\right)=f_{1}\left(d\right)-f_{2}\left(d\right)+2^{\omega(d)-3}.
\]
The congruence conditions above are imposed since the condition from
\cite{lemmermeyerh8} requires that the factorization be into fundamental
discriminants, but factorizations where more than one term is negative
don't contiribute to the count (that is the expression in \cite{lemmermeyerh8}
evaluates to 0). So we can assume $-d_{1}$ is the negative factor
in the factorization of $d$. The expression contains $f_{2}$ and
$2^{\omega(d)-4}$ to account for trivial factorizations (where at
least one of the $d_{i}$ equals 1). Note this implies $f\left(d\right)=0$
when $\omega\left(d\right)\le2$.

Expanding these expressions gives 
\begin{align}
f_{1}\left(d\right) & =\frac{1}{2^{4}}\sum_{d=-d_{1}d_{2}d_{3}}\left(\sum_{a\mid d_{3}}\left(\frac{-d_{1}d_{2}}{a}\right)\right)\left(\sum_{b\mid d_{1}}\left(\frac{-d_{2}d_{3}}{b}\right)\right)\left(\sum_{c\mid d_{2}}\left(\frac{-d_{3}d_{1}}{c}\right)\right)\nonumber \\
 & =\frac{1}{2^{4}}\sum_{d=-d_{1}d_{2}d_{3}}\left(\sum_{D_{4}D_{1}=d_{3}}\left(\frac{-D_{0}D_{3}D_{2}D_{5}}{D_{4}}\right)\right)\left(\sum_{D_{0}D_{3}=d_{1}}\left(\frac{D_{2}D_{5}D_{4}D_{1}}{D_{0}}\right)\right)\nonumber \\
 & \times\left(\sum_{D_{2}D_{5}=d_{2}}\left(\frac{-D_{4}D_{1}D_{0}D_{3}}{D_{2}}\right)\right)\nonumber \\
 & =\frac{1}{2^{4}}\sum_{d=-D_{0}D_{1}D_{2}D_{3}D_{4}D_{5}}\left(\frac{-1}{D_{4}}\right)\left(\frac{-1}{D_{2}}\right)\nonumber \\
 & \times\left(\frac{D_{0}D_{3}D_{2}D_{5}}{D_{4}}\right)\left(\frac{D_{2}D_{5}D_{4}D_{1}}{D_{0}}\right)\left(\frac{D_{4}D_{1}D_{0}D_{3}}{D_{2}}\right)\label{eq:negative1mod4}
\end{align}
and

\begin{align}
f_{2}\left(d\right) & =\frac{1}{2^{3}}\sum_{d=-d_{1}d_{2}}\left(\sum_{b\mid d_{1}}\left(\frac{d_{2}}{b}\right)\right)\left(\sum_{c\mid d_{2}}\left(\frac{-d_{1}}{c}\right)\right)\nonumber \\
 & =\frac{1}{2^{3}}\sum_{d=-d_{1}d_{2}}\left(\sum_{E_{0}E_{1}=d_{1}}\left(\frac{E_{2}E_{3}}{E_{0}}\right)\right)\left(\sum_{E_{2}E_{3}=d_{2}}\left(\frac{-E_{0}E_{1}}{E_{2}}\right)\right)\nonumber \\
 & =\frac{1}{2^{3}}\sum_{d=-E_{0}E_{1}E_{2}E_{3}}\left(\frac{-1}{E_{2}}\right)\left(\frac{E_{2}E_{3}}{E_{0}}\right)\left(\frac{E_{0}E_{1}}{E_{2}}\right).\label{eq:errornegative1mod4}
\end{align}
where the sums are over factorizations which satisfy the congruences
\begin{eqnarray*}
D_{0}D_{3} & \equiv & -1\mod4\\
D_{2}D_{5} & \equiv & 1\mod4\\
D_{4}D_{1} & \equiv & 1\mod4
\end{eqnarray*}
and 
\begin{eqnarray*}
E_{0}E_{1} & \equiv & -1\mod4\\
E_{2}E_{3} & \equiv & 1\mod4.
\end{eqnarray*}
For two indices $u,v$ define the function $\Phi\left(u,v\right)\in\mathbb{F}_{2}$
to be 1 if and only if the symbol $\left(\frac{D_{u}}{D_{v}}\right)$
appears in $\left(\ref{eq:negative1mod4}\right)$ and similarly define
$\lambda\left(u\right)\in\mathbb{F}_{2}$ to be 1 if and only if $\left(\frac{-1}{D_{u}}\right)$
appears. Also define the function $\Psi\left(u,v\right)\in\mathbb{F}_{2}$
to be 1 if and only if the symbol $\left(\frac{E_{u}}{E_{v}}\right)$
appears in $\left(\ref{eq:errornegative1mod4}\right)$ and similarly
define $\gamma\left(u\right)\in\mathbb{F}_{2}$ to be 1 if and only
if $\left(\frac{-1}{E_{u}}\right)$ appears.

Taking the $k$th power of $f\left(d\right)$ gives 
\begin{eqnarray*}
f\left(d\right)^{k} & = & \sum_{j_{1}+j_{2}+j_{3}=k}^{k}\binom{k}{j_{1},j_{2},j_{3}}f_{1}^{j_{1}}\left(d\right)\left(-1\right)^{j_{2}}f_{2}^{j_{2}}\left(d\right)2^{j_{3}(\omega(d)-4)}\\
 & = & \sum_{j_{1}+j_{2}+j_{3}=k}^{k}\binom{k}{j_{1},j_{2},j_{3}}\left(-1\right)^{j_{2}}2^{j_{3}(\omega(d)-4)}f_{j_{1},j_{2}}\left(d\right).
\end{eqnarray*}

We will define some notation to rewrite $f_{j_{1},j_{2}}\left(d\right)$
in a form suitable to application of analytic techniques. In the expression
for $f_{j_{1},j_{2}}\left(d\right)$ we have $j_{1}$ different factorizations
of $d$ into 6 variables and $j_{2}$ factorizations of $d$ into
$4$ variables, with $j_{1}+j_{2}=l$. Write these factorizations
of $d$ as 
\begin{eqnarray*}
d & = & \prod_{u_{1}}D_{u_{1}}^{\left(1\right)}=\cdots=\prod_{u_{j_{1}}}D_{u_{j_{1}}}^{\left(j_{1}\right)}\\
 & = & \prod_{u_{j_{1}+1}}E_{j_{1}+1}^{\left(j_{1}+1\right)}=\cdots=\prod_{u_{l}}E_{u_{l}}^{\left(l\right)}
\end{eqnarray*}
where each index $u_{i}$ runs from $0$ to $5$ for $1\le i\le j_{1}$
and from 0 to 3 for $j_{1}+1\le i\le l$. From this we obtain a further
factorization of each $D_{u_{h}}^{\left(h\right)}$ by 
\[
D_{u_{h}}^{\left(h\right)}=\prod_{1\le i\le l,i\neq h}\prod_{u_{h}}\gcd\left(D_{u_{1}}^{\left(1\right)},\ldots,D_{u_{j_{1}}}^{\left(j_{1}\right)},E_{u_{j_{1}+1}}^{\left(j_{1}+1\right)},\ldots,E_{u_{l}}^{\left(l\right)}\right).
\]
Define 
\[
D_{u_{1},\ldots,u_{j_{2}}}=\gcd\left(D_{u_{1}}^{\left(1\right)},\ldots,D_{u_{j_{1}}}^{\left(j_{1}\right)},E_{j_{1}+1}^{\left(j_{1}+1\right)},\ldots,E_{u_{l}}^{\left(l\right)}\right).
\]

To simplify notation we let $u=\left(u_{1},\ldots,u_{l}\right)$,
$v=\left(v_{1},\ldots,v_{l}\right)$, and define the functions 
\[
\Phi_{j_{1}}\left(u,v\right)=\sum_{i=1}^{j_{1}}\Phi\left(u_{i},v_{i}\right)
\]
and 
\[
\Psi_{j_{2}}\left(u,v\right)=\sum_{i=j_{1}}^{l}\Psi\left(u_{i},v_{i}\right).
\]
Thus with the above notations we can write 
\[
f_{j_{1},j_{2}}\left(d\right)=\frac{1}{2^{4j_{1}+3j_{2}}}\sum_{\left(D_{u}\right)}\prod_{u}\left(\frac{-1}{D_{u}}\right)^{\lambda_{j_{1}}\left(u\right)+\gamma_{j_{2}}\left(u\right)}\prod_{u,v}\left(\frac{D_{u}}{D_{v}}\right)^{\Phi_{j_{1}}\left(u,v\right)+\Psi_{j_{2}}\left(u,v\right)}
\]
where now the sum is over $6^{j_{1}}4^{j_{2}}$ tuples of integers
$\left(D_{u}\right)$ which satisfy $\prod_{u}D_{u}=d$ and the following
congruence conditions:

for all $1\le i\le j_{1}$ and $\left(u_{i},v_{i}\right)\in\left\{ \left(0,3\right),\left(2,5\right),\left(4,1\right)\right\} $

\begin{equation}
\prod_{u}D_{u}\prod_{v}D_{v}\equiv\begin{cases}
-1\mod4 & \mbox{if }\left(u_{i},v_{i}\right)=\left(0,3\right)\\
1\mod4 & \mbox{if }\left(u_{i},v_{i}\right)=\left(2,5\right),\left(4,1\right)
\end{cases}\label{eq:negative1mod4congruence1}
\end{equation}
and for all $j_{1}+1\le i\le l$ and $\left(u_{i},v_{i}\right)\in\left\{ \left(0,1\right),\left(2,3\right)\right\} $
\begin{equation}
\prod_{u}D_{u}\prod_{v}D_{v}\equiv\begin{cases}
-1\mod4 & \mbox{if }\left(u_{i},v_{i}\right)=\left(0,1\right)\\
1\mod4 & \mbox{if }\left(u_{i},v_{i}\right)=\left(2,3\right)
\end{cases}\label{eq:negative1mod4congruence2}
\end{equation}
where the above products are over all $u$ with $u_{i}$ in the $i$th
position and all $v$ with $v_{i}$ in the $i$th position.

Thus multiplying by $a^{k\omega\left(d\right)}$ and summing over
discriminants $d<0$ with $d\equiv1\mod4$ we get 
\begin{eqnarray}
\sum_{d<X}2^{j_{3}\omega(d)}a^{k\omega\left(d\right)}f_{j_{1},j_{2}}\left(d\right) & = & \frac{1}{2^{4j_{1}+3j_{2}}}\sum_{\left(D_{u}\right)}\mu^{2}\left(\prod_{u}D_{u}\right)2^{j_{3}\omega(d)}a^{k\omega\left(d\right)}\label{eq:mainexpression1}\\
 &  & \times\prod_{u}\left(\frac{-1}{D_{u}}\right)^{\lambda_{j_{1}}\left(u\right)+\gamma_{j_{2}}\left(u\right)}\prod_{u,v}\left(\frac{D_{u}}{D_{v}}\right)^{\Phi_{j_{1}}\left(u,v\right)+\Psi_{j_{2}}\left(u,v\right)}\nonumber 
\end{eqnarray}
where the sum is over $6^{j_{1}}4^{j_{2}}$ tuples of integers $\left(D_{u}\right)$
which satisfy $\prod_{u}D_{u}<X$ and the conditions $\left(\ref{eq:negative1mod4congruence1}\right)$
and $\left(\ref{eq:negative1mod4congruence2}\right)$.

\section{\label{sec:Bounding-the-Error}Bounding the Error Term}

So far we have written 
\[
S_{k}\left(X\right)=\sum_{j_{1}+j_{2}+j_{3}=k}^{k}\binom{k}{j_{1},j_{2},j_{3}}\left(-1\right)^{j_{2}}a^{k\omega(d)}2^{j_{3}(\omega(d)-3-\alpha(d))}f_{j_{1},j_{2}}\left(d\right).
\]
Where $\alpha(d)=1$ if $d<0$ or there exists a prime $p\equiv3\mod4$
with $p\mid d$, and $\alpha(d)=0$ otherwise. We will analyse each
term seperately. Let 
\[
S_{j_{1},j_{2},j_{3}}\left(X\right)=\sum_{d<X}a^{k\omega(d)}2^{j_{3}(\omega(d)-3-\alpha(d))}f_{j_{1},j_{2}}\left(d\right).
\]
The right hand side was computed in the previous section and is given
by $\left(\ref{eq:mainexpression1}\right)$.

We want to separate $\left(\ref{eq:mainexpression1}\right)$ into
a main term and error term. The methods of Fouvry-Klüners from \cite{fk1}
apply with minor modifications to accomplish this. We state the key
points of their argument and refer to \cite{fk1} for the proofs.

Fix $k\in\mathbb{Z}_{\ge1}$ and let $\Delta=1+\log^{-(a6)^{k}}X$.
Define $\mathbf{A}$ to be a tuple $\left(A_{i}\right)_{i=0}^{6^{j_{1}}4^{j_{2}}}$
of variables with each $A_{i}$ corresponding to $D_{i}$, and each
$A_{i}=\Delta^{j}$ for some $j\ge0$. We can partition $S_{j_{1},j_{2},j_{3}}\left(X\right)$
according to the various $\mathbf{A}$, by letting $S_{j_{1},j_{2},j_{3}}\left(X,\mathbf{A}\right)$
be the sum $\left(\ref{eq:mainexpression1}\right)$ but now restricted
to tuples $\left(D_{i}\right)$ for which $A_{i}\le D_{i}\le\Delta A_{i}$
and $\prod_{i}D_{i}<X$. Hence 
\[
S_{j_{1},j_{2},j_{3}}\left(X\right)=\sum_{\mathbf{A}}S_{j_{1},j_{2},j_{3}}\left(X,\mathbf{A}\right).
\]

Note that if $\Delta=1+\log^{-(a6)^{k}}X$ then there are $O\left(\left(\log X\right)^{6^{j_{1}}4^{j_{2}}\left(1+(a6)^{k}\right)}\right)$
possible $\mathbf{A}$ with $S_{j_{1},j_{2}}\left(X,\mathbf{A}\right)$
not empty. This is since there are $O\left(\left(\log X\right)^{\left(1+(a6)^{k}\right)}\right)$
choices for each $1<A_{i}\le X$.

Let $\Omega=e(a6)^{k}\left(\log\log X+B_{0}\right).$ Noting $a>1/6$,
we have 
\begin{lem}[Fouvry-Klüners \cite{fk1}]
Let $S$ be the sum of the terms in $\left(\ref{eq:mainexpression1}\right)$
which satisfy: at least one $d_{i}$ has $\omega\left(d_{i}\right)>\Omega$.
Then 
\[
S=O\left(X\left(\log X\right)^{-1}\right).
\]
\begin{lem}[Fouvry-Klüners \cite{fk1}]
Let $\mathcal{F}_{1}$ be the set of $\mathbf{A}$ which satisfies
$\prod_{i}\Delta A_{i}>X$. Then 
\[
\sum_{\mathbf{A}\in\mathcal{F}_{1}}S_{j_{1},j_{2},j_{3}}\left(X,\mathbf{A}\right)=O\left(X\left(\log X\right)^{-1}\right).
\]
\end{lem}
\end{lem}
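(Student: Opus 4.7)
The plan is to follow the original Fouvry-Klüners estimate for the boundary-box contribution \cite{fk1}, adjusted for the additional weight $a^{k\omega(d)}2^{j_{3}\omega(d)}$ that appears in our expression. First, I would discard the characters in $(\ref{eq:mainexpression1})$ using the trivial bound $\left|\Leg{-1}{D}\right|=\left|\Leg{D_u}{D_v}\right|=1$, reducing the task to the weighted counting problem
\[
|S_{j_{1},j_{2},j_{3}}(X,\mathbf{A})|\leq\sum_{(D_{u})}\mu^{2}\Bigl(\prod_{u}D_{u}\Bigr)\,a^{k\omega(d)}2^{j_{3}\omega(d)},
\]
where $(D_{u})$ ranges over tuples with $A_{u}\leq D_{u}\leq\Delta A_{u}$ and $\prod D_{u}<X$. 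By the previous lemma I may further restrict to $\omega(d)\leq\Omega=O(\log\log X)$, since the $\omega(d)>\Omega$ contribution is already $O(X(\log X)^{-1})$; the surviving weight is then $\leq(\log X)^{C}$ for some constant $C$ depending only on $a,k,j_{3}$.

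Next, I would bound the number of tuples in each box by the Lebesgue measure of the intersection $\prod_{u}[A_{u},\Delta A_{u}]\cap\{\prod x_{u}<X\}$, plus a negligible boundary error. The union of these intersections over $\mathbf{A}\in\mathcal{F}_{1}$ is contained in the slab $\log X-n\log\Delta<\sum_{u}\log x_{u}\leq\log X$ with $\log x_{u}\geq0$, where $n=6^{j_{1}}4^{j_{2}}$. A direct computation (switching to logarithmic coordinates $y_{u}=\log x_{u}$) shows this slab has total volume $O\bigl(X(\log X)^{n-1}\log\Delta\bigr)$, since a slab of thickness $n\log\Delta$ on the hyperplane $\sum y_{u}=\log X$ in $\mathbb{R}_{\geq0}^{n}$ has $(n-1)$-dimensional face of measure $O((\log X)^{n-1})$.

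Combining the weight bound with the volume estimate, we get
\[
\sum_{\mathbf{A}\in\mathcal{F}_{1}}|S_{j_{1},j_{2},j_{3}}(X,\mathbf{A})|=O\bigl(X(\log X)^{C+n-1}\log\Delta\bigr)=O\bigl(X(\log X)^{C+n-1-(a6)^{k}}\bigr),
\]
using $\log\Delta\sim\Delta-1=(\log X)^{-(a6)^{k}}$. The exponent $(a6)^{k}$ was built into the definition of $\Delta$ precisely so as to dominate $C+n-1$ by at least $1$, yielding the claimed $O(X(\log X)^{-1})$.

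The main obstacle is the bookkeeping: one must track the accumulated log powers coming from the multinomial expansion of $f(d)^{k}$, from the $a^{k\omega(d)}2^{j_{3}\omega(d)}$ weight restricted to $\omega(d)\leq\Omega$, and from the counting of discretization parameters, and verify that the Fouvry-Klüners choice $\Delta=1+\log^{-(a6)^{k}}X$ uniformly absorbs all of them across $(j_{1},j_{2},j_{3})$ with $j_{1}+j_{2}+j_{3}=k$. Once this uniformity is established, the estimate transfers verbatim from \cite{fk1}.
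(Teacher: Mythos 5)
The paper does not prove these lemmas itself (it defers to \cite{fk1}), but your proposal does not reconstruct a valid argument: there is a genuine gap in the exponent accounting. The fatal step is decoupling the weight from the count. You bound $a^{k\omega(d)}2^{j_{3}\omega(d)}$ pointwise by $(\log X)^{C}$ and separately bound the number of integer tuples by the volume of the slab, $O\bigl(X(\log X)^{n-1}\log\Delta\bigr)$ with $n=6^{j_{1}}4^{j_{2}}$. But the saving supplied by the weight is an \emph{average} saving, not a pointwise one: the weighted tuple count attached to a squarefree $d$ is $\bigl(6^{j_{1}}4^{j_{2}}2^{j_{3}}a^{k}\bigr)^{\omega(d)}\le(6a)^{k\omega(d)}$, whose summatory function grows like $X(\log X)^{(6a)^{k}-1}$, whereas the unweighted count grows like $X(\log X)^{n-1}$. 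For $a<1$ these differ by a large power of $\log X$. Concretely, take $a=1/3$ and $j_{1}=k$: your bound is $X(\log X)^{C+6^{k}-1-2^{k}}$ with $C\ge 0$, and $6^{k}-1-2^{k}>0$ already for $k=1$, so you do not get $O(X(\log X)^{-1})$. Your closing claim that the exponent $(6a)^{k}$ in $\Delta$ "dominates $C+n-1$" is false; it is calibrated to the exponent $(6a)^{k}-1$ of the \emph{weighted} divisor sum, not to $n-1$. Restricting to $\omega(d)\le\Omega$ does not rescue this, since your volume bound never sees that restriction.

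The correct route (the one Fouvry--Kl\"oners take) keeps the weight inside the sum over $d$: every tuple contributing to a box in $\mathcal{F}_{1}$ has $d=\prod_{u}D_{u}$ in the short interval $(X\Delta^{-n},X]$, each $d$ is counted with total weight at most $(6a)^{k\omega(d)}$, and a short-interval mean-value estimate of Shiu type gives
\[
\sum_{X\Delta^{-n}<d\le X}\mu^{2}(d)\,(6a)^{k\omega(d)}\ll X\,n\log\Delta\,(\log X)^{(6a)^{k}-1}=O\bigl(X(\log X)^{-1}\bigr),
\]
which is exactly where the choice $\Delta=1+\log^{-(6a)^{k}}X$ is used. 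A secondary but real defect: for boxes with $A_{u}(\Delta-1)<1$ (e.g.\ $A_{u}=1$) the number of lattice points is not bounded by the Lebesgue measure plus a negligible boundary term, so even your count of tuples needs the short-interval divisor estimate rather than a volume heuristic. Finally, note that the statement contains two lemmas; your proposal addresses only the second and merely invokes the first.
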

For the next two lemmas we will need to define 
\[
X^{\dagger}=\log^{3\left(1+6^{j_{1}}4^{j_{2}}\left(1+(a6)^{k}\right)\right)}X
\]
\[
X^{\ddagger}=\exp\left(\log^{\eta(k)}X\right)
\]
for some small $\eta(k)$ depending only on $k$.

Now let $Y_{1}=\left\{ 0,1,2,3,4,5\right\} $ and let $Y_{2}=\left\{ 0,1,2,3\right\} $.
The indices of the variables lie in $Y=Y_{1}^{j_{1}}\times Y_{2}^{j_{2}}$.We
define two indices $u,v\in Y$ to be linked if $\Phi\left(u,v\right)+\Psi\left(u,v\right)+\Phi\left(v,u\right)+\Psi\left(v,u\right)=1$.
This means that exactly one of the symbols $\left(\frac{D_{u}}{D_{v}}\right)$
and $\left(\frac{D_{v}}{D_{u}}\right)$ appears in $\left(\ref{eq:mainexpression1}\right)$.

Define 
\[
U\left(j_{3}\right)=\begin{cases}
3^{k-j_{3}}+1 & if\ j_{3}>0\\
3^{k} & if\ j_{3}=0
\end{cases}
\]

\begin{lem}[Fouvry-Klüners \cite{fk1}]
\label{lem:-secondfamily} Let $\mathcal{F}_{2}$ be the set of $\mathbf{A}$
satisfiying: at most $U\left(j_{3}\right)-1$ variables are $A_{i}>X^{\ddagger}$.
Then 
\[
\sum_{\mathbf{A}\in\mathcal{F}_{2}}S_{j_{1},j_{2},j_{3}}\left(X,\mathbf{A}\right)\ll X\left(\log X\right)^{2^{j_{3}}a^{k}\left(3^{k-j_{3}}-1+\eta(k)6^{j_{1}}4^{j_{2}}\right)-1}
\]
\begin{lem}[Fouvry-Klüners \cite{fk1}]
Let $\mathcal{F}_{3}$ be the set of $\mathbf{A}$ satisfying: there
exist two linked indices $i$ and $j$ with $A_{i}\ge X^{\ddagger}$
and $A_{j}\ge2$. Then 
\[
\sum_{\mathbf{A}\in\mathcal{F}_{3}}S_{j_{1},j_{2},j_{3}}\left(X,\mathbf{A}\right)=O\left(X\left(\log X\right)^{-1}\right).
\]
\end{lem}
\end{lem}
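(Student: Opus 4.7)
The plan is to adapt the argument from \cite{fk1} that bounds contributions from configurations $\mathbf{A}$ with few large variables. Set $N = 6^{j_1} 4^{j_2}$ and $c = 2^{j_3} a^k$. The first step is to bound every Legendre symbol $\left(\frac{D_u}{D_v}\right)$ and every sign character $\left(\frac{-1}{D_u}\right)$ appearing in $\left(\ref{eq:mainexpression1}\right)$ by $1$, reducing $\sum_{\mathbf{A}\in\mathcal{F}_2} S_{j_1,j_2,j_3}(X,\mathbf{A})$ to a weighted count of coprime squarefree factorizations $d = \prod_u D_u < X$ with weight $c^{\omega(d)}$, subject to the condition that at most $U(j_3) - 1$ of the $D_u$ exceed $X^{\ddagger}$. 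Only $O(1)$ subsets $B \subseteq Y$ of ``big'' positions satisfy $|B| \le U(j_3) - 1$, so I fix one such $B$ with $|B| = n_B$ and bound the corresponding contribution, summing over $B$ at the end.

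For a fixed $B$, I decompose each squarefree $d$ as $d = d_B d_S$ with $d_B = \prod_{i \in B} D_i$ and $d_S = \prod_{i \notin B} D_i$ coprime. Bounding the number of coprime factorizations of $d_B$ into $n_B$ parts trivially by $n_B^{\omega(d_B)}$ and applying Landau's theorem gives
\[
\sum_{\substack{d_B < X/d_S \\ \gcd(d_B, d_S) = 1}} \mu^2(d_B) (c n_B)^{\omega(d_B)} \ll \frac{X}{d_S} (\log X)^{c n_B - 1}.
\]
For the small part, the constraint $D_i \le X^{\ddagger}$ for the $N - n_B$ small positions forces $d_S \le (X^{\ddagger})^{N - n_B}$, so
\[
\sum_{d_S \le (X^{\ddagger})^{N-n_B}} \frac{\mu^2(d_S) (c(N-n_B))^{\omega(d_S)}}{d_S} \ll ((N-n_B) \log^{\eta(k)} X)^{c(N-n_B)} \ll (\log X)^{c \eta(k) N}.
\]
Multiplying the two estimates yields an overall bound of $O(X (\log X)^{c n_B + c \eta(k) N - 1})$. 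In the case $j_3 = 0$ we have $n_B \le U(0) - 1 = 3^k - 1$, and this exponent already matches the target $c(3^k - 1 + \eta(k) N) - 1$.

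The main obstacle is the case $j_3 > 0$, where $U(j_3) - 1 = 3^{k-j_3}$ while the target exponent uses $3^{k-j_3} - 1$, so the naive estimate above is too weak by one factor of $c$. I would recover this saving by expanding the weight $2^{j_3 \omega(d)}$ hidden inside $2^{j_3(\omega(d) - 3 - \alpha(d))}$ as a sum over $j_3$ auxiliary binary coprime factorizations $d = u_i v_i$ with $\gcd(u_i, v_i) = 1$. Under the hypothesis $|B| = U(j_3) - 1$, a pigeonhole argument forces at least one of these $j_3$ auxiliary factorizations to place its large side on top of an index already present in $B$, collapsing a would-be independent large variable and reducing the effective value of $n_B$ by one in the exponent. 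This is exactly the Fouvry-Kl\"uners combinatorial bookkeeping in \cite{fk1}, and implementing it carefully yields the stated bound for all values of $j_3$.
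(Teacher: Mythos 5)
First, a point of reference: the paper does not prove this lemma at all --- it is imported from Fouvry--Kl\"uners with the remark that their methods ``apply with minor modifications,'' and the reader is referred to \cite{fk1} for the proofs. So your proposal has to stand on its own. Your treatment of $\mathcal{F}_2$ follows the standard Fouvry--Kl\"uners template (bound every character trivially, split the tuple into big and small positions, apply Landau's theorem to the big part and a Mertens-type bound to the small part), and in the case $j_3=0$ it is complete and yields exactly the stated exponent $a^{k}\left(3^{k}-1+\eta(k)6^{j_1}4^{j_2}\right)-1$. This is the only case where the lemma is needed at full strength, since it is what produces the final error exponent $(3a)^{k}-a^{k}-1+\epsilon$.

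For $j_3>0$ there is a genuine gap. As you note, the trivial count with $n_B\le U(j_3)-1=3^{k-j_3}$ large positions gives exponent $2^{j_3}a^{k}3^{k-j_3}+2^{j_3}a^{k}\eta(k)6^{j_1}4^{j_2}-1$, which exceeds the stated one by $2^{j_3}a^{k}$, and your proposed repair does not close this. Expanding $2^{j_3\omega(d)}$ into $j_3$ auxiliary coprime factorizations $d=u_iv_i$ introduces variables that carry no characters, no size restrictions, and no linking to the $D_u$; the primes of $d$ may be distributed among the $u_i,v_i$ completely independently of which $D_u$ they divide, so there is nothing for a pigeonhole argument to act on and no large variable ``collapses'' --- the weight per prime remains $2^{j_3}a^{k}$ regardless. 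Either the stated exponent for $j_3>0$ requires an argument you have not supplied, or (more likely) one should simply accept the weaker bound, which in fact suffices for every downstream use in the paper: for $j_3\ge1$ one has $2^{j_3}3^{k-j_3}\le 3^{k}-1$, so $X(\log X)^{2^{j_3}a^{k}3^{k-j_3}-1+\epsilon}$ is still $O\left(X(\log X)^{(3a)^{k}-a^{k}-1+\epsilon}\right)$. You should say this explicitly rather than gesture at a combinatorial fix. Separately, the statement as printed also contains the nested $\mathcal{F}_3$ lemma, which you do not address; that estimate cannot be obtained by bounding the characters trivially --- it is precisely where the linking condition and the double-oscillation character-sum bounds (together with a Siegel--Walfisz input for the range $2\le A_j\le X^{\dagger}$) enter, and it is a genuinely different argument from the one you outline.
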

Consider the set of $\mathbf{A}$ satisfying 
\begin{equation}
\mathbf{A}\mbox{ is not in \ensuremath{\mathcal{F}_{i}} for any \ensuremath{i=0,1,2,3}.}\label{eq:familycondition}
\end{equation}
Combining the above lemmas we reduce our expression to 
\[
S_{j_{1},j_{2},j_{3}}\left(X\right)=\sideset{}{^{'}}\sum_{\mathbf{A}}S_{j_{1},j_{2},j_{3}}\left(X,\mathbf{A}\right)+O\left(X\left(\log X\right)^{2^{j_{3}}a^{k}3^{k-j_{3}}-2^{j_{3}}a^{k}-1+\epsilon}\right)
\]
where the sum is over $\mathbf{A}$ satisfying $\left(\ref{eq:familycondition}\right)$,
after taking $\eta(k)=\epsilon6^{-j_{1}}4^{-j_{2}}2^{-j_{3}}a^{-k}$.

Note this condition implies that there are at least $U\left(j_{3}\right)$
variables $A_{i}>X^{\ddagger}$ and they are all unlinked. In the
next section we will show that a maximal unlinked set in $Y$ is exactly
of size $3^{j_{1}}2^{j_{2}}$ and this is strictly less than $U\left(j_{3}\right)$
unless $j_{1}=k$.

\section{Maximal unlinked sets}

Consider the set of indices $Y=Y_{1}^{j_{1}}\times Y_{2}^{j_{2}}$.
As in the previous section, we call two indices $u,v$ linked if $\Phi_{j_{1}}(u,v)+\Psi_{j_{2}}(u,v)+\Phi_{j_{1}}(v,u)+\Psi_{j_{2}}(v,u)=1$
and unlinked otherwise. When $j_{1}=0$ the maximal unlinked subsets
of $Y$ are determined in \cite{fk1} and are of size $2^{j_{2}}$.
We will now determine the largest maximal unlinked sets when $j_{2}=0.$ 
\begin{prop}
\label{thm:maxunlinkedsets}Let $A=\left\{ 1,3,5\right\} $ and $B=\left\{ 0,2,4\right\} $.
Let $S=\left\{ A,B\right\} ^{k}$. The largest maximal unlinked sets
are all of size $3^{k}$ and correspond bijectively to elements of
$S$. The set corresponding to $s\in S$ is 
\[
U_{s}=\left\{ u\in Y^{k}\mid u_{i}\in s_{i}\right\} .
\]
\end{prop}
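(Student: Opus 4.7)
The plan is to identify $Y_{1}=\{0,1,2,3,4,5\}$ with $\mathbb{F}_{2}\times\mathbb{Z}/3$ so as to make the linking structure transparent, and then induct on $k$. I send the matching pairs $\{0,3\}, \{2,5\}, \{4,1\}$ to the three fibers of the projection onto $\mathbb{Z}/3$ and the unlinked triangles $B=\{0,2,4\}$ and $A=\{1,3,5\}$ to the two fibers of the projection onto $\mathbb{F}_{2}$. A direct check using the definition of $\Phi$ then shows that two elements $(x,i),(x',i')\in Y_{1}$ are linked precisely when $x\neq x'$ \emph{and} $i\neq i'$, so the linked graph on $Y_{1}$ is the $6$-cycle and its complement is the triangular prism, whose only maximum cliques are $A$ and $B$. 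Extending to $Y_{1}^{k}\cong\mathbb{F}_{2}^{k}\times(\mathbb{Z}/3)^{k}$, two points $u=(\vec{x},\vec{i})$ and $v=(\vec{x}',\vec{i}')$ are linked if and only if
\[
\sum_{j=1}^{k}[x_{j}\neq x'_{j}]\,[i_{j}\neq i'_{j}]\equiv 1\pmod 2.
\]

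Under the bijection $\{A,B\}^{k}\leftrightarrow\mathbb{F}_{2}^{k}$ implicit above, the set $U_{s}$ of the proposition is $\{\vec{s}\}\times(\mathbb{Z}/3)^{k}$. All of its elements share the same $\mathbb{F}_{2}^{k}$-coordinate, so every bracket $[x_{j}\neq x'_{j}]$ vanishes and $U_{s}$ is unlinked of size $3^{k}$. Maximality is verified constructively: given any $u=(\vec{x},\vec{i})$ with $\vec{x}\neq\vec{s}$, pick a coordinate $j_{0}$ where $\vec{x}$ and $\vec{s}$ disagree, and build $v=(\vec{s},\vec{i}\,')\in U_{s}$ by setting $i'_{j_{0}}\neq i_{j_{0}}$ and $i'_{j}=i_{j}$ at every other $j$ with $x_{j}\neq s_{j}$, so only coordinate $j_{0}$ contributes to the parity sum and $u$ is linked to $v$.

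For the converse, I induct on $k$ to show every unlinked $U$ with $|U|\geq 3^{k}$ equals some $U_{s}$; the base case $k=1$ is handled by the triangular-prism classification above. For the inductive step, write $T_{\vec{x}}=\{\vec{i}:(\vec{x},\vec{i})\in U\}$ and $\pi(U)=\{\vec{x}:T_{\vec{x}}\neq\emptyset\}$, so $|U|=\sum_{\vec{x}}|T_{\vec{x}}|$. If some $T_{\vec{x}^{0}}=(\mathbb{Z}/3)^{k}$ then $U_{\vec{x}^{0}}\subseteq U$ and the maximality argument of the previous paragraph forces $U=U_{\vec{x}^{0}}$. Otherwise, for any distinct $\vec{x},\vec{x}'\in\pi(U)$ with $S_{a}:=\mathrm{supp}(\vec{x}\oplus\vec{x}')$, unlinkedness forces $|\{m\in S_{a}:i_{m}\neq i'_{m}\}|$ to be even for every pair $(\vec{i},\vec{i}')\in T_{\vec{x}}\times T_{\vec{x}'}$; fixing any $\vec{i}^{*}\in T_{\vec{x}'}$ confines $T_{\vec{x}}$ to an even-parity slab of size $\frac{3^{|S_{a}|}+(-1)^{|S_{a}|}}{2}\cdot 3^{k-|S_{a}|}<3^{k}$.

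The main obstacle is to upgrade these per-pair parity bounds into the aggregate inequality $\sum_{\vec{x}}|T_{\vec{x}}|<3^{k}$ across all of $\pi(U)$ in this second case. I plan to carry out a secondary induction on $|\pi(U)|$: introducing any further $\vec{x}''\in\pi(U)$ intersects each $T_{\vec{x}}$ with another even-parity slab and rapidly shrinks it, and in the tightest subcase $|S_{a}|=1$ the pointwise bound sharpens to $|T_{\vec{x}}|+|T_{\vec{x}'}|\leq 2\cdot 3^{k-1}$, while for larger $|S_{a}|$ the biclique structure of the ``even-Hamming-distance'' graph on $(\mathbb{Z}/3)^{|S_{a}|}$ yields analogous cylindrical bounds that remain strictly below $3^{|S_{a}|}$. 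Tracking the equality case throughout yields the bijection with $\{A,B\}^{k}$ asserted in the proposition.
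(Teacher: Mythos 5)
Your reformulation of the link relation on $Y_{1}^{k}$ as the parity condition $\sum_{j}[x_{j}\neq x_{j}'][i_{j}\neq i_{j}']\equiv 1\pmod 2$ on $\mathbb{F}_{2}^{k}\times(\mathbb{Z}/3)^{k}$ is correct --- it is exactly the paper's additivity $B_{k}(u,v)=\sum_{j}B_{1}(u_{j},v_{j})$ together with the identification $G_{1}\cong K_{2}\times K_{3}\cong C_{6}$ --- and your forward direction (each $U_{s}$ is unlinked of size $3^{k}$ and maximal, via the explicit construction of a linked partner for any $u$ with $\vec{x}\neq\vec{s}$) is complete. The gap is in the converse. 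When no fiber $T_{\vec{x}}$ is full, the only quantitative bound you actually establish is the single-fiber bound $|T_{\vec{x}}|\le\frac{3^{|S_{a}|}+(-1)^{|S_{a}|}}{2}\cdot 3^{k-|S_{a}|}$, and this is not strong enough to give the aggregate inequality $\sum_{\vec{x}}|T_{\vec{x}}|<3^{k}$: already for two fibers with $|S_{a}|=2$ it yields only $|U|\le 10\cdot 3^{k-2}$, which exceeds $3^{k}=9\cdot 3^{k-2}$. What is needed is a bound on the \emph{sum} of fiber sizes, i.e.\ a biclique-type statement for pairs of sets $A,B\subseteq(\mathbb{Z}/3)^{n}$ all of whose cross-pairs lie at even Hamming distance (something like $|A|+|B|\le 2\cdot 3^{n-1}$), together with a mechanism for combining these bounds over three or more fibers whose pairwise supports $S_{a}$ differ. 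You assert that such ``cylindrical bounds'' exist and that a secondary induction on $|\pi(U)|$ will close the argument, but none of this is carried out, and it is precisely the hard content of the proposition: the paper devotes an entire lemma (the bounds on $c_{i-1}+c_{i+1}$, proved via induced complete bipartite subgraphs of $G_{k-1}$ and the inductive classification of maximum independent sets) plus a six-way case analysis on the set of empty slices to establish exactly this aggregate inequality and its equality case. The equality analysis (``tracking the equality case throughout'') is likewise only asserted, not performed.

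In short, the setup and the easy half are correct and arguably cleaner than the paper's, but the proposal stops where the real work begins. Note also that your slicing is along the $\mathbb{F}_{2}^{k}$-coordinate (by type pattern), whereas the paper slices along the last $Y_{1}$-coordinate into six parts $C_{0},\dots,C_{5}$ and inducts on $k$; if you want to complete your route you must prove the cross-parity biclique bound on $(\mathbb{Z}/3)^{n}$ and show how the constraints from all pairs in $\pi(U)$ interact, which is the analogue of what the paper's lemma and case analysis accomplish.
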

\begin{proof}
Define a graph $G_{k}$ with vertices $\{0,1,2,3,4,5\}^{k}$ and adjacency
matrix given by $[G_{k}]\equiv[B_{k}(u,v)]\mod2$, where we define
$B_{k}(u,v)=\Phi_{k}(u,v)+\Phi_{k}(v,u)$. Unlinked sets are exactly
the independent sets of $G_{k}$. Notice for $k=1$ that $G_{1}$
is a cyclic graph with $6$ vertices, and has largest maximal independent
sets given by $A$ and $B$. We use this as a base case for induction.

Suppose the theorem holds true for $k-1$, and let $U\subset G_{k}$
be independent, and partition it into $U=\coprod_{i=0}^{5}C_{i}$
where $C_{i}=\{(u,i)\in U:u\in G_{k-1}\}$. Call $c_{i}=|C_{i}|$,
so that we have $|U|=\sum c_{i}$. We know that 
\[
[G_{k}]\equiv[B_{k}(u,v)]=[B_{k-1}((u_{2},...,u_{k}),(v_{2},...,v_{k}))]+[B_{1}(u_{1},v_{1})]
\]
The subgraph induced by $U$ inside of $G_{k}$ corresponds to a submatrix
$[U]=0$ in $[G_{k}]$ along the vertices of $U$. In particular,
\[
[B_{k-1}((u_{2},...,u_{k}),(v_{2},...,v_{k}))]=[B_{1}(i,j)]
\]
for all $u\in C_{i}$, $v\in C_{j}$. Ordering indices lexicogrphically,
with the $k^{th}$ entire weighted highest, we then have 
\[
[B_{k-1}|_{U}]=\begin{pmatrix}0_{c_{0}\times c_{0}} & 1_{c_{0}\times c_{1}} & 0_{c_{0}\times c_{2}} & 0_{c_{0}\times c_{3}} & 0_{c_{0}\times c_{4}} & 1_{c_{0}\times c_{5}}\\
1_{c_{1}\times c_{0}} & 0_{c_{1}\times c_{1}} & 1_{c_{1}\times c_{2}} & 0_{c_{1}\times c_{3}} & 0_{c_{1}\times c_{4}} & 0_{c_{1}\times c_{5}}\\
0_{c_{2}\times c_{0}} & 1_{c_{2}\times c_{1}} & 0_{c_{2}\times c_{2}} & 1_{c_{2}\times c_{3}} & 0_{c_{2}\times c_{4}} & 0_{c_{2}\times c_{5}}\\
0_{c_{3}\times c_{0}} & 0_{c_{3}\times c_{1}} & 1_{c_{3}\times c_{2}} & 0_{c_{3}\times c_{3}} & 1_{c_{3}\times c_{4}} & 0_{c_{3}\times c_{5}}\\
0_{c_{4}\times c_{0}} & 1_{c_{4}\times c_{1}} & 0_{c_{4}\times c_{2}} & 1_{c_{4}\times c_{3}} & 0_{c_{0}\times c_{4}} & 1_{c_{4}\times c_{5}}\\
1_{c_{5}\times c_{0}} & 0_{c_{5}\times c_{1}} & 0_{c_{5}\times c_{2}} & 0_{c_{5}\times c_{3}} & 1_{c_{5}\times c_{4}} & 0_{c_{5}\times c_{5}}
\end{pmatrix}
\]
Where $0_{n\times m}$ and $1_{n\times m}$ are block matrices of
dimension $n\times m$ with all $0$ and $1$ entries respectively. 
\begin{lem}
\begin{equation}
c_{i-1}+c_{i+1}\le\begin{cases}
2\cdot3^{k-1} & c_{i}=0,c_{i+2}=c_{i+4}=0\\
3^{k-1} & c_{i}=0;else\\
2(3^{k-1}-1) & c_{i}\ne0,c_{i+2}=c_{i+4}=0\\
3^{k-1}-1 & c_{i}\ne0;else
\end{cases}\label{eq:twoabounds}
\end{equation}
\end{lem}
\begin{proof}
Consider a complete bipartite induces subgraph $K_{V,W}\subset G_{k-1}$.
By the inductive hypothesis $|V|\le3^{k-1}$ with equality if and
only if $V=U_{t}$ for some $t\in S$, and similarly for $W$. Suppose
$V\ne\emptyset$, then for any $u\in V$ of $t\in S$ we have that
there exists a $v\in U_{s}$ for any $s\in S$ defined 
\[
u_{j}=\begin{cases}
v_{j+2m} & t_{j}=s_{j}\\
v_{j+3} & t_{j}\ne s_{j}
\end{cases}
\]
Then $B_{k-1}(u,v)=0$, so $v\not\in W$. Thus $W\ne U_{s}$ and so
$|W|\le3^{k-1}-1$. By symmetry the same is true for $V$ if $W\ne\emptyset$.

Define $p:G_{k}\rightarrow G_{k-1}$ to be the projection forgetting
the $k^{th}$ coordinate. Notice that $p|_{C_{i}}$ is injective for
all $i$ values.

Suppose $c_{i}=0,c_{i+2}=c_{i+4}=0$. Then we use the trivial bound:
the submatrix on vertices in $C_{j}$ is a block zero matrix, so that
$p(C_{j})$ is an independent set of $G_{k-1}$. Thus, $c_{j}\le3^{k-1}$,
so $c_{i-1}+c_{i+1}\le2\cdot3^{k-1}$ by the inductive hypothesis.

Suppose $c_{i}=0$ and without loss of generality $c_{i+2}\ne0$.
Then for $(u,i-1)\in C_{i-1}$ and $(v,i+1)\in C_{i+1}$, choose some
$(w,i+2)\in C_{i+2}$. We have $B_{k-1}(u,w)=B(i-1,i+2)=0$ and $B_{k-1}(v,w)=B(i+1,i+2)=1$,
implying $u\ne v$. Then we have $p(C_{i-1})\cap p(C_{i+1})=\emptyset$
and $p(C_{i-1})\cup p(C_{i+1})$ is an independent set of $G_{k-1}$,
by $B_{k-1}(u,v)=B(i-1,i+1)=0$. Thus $c_{i-1}+c_{i+1}\le3^{k-1}$
by the inductive hypothesis.

Suppose $c_{i}\ne0,c_{i+2}=c_{i+4}=0$. Then $p(C_{i+1})\cup p(C_{i})$
is a complete bipartite induced subgraph of $G_{k-1}$, $K_{V,W}$
for $|V|=c_{i+1}$ and $|W|=c_{i}$. Similarly for $c_{i-1},c_{i}$.
Then $c_{i-1},c_{i+1}\le3^{k-1}-1$ and the result follows.

Suppose $c_{i}\ne0$ and without loss of generality $c_{i+2}\ne0$.
We can similarly prove $p(C_{i-1})\cap p(C_{i+1})=\emptyset$ and
$p(C_{i-1})\cup p(C_{i+1})\cup p(C_{i})$ is an induce bipartite subgraph
of $G_{k-1}$, $K_{V,W}$ with $V=p(C_{i-1})\cup p(C_{i+1})$ and
$W=p(C_{i})$. So a combination of the previous two results gives
us this case. 
\end{proof}
Let $I=\{i:c_{i}=0\}$ we will separate cases based on the size of
$I$.

If $|I|\ge4$ then we have 
\begin{align*}
2|U| & =\sum_{i}c_{i-1}+c_{i+1}=\sum_{i\not\in I}2c_{i}\\
 & \le2(6-|I|)3^{k-1}\\
 & \le4\cdot3^{k-1}\\
 & <2\cdot3^{k}
\end{align*}
And is not maximum.

If $|I|=3$ we must separate into two cases. If $I=\{0,2,4\}$ or
$\{1,3,5\}$ then $U\le3^{k}$ with equality iff $p(C_{j})$ is of
maximum size for an indeendent set in $G_{k-1}$ for all $j\not\in I$.
But maximum implies it equals some $U_{s}$, and $I=\{0,2,4\}$ or
$\{1,3,5\}$ implies we can extend the type for $k-1$ to a type for
$k$ with $U=U_{s}$.

Otherwise, by symmetry we can assume $I\cap\{0,2,4\}=\{0,2\}$. Let
$j$ be the third element of $I$. Then we have by the above lemma
$c_{1}+c_{3},c_{1}+c_{5}\le3^{k-1}$. We also have $c_{0}+c_{2}=0$
and $c_{0}+c_{4}=c_{2}+c_{4}=c_{4}\le3^{k-1}-1$ by at least one of
$c_{5},c_{3}$ nonzero. Lastly $c_{3}+c_{5}\le2\cdot3^{k-1}$. Thus
we have 
\begin{align*}
2|U| & =\sum_{i}c_{i-1}+c_{i+1}\\
 & \le4\cdot3^{k-1}+2(3^{k-1}-1)\\
 & \le2\cdot3^{k}-2\\
 & <2\cdot3^{k}
\end{align*}
And is then not maximum.

If $|I|=2$ we need two cases. First, if $I\subset\{0,2,4\}$ or $\{1,3,5\}$.
Then there exists a $j$ such that $I=\{j-1,j+1\}$. Without loss
of generality suppose $j=1$. Then we have $c_{0}+c_{2}=0$ and $c_{2}+c_{4}=c_{0}+c_{4}\le3^{k-1}-1$.
We also have $c_{1}+c_{3},c_{1}+c_{5}\le3^{k-1}$ and $c_{3}+c_{5}\le2\cdot(3^{k-1}-1)$.
Thus we have 
\begin{align*}
2|U| & =\sum_{i}c_{i-1}+c_{i+1}\\
 & \le2\cdot(3^{k-1}-1)+2\cdot3^{k-1}+2(3^{k-1}-1)\\
 & \le2\cdot3^{k}-4\\
 & <2\cdot3^{k}
\end{align*}
And is not maximum.

Otherwise, $I\ne\{j-1,j+1\}$. Then we have $c_{j-1}+c_{j+1}<3^{k-1}$
if $j\in I$ and $3^{k-1}-1$ is $j\not\in I$. Thus we have 
\begin{align*}
2|U| & =\sum_{i}c_{i-1}+c_{i+1}\\
 & \le2\cdot3^{k-1}+4(3^{k-1}-1)\\
 & \le2\cdot3^{k}-4\\
 & <2\cdot3^{k}
\end{align*}
And is not maximum.

If $|I|=1$ suppose without loss of generality that $0\in I$. Then
$c_{0}+c_{2},c_{0}+c_{4},c_{1}+c_{5}\le3^{k-1}$ and $c_{1}+c_{3},c_{2}+c_{4},c_{3}+c_{5}\le3^{k-1}-1$.
Thus we have 
\begin{align*}
2|U| & =\sum_{i}c_{i-1}+c_{i+1}\\
 & \le3^{k}+3(3^{k-1}-1)\\
 & \le2\cdot3^{k}-3\\
 & <2\cdot3^{k}
\end{align*}
And is not maximum.

If $|I|=0$ then $c_{i-1}+c_{i+1}\le3^{k-1}-1$. Thus we have 
\begin{align*}
2|U| & =\sum_{i}c_{i-1}+c_{i+1}\\
 & \le6(3^{k-1}-1)\\
 & \le2\cdot3^{k}-6\\
 & <2\cdot3^{k}
\end{align*}
And is not maximum. 
\end{proof}
To simplify notation we will refer to the largest maximal unlinked
set corresponding to $s\in S$ as being of type $s$ or simply as
a type.

We now combine these results to determine the largest maximal unlinked
sets for all $j_{1},j_{2}>0$. 
\begin{prop}
The largest maximal unlinked sets in $Y$ are of the form $V\times W$
where $V$ is a type in $Y_{1}^{j_{1}}$ and $W$ is a maximal unlinked
set in $Y_{2}^{j_{2}}$. Thus the largest maximal unlinked sets of
$Y$ are of size $3^{j_{1}}2^{j_{2}}$.\label{thm:trivialunlinked} 
\end{prop}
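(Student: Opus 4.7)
The starting point is the observation that the linking form on $Y = Y_1^{j_1} \times Y_2^{j_2}$ splits additively: for $u = (u',u'')$ and $v = (v',v'')$,
\[
\Phi_{j_1}(u,v)+\Psi_{j_2}(u,v)+\Phi_{j_1}(v,u)+\Psi_{j_2}(v,u) = B^{(1)}(u',v') + B^{(2)}(u'',v'') \pmod 2,
\]
where $B^{(1)}$ and $B^{(2)}$ denote the linking forms on $Y_1^{j_1}$ and $Y_2^{j_2}$, whose maximum unlinked sets are described by Proposition \ref{thm:maxunlinkedsets} and by Fouvry--Kl\"uners \cite{fk1} respectively.

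The achievability direction is immediate from this factorization. If $V \subseteq Y_1^{j_1}$ is a type from Proposition \ref{thm:maxunlinkedsets} and $W \subseteq Y_2^{j_2}$ is a maximum unlinked set from \cite{fk1}, then $B^{(1)}$ vanishes on $V\times V$ and $B^{(2)}$ vanishes on $W\times W$, so their sum vanishes on $(V \times W)^2$; the product $V\times W$ is thus unlinked of size $3^{j_1} 2^{j_2}$, and maximality in $Y$ follows from maximality in each factor.

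For the upper bound I would proceed by induction on $j_2$, with $j_2 = 0$ being Proposition \ref{thm:maxunlinkedsets}. Writing $Y = Y' \times Y_2$ with $Y' = Y_1^{j_1} \times Y_2^{j_2-1}$ and partitioning a maximum unlinked $U \subseteq Y$ into fibers $C_i = \{u' \in Y' : (u',i)\in U\}$ for $i \in \{0,1,2,3\}$, the additive splitting yields three facts: each $C_i$ is unlinked in $Y'$, so $|C_i| \le 3^{j_1} 2^{j_2-1}$ by induction; the union $C_i \cup C_j$ is unlinked in $Y'$ whenever $B_2(i,j) = 0$; and every cross-pair in $C_i \times C_j$ is linked in $Y'$ whenever $B_2(i,j) = 1$. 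A direct computation from \eqref{eq:errornegative1mod4} shows that $G_2$ is the matching on $\{0,1,2,3\}$ with edges $\{0,3\}$ and $\{1,2\}$, so a case analysis on the support $I = \{i : C_i \neq \emptyset\}$ modeled on (but much shorter than) the proof of Proposition \ref{thm:maxunlinkedsets} bounds $|U| = \sum_i |C_i|$ by $3^{j_1} 2^{j_2}$.

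The main obstacle is the fully cross-linked case, in which two fibers $C_i, C_j$ with $B_2(i,j)=1$ are simultaneously near-maximal. Writing $C_i = V_i \times W_i'$ and $C_j = V_j \times W_j'$ by the inductive hypothesis, the constraint $B^{(1)}(v_i', v_j') + B^{(2)}(w_i'', w_j'') \equiv 1$ for all $v_i' \in V_i$, $v_j' \in V_j$, $w_i'' \in W_i'$, $w_j'' \in W_j'$ must be \emph{independent} of the choices; exploiting the explicit description of types $A = \{1,3,5\}$, $B = \{0,2,4\}$ in each coordinate of $Y_1^{j_1}$ (from Proposition \ref{thm:maxunlinkedsets}), this rigidity forces $V_i = V_j$ and compels $W_i'$, $W_j'$ to assemble through the last $Y_2$-coordinate into a maximum unlinked set of $Y_2^{j_2}$ of the type classified in \cite{fk1}. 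Once this combinatorial rigidity is established, the equality case of the inductive bound delivers $U = V \times W$ as claimed.
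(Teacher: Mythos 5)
Your overall route is the same as the paper's: induct on $j_2$, use the additive splitting of the linking form to slice a maximum unlinked set $U$ into fibers $C_i$ over the last $Y_2$-coordinate, and do a case analysis on which fibers are nonempty. The achievability direction is fine, and your reading of the matching on $\{0,1,2,3\}$ from \eqref{eq:errornegative1mod4} is correct (the linked pairs are $\{0,3\}$ and $\{1,2\}$; which perfect matching it is is immaterial to the counts). The gap is in your last paragraph: you have located the delicate point in the wrong case and drawn the wrong conclusion from it. If $B_2(i,j)=1$ and $c_i=3^{j_1}2^{j_2-1}$, the inductive hypothesis gives $p(C_i)=V\times W$ with $V$ a type, and such a set has the property that every vertex of $Y_1^{j_1}\times Y_2^{j_2-1}$ outside it has both a linked and an unlinked partner inside it; hence no nonempty $p(C_j)$ can be completely cross-linked to $p(C_i)$. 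The ``fully cross-linked, both near-maximal'' configuration is therefore impossible and contributes strictly less than the maximum --- it does not ``force $V_i=V_j$'' and assemble into anything. (Your own rigidity argument, pushed through, shows the same: once $V_i=V_j$ you may take $v_i'=v_j'$, killing the $B^{(1)}$ term, and you would then need two completely cross-linked maximal unlinked sets in $Y_2^{j_2-1}$, which do not exist for the same reason.)

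The classification $U=V\times W$ --- which you cannot drop, since the induction itself consumes it in the linked-fiber case just described --- actually comes out of the \emph{unlinked} fiber pairs, and your sketch is missing the two ingredients needed there. First, to convert ``$p(C_i)\cup p(C_j)$ is unlinked'' into ``$c_i+c_j\le 3^{j_1}2^{j_2-1}$'' you need $p$ to be injective on $C_i\cup C_j$; this holds only when some third fiber $C_k$ is nonempty (then $k$ is linked to exactly one of $i,j$ by pigeonhole, which separates $p(C_i)$ from $p(C_j)$). When only two fibers are nonempty this fails, and at equality one instead gets $p(C_i)=p(C_j)=V\times W$ by maximality, whence $U=V\times(W\times\{i,j\})$. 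Second, with three or more nonempty fibers one has to play the two different partitions of the support into unlinked pairs against each other and use the pairwise disjointness of types (as in Proposition \ref{thm:maxunlinkedsets}) to force all the resulting $V$-components to coincide, which is what yields $p(U)=V\times W$. As written, your argument can at best deliver the size bound $3^{j_1}2^{j_2}$, not the structure of the extremal sets.
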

\begin{proof}
We fix $j_{1}>0$ and prove this by induction on $j_{2}$. Let $G_{j_{1},j_{2}}=Y_{1}^{j_{1}}\times Y_{2}^{j_{2}}$.
The base case $j_{2}=0$ is Theorem \ref{thm:maxunlinkedsets}.

Let $U\subset G_{j_{1},j_{2}}$ be a largest maximal unlinked set.
Now let $C_{i}=\left\{ \left(u,i\right)\in U\mid u\in G_{j_{2}-1}\right\} $
and $c_{i}=|C_{i}|$ for $i\in Y_{2}$, as above. Let $p:G_{j_{1},j_{2}}\longrightarrow G_{j_{1},j_{2}-1}$
be the projection dropping the last coordinate. Suppose $i$ and $j$
are unlinked, and $c_{i},c_{j},c_{k}\ne0$ for $i,j,k$ all distinct.
Let $(u,i)\in C_{i}$, $(v,j)\in C_{j}$, and $(w,k)\in C_{k}$. $k$
is linked to exactly one of $i,j$ by the pigeonhole principle, as
$i,j,k\in\{0,1,2,3\}$ and the linked pairs are $\{0,1\}$ and $\{2,3\}$,
so without loss of generality say $k$ and $i$ are linked. Then it
follows that $B_{j_{1},j_{2}-1}(u,w)=B_{0,1}(i,k)=1$ and $B_{j_{1},j_{2}-1}(v,w)=B_{0,1}(j,k)=0$.
So we must have $u\ne v$, which implies $p$ is injective on $C_{i}\cup C_{j}$.

We consider several cases:

\textbf{Case 1: }Suppose only one $c_{i}$ is nonzero. Note $p\left(C_{i}\right)$
is unlinked for any $i$. Hence $c_{i}\le3^{j_{1}}2^{j_{2}-1}<3^{j_{1}}2^{j_{2}}$
by the inductive hypothesis.

\textbf{Case 2: }Suppose exactly two $c_{i}$ and $c_{j}$ are non-zero.
Suppose $i$ and $j$ are linked. If $c_{i}=3^{j_{1}}2^{j_{2}-1}$
then by the induction hypothesis $p\left(C_{i}\right)$ is a maximal
unlinked set in $G_{j_{1},j_{2}-1}$ of the form $V\times W$ with
$V$ a type. This implies that for every $u\in G_{j_{1},j_{2}-1}-p(C_{i})$
the set $p\left(C_{i}\right)$ contains both an element which is linked
with $u$ and an element which is unlinked with $u$ (see the construction
in the previous proof). But every element in $p\left(C_{j}\right)$
is linked with every element in $p\left(C_{i}\right)$ by $B_{j_{1},j_{2}-1}(p(u),p(v))=B_{0,1}(i,j)=1$,
which is a contradition. Hence $c_{i}<3^{j_{1}}2^{j_{2}-1}$. Thus
$\left|U\right|<3^{j_{1}}2^{j_{2}}$.

Now suppose $i$ and $j$ are unlinked. Then $\left|U\right|=3^{j_{1}}2^{j_{2}}$
if and only if $p\left(C_{i}\right)$ and $p\left(C_{j}\right)$ are
maximal unlinked sets. But these must also be unlinked with each other
and hence $p\left(C_{i}\right)=p\left(C_{j}\right)$ by maximality.
We have then that $p(U)=p(C_{i})=V\times W$ for $V$ a type and $W$
a maximal unlinked set. Thus $U=V\times\left(W\times\{i,j\}\right)$.

\textbf{Case 3: }Suppose at least three of the $c_{i}$ are non-zero.
Then for any unlinked pair of indices $\{i,j\}$, there exists $k$
such that $c_{k}\ne0$ and at least one of $\{i,k\},\{j,k\}$ are
unlinked. So it follows that $p$ is injective on $C_{i}\cup C_{j}$
so that $c_{i}+c_{j}\le3^{j_{1}}2^{j_{2}-1}$ with equality if and
only if $p(C_{i}\cup C_{j})=V\times W$ is an unlinked set of maximal
size in $G_{j_{1},j_{2}-1}$ by the inductive hypothesis. Then $|U|=(c_{0}+c_{2})+(c_{1}+c_{3})\le3^{j_{1}}2^{j_{2}}$
with equality if and only if $p(C_{0}\cup C_{2})$ and $p(C_{1}\cup C_{3})$
are unlinked of maximal size in $G_{j_{1},j_{2}-1}$. But we also
have that $|U|=(c_{0}+c_{3})+(c_{1}+c_{2})\le3^{j_{1}}2^{j_{2}}$
with equality if and only if $p(C_{0}\cup C_{3})$ and $p(C_{1}\cup C_{2})$
are unlinked of maximal size in $G_{j_{1},j_{2}-1}$. Suppose we have
equality, and by the inductive hypothesis suppose $p(C_{0}\cup C_{3})=V_{0,3}\times W_{0,3}$
and $p(C_{0}\cup C_{2})=V_{0,2}\cup W_{0,2}$. Then $p(C_{0})\subset p(C_{0}\cup C_{1})\cap p(C_{0}\cup C_{2})$,
so it follows that $V_{0,3}$ and $V_{0,2}$ must have the same type,
as theire intersection is non trivial and the types are mutually disjoint.
In particular $V_{0,3}=V_{0,2}$. By symmetry, we find that $V_{0,3}=V_{1,3}=V_{1,2}=V_{0,2}$.
In particular, since $p(U)=V_{0,3}\times W_{0,3}\cup V_{1,2}\times W_{1,2}$
we find that $p(U)=V\times W$ for $V=V_{0,3}$ a type and $W$ some
unlinked set. But $|W|=2^{j_{2}}$, and so it is a maximal unlinked
set in $Y_{2}^{j_{2}}$. 
\end{proof}
Combining this with the results of Section \ref{sec:Bounding-the-Error}
we have shown: 
\begin{prop}
For $j_{1}<k$ we have 
\[
S_{j_{1},j_{2},j_{3}}\left(X\right)=O\left(X\left(\log X\right)^{2^{j_{3}}a^{k}3^{k-j_{3}}-2^{j_{3}}a^{k}-1+\epsilon}\right).
\]
\end{prop}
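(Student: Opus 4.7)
The plan is to combine the reductions from Section \ref{sec:Bounding-the-Error} with the classification of largest maximal unlinked sets from the previous section to show that, when $j_1 < k$, the surviving family of tuples $\mathbf{A}$ is in fact empty, so the entire $S_{j_1,j_2,j_3}(X)$ is swallowed by the error term already produced.

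More concretely, I would start by recalling that after removing the contributions of $\mathcal{F}_1$, $\mathcal{F}_2$, and $\mathcal{F}_3$, Section \ref{sec:Bounding-the-Error} gives
\[
S_{j_{1},j_{2},j_{3}}\!\left(X\right)=\sideset{}{^{'}}\sum_{\mathbf{A}}S_{j_{1},j_{2},j_{3}}\!\left(X,\mathbf{A}\right)+O\!\left(X\left(\log X\right)^{2^{j_{3}}a^{k}3^{k-j_{3}}-2^{j_{3}}a^{k}-1+\epsilon}\right),
\]
where the primed sum ranges over $\mathbf{A}$ that avoid all three of the families $\mathcal{F}_1$, $\mathcal{F}_2$, $\mathcal{F}_3$. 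The crucial structural fact to extract is that any such $\mathbf{A}$ must have at least $U(j_3)$ coordinates satisfying $A_i > X^{\ddagger}$ (failure of $\mathcal{F}_2$), and no two of these coordinates can be linked (failure of $\mathcal{F}_3$). Hence the set of indices with $A_i > X^{\ddagger}$ is an unlinked subset of $Y$ of size at least $U(j_3)$.

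The main step is then a simple numerical comparison using Proposition \ref{thm:trivialunlinked}, which shows that any unlinked subset of $Y = Y_1^{j_1} \times Y_2^{j_2}$ has cardinality at most $3^{j_1}2^{j_2}$. Assuming $j_1 < k$, so that $j_2 + j_3 \ge 1$, I split into two cases. If $j_3 > 0$, then $U(j_3) = 3^{k-j_3}+1 = 3^{j_1+j_2}+1$, and since $2^{j_2} \le 3^{j_2}$ one gets $3^{j_1}2^{j_2} \le 3^{j_1+j_2} < U(j_3)$. If $j_3 = 0$, then $U(j_3) = 3^k$ and necessarily $j_2 \ge 1$, so $3^{j_1}2^{j_2} < 3^{j_1+j_2} = 3^k = U(j_3)$. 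In both cases the size of the unlinked subset one would need exceeds the size of any unlinked subset of $Y$, forcing the primed sum to be empty.

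The only potentially delicate point is bookkeeping: one must be sure that the error term announced by the three lemmas of Section \ref{sec:Bounding-the-Error} is indeed the one stated in the proposition (with the exponent $2^{j_3}a^k 3^{k-j_3} - 2^{j_3}a^k - 1 + \epsilon$), and that the choice of $\eta(k)$ made there is compatible with swallowing the $\eta(k)6^{j_1}4^{j_2}$ contribution of Lemma \ref{lem:-secondfamily} into the $\epsilon$. Since the previous section made exactly this accounting (taking $\eta(k) = \epsilon \, 6^{-j_1}4^{-j_2}2^{-j_3}a^{-k}$), there is no real obstacle; the proof reduces to citing Proposition \ref{thm:trivialunlinked} and observing the strict inequality $3^{j_1}2^{j_2} < U(j_3)$ whenever $j_1 < k$.
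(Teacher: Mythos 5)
Your proposal is correct and follows essentially the same route as the paper: the paper likewise reduces $S_{j_1,j_2,j_3}(X)$ to the primed sum plus the stated error term via the lemmas of Section \ref{sec:Bounding-the-Error}, and then observes (using Proposition \ref{thm:trivialunlinked}) that the at least $U(j_3)$ unlinked indices with $A_i > X^{\ddagger}$ cannot exist since $3^{j_1}2^{j_2} < U(j_3)$ whenever $j_1 < k$, so the primed sum is empty. Your case analysis on $j_3>0$ versus $j_3=0$ and the remark about the choice of $\eta(k)$ correctly fill in the arithmetic the paper leaves implicit.
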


\section{Computing the Moments }

It now remains to consider the contribution to $S_{k}\left(X\right)$
from $\sum_{\mathbf{A}}S_{k,0,0}\left(X,\mathbf{A}\right)$. Fix $\mathbf{A}$
as above. Then by $\left(\ref{eq:familycondition}\right)$ there will
be exactly $3^{k}$ unlinked variables $A_{u}$ greater than $X^{\ddagger}$
and all the remaining ones will satisfy $A_{v}=1$. This combined
with quadratic reciprocity reduces $S_{k,0,0}\left(X,\mathbf{A}\right)$
to the following expression, which we are further partitioning by
the congruence classes of each $D_{u}$:

\begin{eqnarray}
S_{k,0,0}\left(X,\mathbf{A}\right) & = & \sum_{\left(h_{u}\right)}\frac{1}{2^{4k}}\sum_{\left(D_{u}\right)}\mu^{2}\left(d\right)a^{k\omega(d)}\left[\prod_{u}\left(-1\right)^{\lambda_{k}(u)\left(u\right)\frac{h_{u}-1}{2}}\right]\label{eq:congruencepartition}\\
 &  & \times\left[\prod_{u,v}\left(-1\right)^{\Phi_{k}\left(u,v\right)\frac{h_{u}-1}{2}\frac{h_{v}-1}{2}}\right]+O\left(X\left(\log X\right)^{(3a)^{k}-a^{k}-1+\epsilon}\right)\nonumber 
\end{eqnarray}
where $h_{u}$ denotes the congruence class of $D_{u}$ mod $4$.
Next we would like to remove the congruence condition on the inner
sum over the $\left(D_{u}\right)$.

We will use the following result from \cite{fk1}, again refering
there for the proof. 
\begin{lem}[Fouvry-Klüners \cite{fk1}]
\label{lem:For-any-fixed}For any fixed tuple $\left(h_{u}\right)$
with $h_{u}\equiv\pm1\mod4$ we have 
\[
\sum_{\left(D_{u}\equiv h_{u}\mod4\right)}\mu^{2}\left(d\right)a^{k\omega\left(d\right)}=\frac{1}{2^{3^{k}}}\sum_{D_{u}}\mu^{2}\left(2d\right)a^{k\omega\left(d\right)}+O\left(X\left(\log X\right)^{-16^{k}\left(1+2^{k}\right)}\right).
\]
\end{lem}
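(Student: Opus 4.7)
The plan is to detect each congruence condition by Dirichlet character orthogonality modulo $4$. Since $h_u \equiv \pm 1 \mod 4$ forces every $D_u$ appearing on the left-hand side to be odd, one has $\mu^2(d) = \mu^2(2d)$ throughout that sum. For each of the $3^k$ free indices $u$, the indicator of the condition $D_u \equiv h_u \mod 4$ can be written as
\[
\tfrac{1}{2}\bigl(1 + \chi_4(D_u)\chi_4(h_u)\bigr),
\]
where $\chi_4$ is the unique nontrivial Dirichlet character modulo $4$. Multiplying these $3^k$ identities together and expanding yields $2^{3^k}$ terms indexed by subsets $T \subseteq \{1, \ldots, 3^k\}$; the subset $T = \emptyset$ contributes precisely the claimed main term $2^{-3^k}\sum_{(D_u)} \mu^2(2d)\, a^{k\omega(d)}$.

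It remains to show that for every nonempty $T$ the twisted sum
\[
R_T(X) = \sum_{\prod D_u < X} \mu^2(2d)\, a^{k\omega(d)} \prod_{u \in T} \chi_4(D_u)
\]
is $O\bigl(X(\log X)^{-N}\bigr)$ for any fixed $N$, uniformly in $T$. The associated Dirichlet series admits an Euler product, and inspection of its local factor at an odd prime $p$ shows that for nonempty $T$ the $L$-function factorization near $s = 1$ contains at least one copy of $L(s, \chi_4)$ in place of a factor of $\zeta(s)$ that would appear in the principal case. Since $L(s, \chi_4)$ is entire with $L(1, \chi_4) \neq 0$ and admits a standard zero-free region to the left of $\mathrm{Re}(s) = 1$, the Selberg--Delange method (equivalently, Perron's formula with a contour shifted past $\mathrm{Re}(s) = 1$ along a Hankel path) converts this into a saving of arbitrarily many powers of $\log X$. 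Choosing $N = 16^k(1 + 2^k)$ then yields the claimed error term.

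The main obstacle is the Euler-product bookkeeping needed to verify that, for every nonempty $T$, the local factor's $L$-function decomposition genuinely contains at least one copy of $L(s, \chi_4)$ with positive exponent, with no accidental cancellation arising from the weight $a^{k\omega(d)}$ or from the $3^k$-fold factorization structure $d = \prod_u D_u$. This is a direct adaptation of the argument in \cite{fk1}: the weight $a^{k\omega(d)}$ enters each local factor only multiplicatively, shifting the $\zeta$-exponent in the principal case $T = \emptyset$ but leaving the character-saving mechanism intact for $T \neq \emptyset$. Uniformity in the $2^{3^k} - 1$ nonempty $T$ is automatic from the boundedness of the relevant $L$-function exponents in terms of $k$.
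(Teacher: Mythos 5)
Your overall strategy---detecting each congruence $D_{u}\equiv h_{u}\bmod4$ by orthogonality of the characters mod $4$, extracting the main term from the principal contribution, and estimating the $2^{3^{k}}-1$ twisted sums---is the same skeleton as the Fouvry--Kl\"uners argument to which the paper defers for this lemma. The gap is in your estimation of the twisted sums. For a nonempty \emph{proper} subset $T$ of the $3^{k}$ active indices, the Euler factor of the Dirichlet series attached to $R_{T}$ at an odd prime $p$ is $1+a^{k}p^{-s}\left((3^{k}-|T|)+|T|\chi_{4}(p)\right)$, so near $s=1$ the series factors as $\zeta(s)^{a^{k}(3^{k}-|T|)}L(s,\chi_{4})^{a^{k}|T|}G(s)$ with $G$ holomorphic and nonvanishing. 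The exponent of $\zeta$ is still strictly positive, and Selberg--Delange then gives $R_{T}(X)\ll X(\log X)^{a^{k}(3^{k}-|T|)-1}$ with this order generically attained, \emph{not} $O(X(\log X)^{-N})$ for every $N$. Concretely, for $|T|=1$ this is $X(\log X)^{(3a)^{k}-a^{k}-1}$, which for $a\ge1/3$ is at least of size $X(\log X)^{-3^{-k}}$ and hence vastly larger than the claimed $O\left(X(\log X)^{-16^{k}(1+2^{k})}\right)$. Replacing one copy of $\zeta(s)$ by $L(s,\chi_{4})$ lowers the logarithmic exponent by $a^{k}$; it does not remove the polar part, so there is no ``saving of arbitrarily many powers of $\log X$'' from the unrestricted sum.

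The missing ingredient is the localization that is in force when the lemma is applied: by condition $\left(\ref{eq:familycondition}\right)$ each of the $3^{k}$ active variables is confined to a short window $A_{u}\le D_{u}\le\Delta A_{u}$ with $A_{u}>X^{\ddagger}=\exp\left(\log^{\eta(k)}X\right)$, and all remaining variables equal $1$. One then fixes all variables but one twisted one and applies a Siegel--Walfisz type estimate for $\mu^{2}(n)a^{k\omega(n)}\chi_{4}(n)$ over $[A_{u},\Delta A_{u}]$, uniformly in the finitely many excluded primes; since $\log A_{u}\ge\log^{\eta(k)}X$, the saving is $\exp\left(-c\log^{\eta(k)/2}X\right)$, which does beat every fixed power of $\log X$ and survives the subsequent summation over the polylogarithmically many admissible $\mathbf{A}$. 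Your argument never uses these range restrictions and therefore cannot reach the stated error term; even a corrected version of your unrestricted computation only yields an error $O\left(X(\log X)^{(3a)^{k}-a^{k}-1+\epsilon}\right)$, which is already the size of the final error in Theorem \ref{thm:mainthm3} and leaves no room for the summation over $\mathbf{A}$ in the way the lemma is actually deployed.
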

Fix a maximal unlinked set of indices $\mathcal{U}$. We will call
any $\mathbf{A}$ satisfying $\left(\ref{eq:familycondition}\right)$
admissible for $\mathcal{U}$ if $A_{u}>X^{\ddagger}$ exactly when
$u\in\mathcal{U}$. Applying Lemma \ref{lem:For-any-fixed} to $\left(\ref{eq:congruencepartition}\right)$
and rearranging summations and also summing over all $\mathbf{A}$
admissible for $\mathcal{U}$ we get 
\begin{eqnarray*}
\sum_{\mathbf{A}\mathrm{\ admissible\ for\ }\mbox{\ensuremath{\mathcal{U}}}}S_{k,0,0}\left(X,\mathbf{A}\right) & = & \frac{1}{2^{4k+3^{k}}}\left(\sum_{\left(D_{u}\right)}\mu^{2}\left(2d\right)a^{k\omega\left(d\right)}\right)\sum_{\left(h_{u}\right)}\left[\prod_{u}\left(-1\right)^{\lambda_{k}\left(u\right)\frac{h_{u}-1}{2}}\right]\\
 &  & \times\left[\prod_{u,v}\left(-1\right)^{\Phi_{k}\left(u,v\right)\frac{h_{u}-1}{2}\frac{h_{v}-1}{2}}\right]+O\left(X\left(\log X\right)^{(3a)^{k}-a^{k}-1+\epsilon}\right).
\end{eqnarray*}
Note the summation is over $\left(D_{u}\right)$ such that there is
some $\mathbf{A}$ admissible for $\mathcal{U}$ with $A_{u}\le D_{u}\le\Delta A_{u}$.
However we can include the missing terms to extend the range to $1\le D_{u}<X$
at the cost an error of $O\left(X\left(\log X\right)^{(3a)^{k}-a^{k}-1+\epsilon}\right)$
by Lemma \ref{lem:-secondfamily}.

Summing over all maximal unlinked sets $\mathcal{U}$ we get 
\begin{eqnarray*}
S_{k,0,0}\left(X\right) & = & \frac{1}{2^{4k+3^{k}}}\left(\sum_{\mathcal{U}}\gamma\left(\mathcal{U}\right)\right)\left(\sum_{n<X}\mu^{2}\left(2n\right)\left(3a\right)^{k\omega(n)}\right)+O\left(X\left(\log X\right)^{(3a)^{k}-a^{k}-1+\epsilon}\right)
\end{eqnarray*}
where we define 
\[
\gamma\left(\mathcal{U}\right)=\sum_{\left(h_{u}\right)}\left[\prod_{u}\left(-1\right)^{\lambda_{k}\left(u\right)\frac{h_{u}-1}{2}}\right]\left[\prod_{u,v}\left(-1\right)^{\Phi_{k}\left(u,v\right)\frac{h_{u}-1}{2}\frac{h_{v}-1}{2}}\right].
\]
Recall (see $\left(\ref{eq:negative1mod4congruence1}\right)$ and
$\left(\ref{eq:negative1mod4congruence2}\right)$) that we are allowing
all possible congruence classes $\left(h_{u}\right)$ satisfying the
conditions: for all $1\le i\le k$ and $\left(u_{i},v_{i}\right)\in\left\{ \left(0,3\right),\left(2,5\right),\left(4,1\right)\right\} $

\begin{equation}
\prod_{u}h_{u}\prod_{v}h_{v}\equiv\begin{cases}
-1\mod4 & \mbox{if }\left(u_{i},v_{i}\right)=\left(0,3\right)\\
1\mod4 & \mbox{if }\left(u_{i},v_{i}\right)=\left(2,5\right),\left(4,1\right)
\end{cases}\label{eq:negative1mod4congruence_k}
\end{equation}
where the above products are over all $u$ with $u_{i}$ in the $i$th
position and all $v$ with $v_{i}$ in the $i$th position.

So far we have shown

\[
S_{k,0,0}\left(X\right)=\frac{1}{2^{4k+3^{k}}}\left(\sum_{\mathcal{U}}\gamma(U)\right)\left(\sum_{n<X}\mu^{2}(2n)(3a)^{k\omega(n)}\right)+O\left(X\left(\log X\right)^{(3a)^{k}-a^{k}-1+\epsilon}\right)
\]
We will now prove 
\begin{prop}
\label{prop:gamma1mod4negative} 
\[
\sum_{\mathcal{U}}\gamma\left(\mathcal{U}\right)=2^{3^{k}-k-1}.
\]
\end{prop}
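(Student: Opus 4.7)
The plan is to reduce $\gamma(\mathcal{U})$ to a linear-algebra count over $\mathbb{F}_{2}$. Since each $h_{u}\equiv\pm1\pmod 4$, I substitute $x_{u}=(h_{u}-1)/2\in\mathbb{F}_{2}$; the conditions $\left(\ref{eq:negative1mod4congruence_k}\right)$ then become affine-linear constraints on $x$, and
\[
\gamma(\mathcal{U})=\sum_{x\in V_{\mathcal{U}}}(-1)^{Q(x)},\qquad Q(x)=\sum_{u}\lambda_{k}(u)x_{u}+\sum_{u,v}\Phi_{k}(u,v)x_{u}x_{v},
\]
where $V_{\mathcal{U}}\subseteq\mathbb{F}_{2}^{\mathcal{U}}$ is the affine subspace cut out by those constraints; the sums in $Q$ may be restricted to $u,v\in\mathcal{U}$ because $x_{w}=0$ for $w\notin\mathcal{U}$. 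By Propositions \ref{thm:maxunlinkedsets} and \ref{thm:trivialunlinked}, $\mathcal{U}=U_{s}$ for some $s\in\{A,B\}^{k}$, so there are $2^{k}$ maximal unlinked sets, each of size $3^{k}$. It therefore suffices to show that $|V_{U_{s}}|=2^{3^{k}-2k-1}$ and that $Q$ vanishes identically on $V_{U_{s}}$, giving
\[
\sum_{\mathcal{U}}\gamma(\mathcal{U})=2^{k}\cdot 2^{3^{k}-2k-1}=2^{3^{k}-k-1}.
\]

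For the dimension, identify $U_{s}$ with $\prod_{i}s_{i}$. Since $h_{w}=1$ for $w\notin U_{s}$, the congruences collapse to $3k$ constraints $L_{i,c}(x)=\epsilon_{i,c}$, one for each $i\in\{1,\ldots,k\}$ and $c\in s_{i}$, where $L_{i,c}(x):=\sum_{u\in U_{s},\,u_{i}=c}x_{u}$ and $\epsilon_{i,c}=1$ iff $c\in\{0,3\}$. The $L_{i,c}$ correspond to indicator vectors $\chi_{i,c}\in\mathbb{F}_{2}^{U_{s}}$ of the fibers of the $i$-th projection, and $V_{i}:=\mathrm{span}\{\chi_{i,c}:c\in s_{i}\}$ is the $3$-dimensional subspace of functions depending only on the $i$-th coordinate. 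A short induction using $V_{i}\cap(V_{1}+\cdots+V_{i-1})=\mathbb{F}_{2}\cdot\mathbf{1}$ yields $\dim\sum_{i}V_{i}=3+2(k-1)=2k+1$, so the $3k$ constraints have exactly $k-1$ dependencies, each of the form $\sum_{c\in s_{i}}L_{i,c}=\sum_{c\in s_{j}}L_{j,c}$. Consistency follows from $\sum_{c\in s_{i}}\epsilon_{i,c}=1$ for every $i$, so $|V_{U_{s}}|=2^{3^{k}-2k-1}$.

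For the vanishing of $Q$, split by copy as $Q(x)=\sum_{i=1}^{k}Q_{i}(x)$ with $Q_{i}(x)=\sum_{u}\lambda(u_{i})x_{u}+\sum_{u,v}\Phi(u_{i},v_{i})x_{u}x_{v}$. When $s_{i}=A=\{1,3,5\}$, inspection of $\left(\ref{eq:negative1mod4}\right)$ shows $\lambda(a)=0$ and $\Phi(a,b)=0$ for all $a,b\in A$, so $Q_{i}\equiv 0$. When $s_{i}=B=\{0,2,4\}$, the linear part of $Q_{i}$ is $L_{i,2}+L_{i,4}$, which vanishes on $V_{U_{s}}$ by the constraints; and from $\left(\ref{eq:negative1mod4}\right)$ one reads $\Phi(a,b)=1$ for every ordered pair of distinct $a,b\in B$ and $\Phi(a,a)=0$, so the quadratic part is
\[
\sum_{a\neq b\in B}L_{i,a}(x)L_{i,b}(x)=2\bigl(L_{i,0}L_{i,2}+L_{i,0}L_{i,4}+L_{i,2}L_{i,4}\bigr)\equiv 0\pmod 2.
\]
Hence $Q\equiv 0$ on $V_{U_{s}}$ and $\gamma(U_{s})=|V_{U_{s}}|$, completing the proof.

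The principal obstacle is the rank computation for $\sum_{i}V_{i}$; the product structure $U_{s}\cong\prod_{i}s_{i}$ makes the inductive argument routine. The parity vanishing of the quadratic part of $Q$ on each $B$-block ultimately reflects the fact that within $B$ the symbols $(D_{u}/D_{v})$ and $(D_{v}/D_{u})$ always appear in pairs in the expansion of $f_{1}$, which is exactly the unlinked condition underlying Proposition \ref{thm:maxunlinkedsets}.
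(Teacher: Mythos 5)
Your argument is correct, and its core is the same as the paper's: identify the congruence classes $(h_u)$ with vectors in $\F_{2}^{3^{k}}$, observe that the exponent of $-1$ vanishes on the solution set of the congruence constraints, and count that set. The differences are in the packaging and in one sub-lemma. The paper splits the statement into Lemma \ref{lem:sumgamma} ($\sum_{\mathcal{U}}\gamma(\mathcal{U})=2^{k+\dim\ker M_{k}}$, obtained by interchanging the sum over types $s\in S$ with the sum over $x$ and applying the binomial theorem to get a product $\prod_{j}(1+(-1)^{e_{j}})$ with each $e_{j}=0$) and Lemma \ref{thm:kerMk} ($\dim\ker M_{k}=3^{k}-2k-1$, proved by induction on the recursive block structure of $M_{k}$). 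You instead evaluate each $\gamma(U_{s})=|V_{U_{s}}|$ separately and multiply by the $2^{k}$ types --- equivalent to the paper's binomial-theorem step --- and you obtain the dimension by computing the rank of the span of the fiber indicators, $\dim\sum_{i}V_{i}=2k+1$ via $V_{i}\cap(V_{1}+\cdots+V_{i-1})=\F_{2}\cdot\mathbf{1}$, rather than by the paper's matrix induction. Your route has the small advantage of making explicit both the generators of the $k-1$ dependencies among the $3k$ constraints and the consistency check $\sum_{c\in s_{i}}\epsilon_{i,c}=1$, which the paper leaves implicit. One caveat: you treat the quadratic part of the exponent as a sum over \emph{ordered} pairs, so that it vanishes ``for free'' by the factor of $2$; in the paper's derivation via quadratic reciprocity the product is effectively over unordered pairs, and the quadratic part of $Q_{i}$ on a $B$-block is $L_{i,0}L_{i,2}+L_{i,0}L_{i,4}+L_{i,2}L_{i,4}$, which is not identically zero but does vanish on $V_{U_{s}}$ because the constraints force $L_{i,2}=L_{i,4}=0$ there. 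Since that is exactly the mechanism you already invoke for the linear part, the conclusion $\gamma(U_{s})=|V_{U_{s}}|$ and the final count $2^{3^{k}-k-1}$ are unaffected.
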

Consider the vector space $\mathbb{F}_{2}^{3^{k}}$ where each coordinate
corresponds to an index in $\mathcal{U}$ using the lexicographic
order. We let each $y\in\mathbb{F}_{2}^{3^{k}}$ correspond to a tuple
of congruence classes $\left(h_{u}\right)$ by 
\begin{equation}
h_{u}\equiv-1\mod4\iff y_{u}=1.\label{eq:congruencetovector}
\end{equation}
Recursively define a matrix

\begin{align*}
M_{k} & =\begin{pmatrix}\vec{1} & \vec{0} & \vec{0}\\
\vec{0} & \vec{1} & \vec{0}\\
\vec{0} & \vec{0} & \vec{1}\\
M_{k-1} & M_{k-1} & M_{k-1}
\end{pmatrix}
\end{align*}
where $M_{1}=I_{3}$ is the identity matrix and $\vec{0},\vec{1}$
are row vectors of $0$s and $1$s respectively.

For example for $k=2$ we get 
\[
M_{2}=\left(\begin{array}{ccccccccc}
1 & 1 & 1\\
 &  &  & 1 & 1 & 1\\
 &  &  &  &  &  & 1 & 1 & 1\\
1 &  &  & 1 &  &  & 1\\
 & 1 &  &  & 1 &  &  & 1\\
 &  & 1 &  &  & 1 &  &  & 1
\end{array}\right).
\]
Then the $y\in\F_{2}^{3^{k}}$ satisfying condition $\left(\ref{eq:negative1mod4congruence_k}\right)$
are solutions of $M_{k}y=w$ for an appropriate $w\in\F_{2}^{3k}$.
This set of solutions is the coset $y+\ker M_{k}$.

Now we will prove \ref{prop:gamma1mod4negative} by combining the
following two lemmas. 
\begin{lem}
\label{lem:sumgamma} For all $k\ge1$ 
\[
\sum_{\mathcal{U}}\gamma\left(\mathcal{U}\right)=2^{k+\dim\ker M_{k}}.
\]
\end{lem}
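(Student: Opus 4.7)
The plan is to prove the stronger statement that $\gamma(\mathcal{U}) = 2^{\dim\ker M_k}$ for every maximal unlinked set $\mathcal{U}$; since Proposition~\ref{thm:maxunlinkedsets} produces exactly $2^k$ such sets, one for each $s\in\{A,B\}^k$, summing then gives the lemma. I would begin by using the correspondence \eqref{eq:congruencetovector} between $(h_u)$ and $y\in\F_2^{3^k}$ to rewrite
$$\gamma(\mathcal{U})=\sum_{y\in y_0+\ker M_k}(-1)^{L(y)+Q(y)},$$
where $L(y)=\sum_u\lambda_k(u)y_u$, $Q(y)=\sum_{u,v}\Phi_k(u,v)y_uy_v$, and $y_0$ is any particular solution of the linear system $M_ky=w$ encoding the congruences \eqref{eq:negative1mod4congruence_k}.

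The first observation is that $Q$ collapses to nothing on an unlinked set. Using $y_u^2=y_u$ in $\F_2$,
$$Q(y)=\sum_u\Phi_k(u,u)y_u+\sum_{u<v}\bigl(\Phi_k(u,v)+\Phi_k(v,u)\bigr)y_uy_v.$$
The off-diagonal coefficients vanish in $\F_2$ precisely because $\mathcal{U}$ is unlinked, and the diagonal coefficients $\Phi_k(u,u)=\sum_i\Phi(u_i,u_i)$ also vanish, since inspection of \eqref{eq:negative1mod4} shows that no Legendre symbol of the form $(D_a/D_a)$ ever appears. Hence $Q\equiv 0$ on $\mathcal{U}$, and it remains to show that $L$ is identically zero on the coset $y_0+\ker M_k$.

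This is the main technical step. I would first verify by induction on $k$, from the recursive definition of $M_k$, that under the bijection $\mathcal{U}_s\leftrightarrow\{0,1,2\}^k$ sending $u$ to $(x_1(u),\dots,x_k(u))$---where $x_i(u)$ denotes the index of $u_i$ within $s_i$---the rows of $M_k$ correspond precisely to the $3k$ slice conditions $\sum_{u:x_i(u)=j}y_u=w_{(i,j)}$, with $w_{(i,j)}=1$ on the slice $x_i=1$ when $s_i=A$, on the slice $x_i=0$ when $s_i=B$, and $0$ otherwise. As a particular solution I would take $y_0=e_{u^*}$, the indicator of the single vertex $u^*\in\mathcal{U}_s$ with $u_i^*=3$ when $s_i=A$ and $u_i^*=0$ when $s_i=B$; a direct check shows $y_0$ satisfies exactly the required slice equations, and since neither $3$ nor $0$ lies in $\{2,4\}$ we have $\lambda_k(u^*)=0$ and so $L(y_0)=0$. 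Finally, for $z\in\ker M_k$, expanding
$$L(z)=\sum_u\lambda_k(u)z_u=\sum_i\sum_{u:u_i\in\{2,4\}}z_u$$
and decomposing each inner sum into at most two slice sums of the form $\sum_{u:x_i(u)=j}z_u$ exhibits $L(z)$ as a sum of defining conditions of $\ker M_k$, each of which is zero. The main obstacle I foresee is the careful bookkeeping needed to match the recursively-defined $M_k$ with the coordinate-wise slice conditions; once that correspondence is pinned down, every vanishing assertion above is immediate.
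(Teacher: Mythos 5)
Your overall architecture is the right one and matches the paper's: identify the admissible tuples $(h_u)$ with a coset $y_0+\ker M_k$, show that the sign $(-1)^{L+Q}$ is identically $+1$ on that coset so that $\gamma(\mathcal{U})=2^{\dim\ker M_k}$, and multiply by the $2^k$ types. Your treatment of the linear part $L$ (decomposing it into slice sums that are among the defining equations of the coset) is exactly the paper's argument, and your identification of the rows of $M_k$ with slice conditions is correct.

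However, there is a genuine gap in your ``first observation'' that $Q$ collapses to nothing on an unlinked set. You treat the product $\prod_{u,v}$ as running over \emph{ordered} pairs, so that the coefficient of $y_uy_v$ is $\Phi_k(u,v)+\Phi_k(v,u)$, which vanishes by unlinkedness. But the quadratic factor in $\gamma(\mathcal{U})$ arises from quadratic reciprocity applied to $\left(\frac{D_u}{D_v}\right)^{\Phi_k(u,v)}\left(\frac{D_v}{D_u}\right)^{\Phi_k(v,u)}$: when both exponents equal $1$ the pair contributes a single factor $(-1)^{\frac{h_u-1}{2}\frac{h_v-1}{2}}$, not two cancelling ones. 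This is why the paper writes the product as $\prod_{\{u,v\}}$ over \emph{unordered} pairs with exponent $\Phi_k(u,v)$ (which equals $\Phi_k(v,u)$ precisely because the pair is unlinked, but is typically $1$, e.g.\ $\Phi(0,2)=\Phi(2,0)=1$ for any pair in $B^k$ differing in some coordinate). So $Q$ is \emph{not} identically zero on $\mathcal{U}$; already for $k=1$, $\mathcal{U}=\{0,2,4\}$ one has $Q(x)=x_0x_2+x_0x_4+x_2x_4$. What is true, and what the paper proves, is that $Q$ vanishes \emph{on the coset}: one writes
\[
\sum_{\{u,v\}\subset\mathcal{U}_B}\Phi(u_j,v_j)x_ux_v=\sum_{u_j=0}x_u\sum_{v_j=2}x_v+\sum_{u_j=0}x_u\sum_{v_j=4}x_v+\sum_{u_j=2}x_u\sum_{v_j=4}x_v
\]
and invokes the slice conditions $\sum_{u_j=2}x_u=\sum_{u_j=4}x_u=0$. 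In other words, the quadratic part needs the same coset-slice argument you correctly apply to $L$; it cannot be dismissed a priori. Your final count is right, but this step must be repaired. (A minor stylistic difference: the paper sums over all $2^k$ types first and uses the binomial theorem to reduce everything to the single set $\mathcal{U}_B=B^k$, whereas you handle each type separately; either bookkeeping works once the quadratic part is handled correctly.)
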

\begin{proof}
Recall 
\[
\gamma(\mathcal{U})=\sum_{\left(h_{u}\right)}\left[\prod_{u}\left(-1\right)^{\lambda_{k}\left(u\right)\frac{h_{u}-1}{2}}\right]\left[\prod_{\{u,v\}}\left(-1\right)^{\Phi_{k}\left(u,v\right)\frac{h_{u}-1}{2}\frac{h_{v}-1}{2}}\right]
\]
By the discussion above, for any $\left(h_{u}\right)$ we let $x\in\F_{2}^{3^{k}}$
be the vector corresponding to it by $\left(\ref{eq:congruencetovector}\right)$.
Then $x$ belongs to a coset of $\ker M_{k}$, call it $y+\ker M_{k}$.
In particular, rephrasing the congruence conditions $\left(\ref{eq:negative1mod4congruence_k}\right)$
shows us that $\sum_{u:u_{i}=a}x_{u}=1$ if and only if $a\in\{0,3\}$.

Now consider that $\Phi(u,v)=0$ if $v\in A=\{1,3,5\}$. $\mathcal{U}$
is a largest maximal unlinked set, and so has a type $s\in S$, so
it follows that 
\[
\Phi_{k}(u,v)=\sum_{i}\Phi(u_{i},v_{i})=\sum_{i:s_{i}=B}\Phi(u_{i},v_{i})
\]
And similarly, $\lambda|_{A}=0$ so that 
\[
\lambda(u)=\sum_{i}\lambda(u_{i})=\sum_{i:s_{i}=B}\lambda(u_{i})
\]
This way we show that 
\begin{align*}
\sum_{\mathcal{U}}\gamma(\mathcal{U}) & =\sum_{s\in S}\sum_{x\in y+\ker M_{k}}\left[\prod_{u}\left(-1\right)^{\sum_{i:s_{i}=B}\lambda(u_{i})x_{u}}\right]\left[\prod_{\{u,v\}}\left(-1\right)^{\sum_{s_{i}=B}\Phi(u_{i},v_{i})x_{u}x_{v}}\right]\\
 & =\sum_{s\in S}\sum_{x\in y+\ker M_{k}}(-1)^{\sum_{u}\sum_{i:s_{i}=B}\lambda(u_{i})x_{u}+\sum_{\{u,v\}}\sum_{i:s_{i}=B}\Phi(u_{i},v_{i})x_{u}x_{v}}.
\end{align*}
Call $\mathcal{U}_{B}=B^{k}$. Interchanging summations we can apply
the binomial theorem to the sum over types $s\in S$ to show 
\[
\sum_{\mathcal{U}}\gamma(\mathcal{U})=\sum_{x\in y+\ker M_{k}}\prod_{j=1}^{k}\left(1+\left(-1\right){}^{\sum_{u\in\mathcal{U}_{B}}\lambda(u_{j})x_{u}+\sum_{\{u,v\}\subset\mathcal{U}_{B}}\Phi(u_{j},v_{j})x_{u}x_{v}}\right).
\]
Notice that for all $j=1,...,k$ we have 
\begin{align*}
\sum_{\{u,v\}\subset\mathcal{U}_{B}}\Phi(u_{j},v_{j})x_{u}x_{v} & =\sum_{u_{j}=0,v_{j}=2}x_{u}x_{v}+\sum_{u_{j}=0,v_{j}=4}x_{u}x_{v}+\sum_{u_{j}=2,v_{j}=4}x_{u}x_{v}\\
 & =\sum_{u_{j}=0}x_{u}\sum_{v_{j}=2}x_{v}+\sum_{u_{j}=0}x_{u}\sum_{v_{j}=4}x_{v}+\sum_{u_{j}=2}x_{u}\sum_{v_{j}=4}x_{v}
\end{align*}
and 
\[
\sum_{u\in\mathcal{U}_{B}}\lambda(u_{j})x_{u}=\sum_{u_{j}=2}x_{u}+\sum_{u_{j}=4}x_{u}
\]
and it follows that these are 0 from the conditions $\left(\ref{eq:negative1mod4congruence_k}\right)$.
Thus 
\begin{align*}
\sum_{\mathcal{U}}\gamma(\mathcal{U}) & =\sum_{x\in y+\ker M_{k}}\prod_{j=1}^{k}2\\
 & =2^{k+\dim\ker M_{k}}.
\end{align*}
\end{proof}
\begin{lem}
\label{thm:kerMk} $\dim\ker M_{k}=3^{k}-2k-1$. 
\end{lem}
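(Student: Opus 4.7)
The plan is to compute $\dim\ker M_k$ by a straightforward recursion on $k$, exploiting the block structure of the recursive definition. Write any vector $x\in\mathbb{F}_2^{3^k}$ in the form $x=(x^{(1)},x^{(2)},x^{(3)})$ with $x^{(i)}\in\mathbb{F}_2^{3^{k-1}}$ corresponding to the three column blocks of $M_k$. Reading off the four block rows of $M_k$, the condition $M_k x=0$ is equivalent to the conjunction of
\[
\mathbf{1}\cdot x^{(1)}=\mathbf{1}\cdot x^{(2)}=\mathbf{1}\cdot x^{(3)}=0
\qquad\text{and}\qquad
M_{k-1}\bigl(x^{(1)}+x^{(2)}+x^{(3)}\bigr)=0,
\]
where the first three conditions come from the $\vec{1}$ rows and the last from the bottom block $(M_{k-1}\ M_{k-1}\ M_{k-1})$.

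Next, I would observe the compatibility fact that the all-ones vector $\mathbf{1}\in\mathbb{F}_2^{3^{k-1}}$ lies in the row span of $M_{k-1}$ for every $k\ge 2$: indeed, the top three rows of $M_{k-1}$ are precisely the indicator vectors of a partition of the columns into three blocks, so their sum equals $\mathbf{1}$. Consequently $\ker M_{k-1}\subseteq\mathbf{1}^\perp$, and for any $y\in\ker M_{k-1}$ one automatically has $\mathbf{1}\cdot y=0$.

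Now I would count solutions. For each choice of $y\in\ker M_{k-1}$ and each choice of $x^{(1)},x^{(2)}\in\mathbf{1}^\perp\subset\mathbb{F}_2^{3^{k-1}}$, the vector $x^{(3)}:=y+x^{(1)}+x^{(2)}$ is forced, and it automatically satisfies $\mathbf{1}\cdot x^{(3)}=0$ by the previous paragraph. This bijection establishes the recursion
\[
\dim\ker M_k \;=\; \dim\ker M_{k-1} + 2(3^{k-1}-1),
\]
with base case $\dim\ker M_1 = \dim\ker I_3 = 0$. Telescoping gives
\[
\dim\ker M_k \;=\; \sum_{j=2}^{k}\bigl(2\cdot 3^{j-1}-2\bigr) \;=\; (3^k-3)-2(k-1) \;=\; 3^k-2k-1,
\]
as claimed. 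The only non-routine step is verifying $\mathbf{1}\in\mathrm{rowspan}(M_{k-1})$, but this is immediate from the shape of the top block in the recursive definition, so I do not expect any serious obstacle.
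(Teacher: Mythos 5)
Your proof is correct and follows essentially the same route as the paper's: you decompose $x$ into the three column blocks (the paper's projections $p_{j}$ for $j\in s_{1}$), note that $M_{k}x=0$ amounts to each block lying in $\mathbf{1}^{\perp}$ (the paper's $\ker\alpha_{k-1}$) together with the sum of the blocks lying in $\ker M_{k-1}$, and use $\ker M_{k-1}\subseteq\mathbf{1}^{\perp}$ to get the recursion $\dim\ker M_{k}=\dim\ker M_{k-1}+2(3^{k-1}-1)$. Your explicit justification that $\mathbf{1}$ lies in the row span of $M_{k-1}$ is a nice touch that the paper leaves as ``clear,'' but the argument is otherwise identical.
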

\begin{proof}
Without loss of generality suppose $U$ has type $s$ with $s_{1}=\{0,2,4\}$.
For $x\in\F_{2}^{3^{k}}$ and $j\in s_{1}$ let $p_{j}\left(x\right)$
be the projection onto $\F_{2}^{3^{k-1}}$ of the coordinates $x_{u}$
of $x$ where $u_{1}=j$.

Now $x\in\ker M_{k}$ if and only if $\sum_{j\in s_{1}}p_{j}\left(x\right)\in\ker M_{k-1}$
and $\alpha_{k-1}(p\left(x_{j}\right))=0$ for all $j\in s_{1}$,
with $\alpha_{k-1}:\F_{2}^{3^{k-1}}\rightarrow\F_{2}$ the augmentation
map defined by $v\mapsto\textbf{1}\cdot v$.

It is clear that $\ker M_{k-1}\subset\ker\alpha_{k-1}$. So we have
that 
\[
\ker M_{k}=\left\{ x\mid\sum_{j\in s_{1}}p_{j}\left(x\right)\in\ker M_{k-1},p\left(x_{j}\right)\in\ker\alpha_{k-1}\right\} .
\]
There are $|\ker\alpha_{3^{k}-1}|$ choices for $p_{0}\left(x\right)$
and $p_{2}\left(x\right)$. Then we have 
\[
p_{4}\left(x\right)\in\left(p_{0}\left(x\right)+p_{2}\left(x\right)+\ker M_{k-1}\right)\cap\ker\alpha_{k-1}.
\]
That is $p_{4}\left(x\right)$ belongs to a coset of $\ker M_{k-1}\subset\ker\alpha_{k-1}$,
so there are $|\ker M_{k-1}|$ choices. So we have 
\[
|\ker M_{k}|=|\ker\alpha_{k-1}|^{2}|\ker M_{k-1}|=2^{2(3^{k-1}-1)}|\ker M_{k-1}|.
\]
Clearly $M_{1}$ is the identity map, and so $\ker M_{1}=0$. Then
a simple induction shows that $|\ker M_{k}|=2^{3^{k}-2k-1}$ which
completes the proof. 
\end{proof}
Combining Lemma \ref{lem:sumgamma} and Lemma \ref{thm:kerMk} immediately
proves Proposition \ref{prop:gamma1mod4negative}.

In summary we have shown that 
\begin{equation}
S_{k}\left(X\right)=\frac{1}{2^{5k+1}}\left(\sum_{n<X}\mu^{2}(2n)(3a)^{k\omega(n)}\right)+O\left(X\left(\log X\right)^{(3a)^{k}-a^{k}-1+\epsilon}\right)
\end{equation}
and the first case of Theorem \ref{thm:mainthm3} follows by noting
that 
\begin{align*}
\sum_{n<X}\mu^{2}(2n)(3a)^{k\omega(n)} & =2\sum_{d\in\mathcal{D}_{X,1,4}^{-}}(3a)^{k\omega(d)}+o(X).
\end{align*}

\section{Main results and implications}

We now have the tools to prove Theorems \ref{thm:mainthm-1} and \ref{thm:mainthm2-1}
as corollaries to Theorem \ref{thm:mainthm3}. We restate the theorems
here followed by their proofs: 
\begin{thm}
\label{thm:mainthm-1} Let $\left(G',G\right)=\left(H_{8}\rtimes C_{2},H_{8}\right)$.
For a quadratic field $K$ with discriminant $d$ let $f\left(d\right)$
be the number of unramified everywhere $\left(G',G\right)$-extensions
of $K$. Let $g\left(d\right)=3^{\omega\left(d\right)}$. Then for
all $k\in\mathbb{Z}_{\ge1}$, 
\[
\mathbf{E}^{-}\left(\left(f\left(d\right)/g\left(d\right)\right)^{k}\right)=\left(\frac{1}{32}\right)^{k}
\]
and 
\[
\mathbf{E}^{+}\left(\left(f\left(d\right)/g\left(d\right)\right)^{k}\right)=\left(\frac{1}{192}\right)^{k}
\]
Thus the function $f\left(d\right)/g\left(d\right)$ determines the
point mass distribution at $1/32$ (resp. $1/192$) on $\mathbb{R}$. 
\end{thm}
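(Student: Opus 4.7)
The plan is to deduce this theorem as a direct consequence of Theorem \ref{thm:mainthm3} by specializing the parameter $a$, followed by a routine moment-method argument to upgrade convergence of moments to convergence of measures.

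First I would set $a=1/3$ in Theorem \ref{thm:mainthm3}. With this choice $3a=1$, so every weight $(3a)^{k\omega(d)}$ in the main term collapses to $1$ and the inner sums $\sum_{d\in\mathcal{D}_{X,n,m}^{\pm}}(3a)^{k\omega(d)}$ simplify to the cardinalities $|\mathcal{D}_{X,n,m}^{\pm}|$. Since $a^{\omega(d)}f(d)=f(d)/3^{\omega(d)}=f(d)/g(d)$, the quantities $S_k^{\pm}(X,n,m)$ become exactly the partial $k$-th moments $\sum_{d\in\mathcal{D}_{X,n,m}^{\pm}}(f(d)/g(d))^k$. The error term becomes $O\bigl(X(\log X)^{-a^k-1+\epsilon}\bigr)=o(X)$, so each of these partial moments admits a clean asymptotic of the form (constant)$\cdot|\mathcal{D}_{X,n,m}^{\pm}|$.

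Next I would sum these asymptotics over the three congruence classes $(1,4),(4,8),(0,8)$, which together partition the fundamental discriminants. Using standard density estimates for squarefree integers in each residue class modulo $8$, the class sizes satisfy $|\mathcal{D}_{X,1,4}^{\pm}|:|\mathcal{D}_{X,4,8}^{\pm}|:|\mathcal{D}_{X,0,8}^{\pm}|\sim 4:1:1$ asymptotically. Dividing the combined main terms by $|\mathcal{D}_X^{\pm}|$ and passing to the limit, the coefficients $1/2^{5k}$ and $1/(a^k 2^{5k})$ from Theorem \ref{thm:mainthm3} (and the analogous $1/(3^k 2^{6k})$ and $1/((3a)^k 2^{6k})$ in the real case) weighted by these densities should combine to yield $\mathbf{E}^{\pm}((f/g)^k)=c^k$ with $c=1/32$ in the imaginary case and $c=1/192$ in the real case.

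Finally, to upgrade moment convergence to weak convergence of $\mu_n$ to the point mass $\delta_c$, I would apply a standard moment-method argument. Since $f(d)/g(d)\ge 0$ and the first moment is uniformly bounded, the sequence $\{\mu_n\}$ is tight. Any subsequential weak limit $\mu$ inherits the moments $\int x^k\,d\mu=c^k$ for all $k\ge 1$, so $\int(x-c)^2\,d\mu=c^2-2c\cdot c+c^2=0$ forces $\mu=\delta_c$. Since every subsequential limit equals $\delta_c$, we conclude $\mu_n\Rightarrow\delta_c$.

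The main step requiring care is the numerical bookkeeping in the second paragraph: the coefficients in Theorem \ref{thm:mainthm3} differ between the $(1,4)$ case and the even cases, so producing the clean values $(1/32)^k$ and $(1/192)^k$ requires tracking exactly how each class contributes, weighted by its density. This is ultimately a finite calculation rather than a conceptual obstacle, but it is the one part of the proof where the argument is not purely formal.
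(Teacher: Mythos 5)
Your overall strategy is exactly the paper's: specialize $a=1/3$ in Theorem \ref{thm:mainthmthree}... rather, in Theorem \ref{thm:mainthm3}, and then upgrade convergence of moments to weak convergence by the standard moment method (your tightness-plus-variance argument is fine and is what the paper invokes via a reference to Billingsley). The first and third paragraphs of your proposal are therefore correct and match the paper.

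The second paragraph, however, contains a genuine problem as described. You propose to recover $\mathbf{E}^{\pm}\left(\left(f/g\right)^{k}\right)$ as a density-weighted average (with weights $4:1:1$) of the per-class coefficients coming from Theorem \ref{thm:mainthm3}. But with $a=1/3$ those coefficients are \emph{not} equal across classes: the odd class gives $1/2^{5k}$ while the even classes give $1/(a^{k}2^{5k})=3^{k}/2^{5k}$, and no fixed convex combination of $\left(1/32\right)^{k}$ and $\left(3/32\right)^{k}$ equals $\left(1/32\right)^{k}$ for all $k$. So the computation, executed as you describe it, would not produce the stated answer. The resolution is that the densities of the congruence classes are irrelevant: for even $d$ the normalization $a^{\omega\left(d\right)}$ appearing in $S_{k}^{\pm}$ does not account for the prime $2$, whereas $g\left(d\right)=3^{\omega\left(d\right)}$ does, so $\left(f\left(d\right)/g\left(d\right)\right)^{k}$ equals $3^{-k}\left(a^{\omega\left(d\right)}f\left(d\right)\right)^{k}$ in the even cases; this factor $3^{-k}$ exactly cancels the extra $1/a^{k}=3^{k}$ in the even-case coefficients. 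Consequently every congruence class contributes the same per-field value $\left(1/32\right)^{k}$ (resp.\ $\left(1/192\right)^{k}$), which is the content of the paper's parenthetical ``multiplying by $3^{-k}$ for the even cases.'' You flagged this bookkeeping as the delicate step but left it unresolved, and the way you anticipated it resolving is not the way it actually does.
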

\begin{proof}
This follows immediately from setting $a=1/3$ in Theorem \ref{thm:mainthm3}
(and multiplying by $3^{-k}$ for the even cases). 
\end{proof}
Using this theorem we can show the sequence $f(d)/g(d)$ determines
a distribution in the following sense. Clearly $\mu_{n}\left(U\right)=\frac{1}{\left|\mathcal{D}_{n}^{\pm}\right|}\left|\left\{ f(d)/g(d)\in U\mid d<n\right\} \right|$
is a probability measure on $\mathbb{R}$ for all $n$. Let $c=1/32$.
By Theorem \ref{thm:mainthm-1} $\lim_{n\longrightarrow\infty}\mathbb{E}_{\mu_{n}}\left(x^{k}\right)=c^{k}$
for all $k$. Let $\mu_{c}$ be the point-mass at $c$. The measure
$\mu_{c}$ has moments $c^{k}$ and is determined by its moments.
Then it is a standard fact (see for instance \cite{Billingsley})
that the $\mu_{n}$ converge to $\mu_{c}$ in distribution, as $n\longrightarrow\infty$.

Recall that $H_{8}^{k}\rtimes_{\sigma}C_{2}$ denotes the group where
the action of $\sigma$ on each coordinate gives $H_{8}\rtimes C_{2}$
according to our definition. 
\begin{thm}
\label{thm:mainthm2-1}Let $k\in\mathbb{Z}_{\ge1}$, $G=H_{8}^{k}$,
and $G'=H_{8}^{k}\rtimes_{\sigma}C_{2}$. Define $\Surj_{\sigma}\left(\mathrm{Gal}\left(K^{un}/K\right),G\right)$
be the set of surjections which lift to a surjection $\mathrm{Gal}\left(K^{un}/\Q\right)\rightarrow G'$.
Then 
\[
\sum_{d\in\mathcal{D}_{X}^{-}<X}\left|\Surj_{\sigma}\left(\mathrm{Gal}\left(K^{un}/K\right),G\right)\right|=\left(\frac{1}{4}\right)^{k}\left(\sum_{d\in\mathcal{D}_{X}^{-}}3^{k\omega(d)}\right)+O\left(X\left(\log X\right)^{3^{k}-2+\epsilon}\right)
\]
and 
\[
\sum_{d\in\mathcal{D}_{X}^{+}}\left|\Surj_{\sigma}\left(\mathrm{Gal}\left(K^{un}/K\right),G\right)\right|=\left(\frac{1}{24}\right)^{k}\left(\sum_{d\in\mathcal{D}_{X}^{+}}3^{k\omega(d)}\right)+O\left(X\left(\log X\right)^{3^{k}-2+\epsilon}\right)
\]
\end{thm}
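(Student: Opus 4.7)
The plan is to express $|\Surj_\sigma(\mathrm{Gal}(K^{un}/K), G)|$ in terms of $f(d)^k$ and then invoke Theorem \ref{thm:mainthm3} with $a=1$ to extract the asymptotic. A surjection $\varphi \colon \mathrm{Gal}(K^{un}/K) \to G = H_8^k$ which lifts to a surjection to $G' = H_8^k \rtimes_\sigma C_2$ is determined by its kernel, cutting out an unramified $(G',G)$-extension $L/\mathbb{Q}$, together with a $\sigma$-equivariant identification $\mathrm{Gal}(L/K) \cong H_8^k$. The group of such identifications is $\mathrm{Aut}_{G'}(G) = C_{\mathrm{Aut}(H_8)}(\sigma)^k \rtimes S_k$; a direct computation in $\mathrm{Aut}(H_8) \cong S_4$ using the structure of $G'$ from the introduction shows that $\sigma$ is an inner involution of $H_8$ (a double transposition in $S_4$), so $|C_{\mathrm{Aut}(H_8)}(\sigma)| = 8$ and $|\mathrm{Aut}_{G'}(G)| = 8^k \cdot k!$. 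Writing $f_k(d)$ for the number of unramified $(G',G)$-extensions of $K$, we then obtain $|\Surj_\sigma(\mathrm{Gal}(K^{un}/K), G)| = 8^k \cdot k! \cdot f_k(d)$.

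Next I would identify $k!\, f_k(d)$ with the number of ordered $k$-tuples $(L_1, \dots, L_k)$ of unramified $(H_8 \rtimes C_2, H_8)$-extensions of $K$ whose compositum has Galois group exactly $H_8^k$ over $K$: projecting $H_8^k$ onto each coordinate factor produces such a tuple, the $k!$ orderings account for coordinate labelings, and every such ``independent'' ordered tuple arises this way. The total number of ordered $k$-tuples is $f(d)^k$, and the difference is a ``non-independent'' contribution where some pair $(L_i, L_j)$ shares a non-trivial subextension of $K$.

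The main obstacle is bounding this non-independent contribution. Since the only proper sub-extensions of an $H_8$-extension of $K$ are the three quadratic fields fixed by the three index-$2$ subgroups of $H_8$, each non-independent pair shares an unramified quadratic extension $M$ of $K$. Writing $f_M(d)$ for the number of $(H_8 \rtimes C_2, H_8)$-extensions of $K$ containing $M$, we have $\sum_M f_M(d) = 3 f(d)$, where $M$ ranges over the $|Cl_K[2]| = 2^{\omega(d)-1}$ unramified quadratic extensions of $K$. Cauchy--Schwarz then bounds the per-$d$ non-independent contribution by $O_k(f(d)^k / 2^{\omega(d)})$, and the Selberg--Delange framework of Section \ref{sec:Bounding-the-Error} gives $\sum_{d < X} f(d)^k/2^{\omega(d)} \ll X(\log X)^{(3^k/2)-1}$, well inside the claimed error $O(X(\log X)^{3^k - 2 + \epsilon})$.

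Combining these yields $|\Surj_\sigma(\mathrm{Gal}(K^{un}/K), G)| = 8^k f(d)^k + O(\text{lower order})$. Applying Theorem \ref{thm:mainthm3} at $a=1$ then concludes the proof: in the imaginary case, all congruence-class constants from Theorem \ref{thm:mainthm3} collapse to $1/2^{5k} = 1/32^k$, so $\sum_{d \in \mathcal{D}_X^-} |\Surj_\sigma(\cdot, G)| = (8/32)^k \sum 3^{k\omega(d)} + O(X(\log X)^{3^k - 2 + \epsilon}) = (1/4)^k \sum 3^{k\omega(d)} + O(\cdots)$; in the real case the constants collapse to $1/(3^k 2^{6k}) = 1/192^k$, yielding the factor $8^k/192^k = (1/24)^k$ in the main term.
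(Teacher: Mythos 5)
Your overall architecture is the same as the paper's: reduce $\left|\Surj_{\sigma}\left(\mathrm{Gal}\left(K^{un}/K\right),H_{8}^{k}\right)\right|$ to $8^{k}f(d)^{k}$ plus a correction coming from ``dependent'' tuples, then apply Theorem \ref{thm:mainthm3} at $a=1$; your constants ($8^{k}/32^{k}=1/4^{k}$ and $8^{k}/192^{k}=1/24^{k}$) and the error exponent $3^{k}-2+\epsilon$ are correct. However, your treatment of the correction term has two genuine gaps. First, for $k\ge3$ it is false that a non-independent tuple must contain a pair $(L_{i},L_{j})$ sharing a quadratic subextension: each $L_{i}$ cuts out a $2$-dimensional subspace $V_{i}$ of $\Hom\left(\mathrm{Gal}\left(K^{gen}/K\right),\F_{2}\right)$, and surjectivity of the product map onto $H_{8}^{k}$ fails exactly when $\dim\left(V_{1}+\cdots+V_{k}\right)<2k$ (a Frattini-quotient condition), which can occur through a relation involving three or more of the $V_{i}$ while all pairwise intersections are trivial. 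Second, even for the pairwise terms your Cauchy--Schwarz step runs the wrong way: from $\sum_{M}f_{M}(d)=3f(d)$ one gets the \emph{lower} bound $\sum_{M}f_{M}(d)^{2}\gg f(d)^{2}/2^{\omega(d)}$, whereas the upper bound $O\left(f(d)^{2}/2^{\omega(d)}\right)$ you need would require $\max_{M}f_{M}(d)\ll f(d)/2^{\omega(d)}$, an equidistribution statement among the quadratic subextensions that is not established and does not follow formally. (There is also a group-theoretic slip: $\mathrm{Aut}\left(H_{8}^{k}\right)$ strictly contains $C_{\mathrm{Aut}(H_{8})}(\sigma)^{k}\rtimes S_{k}$ because of central twists such as $(x_{1},x_{2})\mapsto(x_{1}\chi(x_{2}),x_{2})$ with $\chi:H_{8}\rightarrow Z(H_{8})$, so $\left|\mathrm{Aut}_{G'}(G)\right|\neq8^{k}k!$ and $k!\,f_{k}(d)$ is not the number of independent ordered tuples; these two miscounts cancel in the identity relating $\left|\Surj_{\sigma}\right|$ to $8^{k}f(d)^{k}$ minus the dependent tuples, so the main term survives.)

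The paper sidesteps both problems by organizing the correction through the subgroup lattice: it writes $\left|\Surj_{\sigma}\left(G_{d}^{ur},H_{8}^{k}\right)\right|=\sum_{A\le H_{8}^{k}}\mu_{H_{8}^{k}}(A)\left|\Hom_{\sigma}\left(G_{d}^{ur},A\right)\right|$, notes every subgroup is of the form $H_{8}^{j_{1}}\times C_{4}^{j_{2}}\times C_{2}^{j_{3}}$, and thereby expresses every correction term as a product $\left|\Surj_{\sigma}\left(\cdot,H_{8}\right)\right|^{i_{1}}\left|\Surj_{\sigma}\left(\cdot,C_{4}\right)\right|^{i_{2}}\left|\Surj_{\sigma}\left(\cdot,C_{2}\right)\right|^{i_{3}}$ with $i_{1}<k$. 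Since the cyclic factors contribute $O\left(2^{\omega(d)}\right)$ each, Theorem \ref{thm:mainthm3} together with H\"{o}lder gives $\ll X\left(\log X\right)^{3^{i_{1}}2^{k-i_{1}}-1+\epsilon}\ll X\left(\log X\right)^{3^{k}-2+\epsilon}$, with no need to control $\max_{M}f_{M}(d)$. To repair your route you would have to bound $\sum_{d<X}f(d)^{k-2}\sum_{M}f_{M}(d)^{2}$ (and the higher-order analogues for non-pairwise dependence) by a genuine character-sum computation; that is essentially what the paper's M\"{o}bius-inversion step accomplishes.
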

\begin{proof}
As a notational convenience, let $G_{K}^{un}=\mathrm{Gal}\left(K^{un}/K\right)$
throught the proof. Clearly taking $a=1$ and noting that $\#\Surj_{\sigma}\left(G_{d}^{ur},H_{8}\right)=8f(d)$
we see that 
\[
\sum_{|d|<X}\left|\Surj_{\sigma}\left(G_{d}^{ur},H_{8}\right)^{k}\right|=\begin{cases}
\left(\frac{1}{4}\right)^{k}\left(\sum_{\text{odd }|d|<X}3^{k\omega(d)}\right)+O\left(X\left(\log X\right)^{3^{k}-2+\epsilon}\right) & \text{imaginary}\\
\left(\frac{1}{24}\right)^{k}\left(\sum_{\text{odd }|d|<X}3^{k\omega(d)}\right)+O\left(X\left(\log X\right)^{3^{k}-2+\epsilon}\right) & \text{real}
\end{cases}
\]
Consider that we have 
\[
\left|\Surj_{\sigma}\left(G_{d}^{ur},H_{8}^{k}\right)\right|=\sum_{A\le H_{8}^{k}}\mu_{H_{8}^{k}}(A)\left|\Hom_{\sigma}\left(G_{d}^{ur},A\right)\right|
\]
as in \cite{hall}. A simple exercise in group theory shows that all
subgroups of $H_{8}^{k}$ are isomorphic to $H_{8}^{j_{1}}\times C_{4}^{j_{2}}\times C_{2}^{j_{3}}$
for some $j_{1}+j_{2}+j_{3}\le k$. So we have that 
\begin{eqnarray*}
\left|\Surj_{\sigma}\left(G_{d}^{ur},H_{8}^{k}\right)\right|=\sum_{j_{1}+j_{2}+j_{2}\le k} &  & \mu_{H_{8}^{k}}\left(H_{8}^{j_{1}}\times C_{4}^{j_{2}}\times C_{2}^{j_{3}}\right)\\
 &  & \times\left|\Hom_{\sigma}\left(G_{d}^{ur},H_{8}\right)\right|^{j_{1}}\left|\Hom_{\sigma}\left(G_{d}^{ur},C_{4}\right)\right|^{j_{2}}\left|\Hom_{\sigma}\left(G_{d}^{ur},C_{4}\right)\right|^{j_{3}}.
\end{eqnarray*}
A similar argument shows that the $j_{1}=k$ term is the main term,
and then 
\begin{eqnarray*}
\sum_{|d|<X}\left|\Surj_{\sigma}\left(G_{d}^{ur},H_{8}^{k}\right)\right| & = & \sum_{|d|<X}\left|\Hom_{\sigma}\left(G_{d}^{ur},H_{8}\right)\right|^{k}+O\left(X\left(\log X\right)^{3^{k}-2+\epsilon}\right)\\
 & = & \sum_{|d|<X}\left(\left|\Surj_{\sigma}\left(G_{d}^{ur},H_{8}\right)\right|-3\left|\Surj_{\sigma}\left(G_{d}^{ur},C_{4}\right)\right|+2\left|\Surj_{\sigma}\left(G_{d}^{ur},C_{2}\right)\right|\right)^{k}\\
 &  & +O\left(X\left(\log X\right)^{3^{k}-2+\epsilon}\right)\\
 & = & \sum_{|d|<X}\sum_{i_{1}+i_{2}+i_{3}}\binom{k}{i_{1},i_{2},i_{3}}\left|\Surj_{\sigma}\left(G_{d}^{ur},H_{8}\right)\right|^{i_{1}}\\
 &  & \hspace{2cm}\times3^{i_{2}}\left|\Surj_{\sigma}\left(G_{d}^{ur},C_{4}\right)\right|^{i_{2}}2^{i_{3}}\left|\Surj_{\sigma}\left(G_{d}^{ur},C_{2}\right)\right|^{i_{3}}\\
 &  & +O\left(X\left(\log X\right)^{3^{k}-2+\epsilon}\right)
\end{eqnarray*}
Where again a similar argument shows the $i_{1}=k$ term is the main
term, so that 
\begin{eqnarray*}
\sum_{|d|<X}\left|\Surj_{\sigma}\left(G_{d}^{ur},H_{8}^{k}\right)\right| & = & \sum_{|d|<X}\left|\Surj_{\sigma}\left(G_{d}^{ur},H_{8}\right)\right|^{k}+O\left(X\left(\log X\right)^{3^{k}-2+\epsilon}\right).
\end{eqnarray*}
\end{proof}
This additionally answers a conjecture of Wood \cite{Wood} for nonabelian
Cohen-Lenstra heuristcs in the case of $G=H_{8}^{k}$ and $[G':G]=2$,
which says we expect the sum in this corollary to be $\gg X$. 
\begin{cor}
\label{cor:The-density-of-2}The density of quadratic fields $K$
with $\mathrm{Gal}\left(K^{un}/K\right)=H_{8}^{m}$ is equal to $0$
for any positive $m\in\mathbb{Z}$. 
\end{cor}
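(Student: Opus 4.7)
The plan is to deduce this from Theorem \ref{thm:mainthm-1}, which asserts that $f(d)/g(d)$ converges in distribution to a point mass at a positive constant $c$ ($= 1/32$ in the imaginary case, $= 1/192$ in the real case). Convergence to a point mass is equivalent to convergence in probability, so fixing any $0 < \epsilon < c$, the density of fundamental discriminants $d$ with $f(d) \leq (c - \epsilon) \cdot 3^{\omega(d)}$ is $0$.

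Next I would observe that whenever $\mathrm{Gal}(K^{un}/K) \cong H_8^m$, the count $f(d)$ is bounded above by a constant $N_m$ depending only on $m$. Indeed, every unramified everywhere $(H_8 \rtimes C_2, H_8)$-extension $L/K$ corresponds to a normal subgroup of $\mathrm{Gal}(K^{un}/K) \cong H_8^m$ with quotient isomorphic to $H_8$, and the number of such subgroups is a finite group-theoretic invariant of $H_8^m$ (we do not need its precise value). Combining this with the previous paragraph, any $K$ satisfying $\mathrm{Gal}(K^{un}/K) \cong H_8^m$ either lies in the density-$0$ exceptional set above or satisfies $N_m \geq (c - \epsilon) \cdot 3^{\omega(d)}$, the latter forcing $\omega(d)$ to be bounded in terms of $m$ and $\epsilon$.

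The proof concludes by invoking the classical fact (ultimately a consequence of Hardy--Ramanujan) that for any fixed $B$, the set of integers $n$ with $\omega(n) \leq B$ has natural density $0$; restricting to fundamental discriminants does not change this. The two exceptional sets therefore each have density $0$, and their union contains every quadratic field with $\mathrm{Gal}(K^{un}/K) \cong H_8^m$.

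I do not anticipate any genuine obstacle here: the heavy lifting has already been done in Theorem \ref{thm:mainthm-1}, and the corollary amounts to the observation that the normalization $g(d) = 3^{\omega(d)}$ grows without bound on a density-$1$ subset of discriminants, which rules out any bounded profile for $f(d)$ on a set of positive density. The only point requiring any care is the group-theoretic upper bound $f(d) \leq N_m$, but this is immediate from the definition of $f$ and the fact that a finite group has only finitely many normal subgroups of each index.
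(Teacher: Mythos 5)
Your proof is correct, but it takes a genuinely different route from the paper's. The paper deduces the corollary from Theorem \ref{thm:mainthm2-1}: rerunning that argument with $a=1/3$ shows that $\left|\Surj_{\sigma}\left(\mathrm{Gal}\left(K^{un}/K\right),H_{8}^{m'}\right)\right|/g(d)^{m'}$ has point-mass distribution at a positive constant for \emph{every} $m'$, so one hundred percent of quadratic fields admit an unramified $H_{8}^{m'}$-extension; taking $m'=m+1$ rules out $\mathrm{Gal}\left(K^{un}/K\right)\cong H_{8}^{m}$ outside a density-zero set, since $H_{8}^{m}$ has no quotient isomorphic to $H_{8}^{m+1}$. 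You instead use only the $m=1$ statement (Theorem \ref{thm:mainthm-1}), the trivial group-theoretic bound $f(d)\le N_{m}$ valid whenever the Galois group is the fixed finite group $H_{8}^{m}$, and the Landau/Hardy--Ramanujan fact that $\left\{ d:\omega(d)\le B\right\} $ has density zero --- a fact the paper itself invokes elsewhere, so it is available. Both arguments are sound; the paper's buys the stronger positive statement that almost all quadratic fields admit unramified $H_{8}^{m}$-extensions for each $m$, while yours is lighter (no need to redo the $H_{8}^{k}$ M\"obius-inversion argument with $a=1/3$) and actually proves more in another direction: the same reasoning gives density zero for $\mathrm{Gal}\left(K^{un}/K\right)\cong G_{0}$ for \emph{any} fixed finite group $G_{0}$, since $f(d)$ is then bounded by the (finite) number of normal subgroups of $G_{0}$ with quotient $H_{8}$ while $g(d)=3^{\omega(d)}$ is unbounded on a density-one set. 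One small caution: Theorem \ref{thm:mainthm} is stated for odd discriminants, and Theorem \ref{thm:mainthm3} shows the limiting constant actually differs between the congruence classes $d\equiv1\bmod4$ and $d\equiv0,4\bmod8$; your argument is unaffected, since all you need is that on each of finitely many classes the ratio $f(d)/g(d)$ concentrates at some positive constant, but you should phrase the first step class-by-class rather than citing a single $c$.
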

\begin{proof}
Repeating the proof of Theorem \ref{thm:mainthm2-1} with $a=1/3$
gives 
\begin{eqnarray*}
\sum_{|d|<X}\frac{\left|\Surj_{\sigma}\left(G_{d}^{ur},H_{8}^{m}\right)\right|}{g\left(d\right)^{m}} & = & \sum_{|d|<X}\left(\frac{\left|\Surj_{\sigma}\left(G_{d}^{ur},H_{8}\right)\right|}{g\left(d\right)}\right)^{m}+o\left(X\right).
\end{eqnarray*}
\[
\]
for any $m$, from which it is clear that the $k$th moments of the
function $\left|\Surj_{\sigma}\left(G_{d}^{ur},H_{8}^{m}\right)\right|/g\left(d\right)^{m}$
will be $m$th powers of the $k$th moments of $\left|\Surj_{\sigma}\left(G_{d}^{ur},H_{8}\right)\right|/g\left(d\right)$,
which are given by Theorem \ref{thm:mainthm-1}. Thus the distribution
of the values of $\left|\Surj_{\sigma}\left(G_{d}^{ur},H_{8}^{m}\right)\right|/g\left(d\right)^{m}$
will again by a point mass supported at some positive real number
$c$. By definition this means that for any fixed $m\in\mathbb{Z}$
and $\epsilon>0$, one hundred percent of quadratic fields satisfy
$\left|\left|\Surj_{\sigma}\left(G_{d}^{ur},H_{8}^{m}\right)\right|/g\left(d\right)^{m}-c\right|<\epsilon$.
In particular $\left|\Surj_{\sigma}\left(G_{d}^{ur},H_{8}^{m}\right)\right|$
is non-zero one hundred percent of the time, which means there is
at least one $H_{8}^{m}$ extension. But this holds for any $m$.
The corollary follows. 
\end{proof}

\section{\label{sec:Future-Work}Future work and conjectures}

As mentioned in the introduction \cite{lemmermeyer2groups} contains
conditions for the existence of unramified $\left(G',G\right)$-extensions
in several other cases. In the case of $\left(G',G\right)=(D_{4}\times C_{2},D_{4})$
a proof similar to Proposition \ref{prop:The-number-of} gives the
formula

\begin{eqnarray*}
f_{(D_{4}\times C_{2},D_{4})}(d) & = & \frac{1}{2}\sum_{d=d_{1}d_{2}d_{3}}\frac{\prod_{i}2^{\omega(d_{i})-1}}{2^{\omega(d)}}2^{\omega\left(d_{3}\right)}\prod_{p\mid d_{1}}\left(1+\left(\frac{d_{2}}{p}\right)\right)\prod_{p\mid d_{2}}\left(1+\left(\frac{d_{1}}{p}\right)\right).
\end{eqnarray*}

Our choice of normalizing constant in Conjecture \ref{conj:For-a-quadratic}
is based on the following simple heuristic. Consider the case $(D_{4}\times C_{2},D_{4})$.
If we assume each residue symbol takes values $\pm1$ with probability
$1/2$ then the expected value of the product is 
\[
\left(\frac{1}{2}\right)^{\omega\left(d_{1}d_{2}\right)}\cdot2^{\omega\left(d_{1}d_{2}\right)}+\left(1-\left(\frac{1}{2}\right)^{\omega\left(d\right)}\right)\cdot0=1
\]
Hence on average one expects $f$ to be on the order of 
\begin{eqnarray*}
\sum_{d=d_{1}d_{2}d_{3}}2^{\omega\left(d_{3}\right)} & = & \sum_{i=0}^{\omega\left(d\right)}2^{\omega\left(i\right)}2^{\omega\left(d\right)-i}\binom{\omega\left(d\right)}{i}\\
 & = & 4^{\omega\left(d\right)}.
\end{eqnarray*}
The heuristic applies similarly to all such expressions.

\section{Appendix}

We maintain all the notation from the previous sections, notably the
matrix $M_{k}$.

\subsection{The case $d<0$ and $d\equiv4\mod8$}

Next we consider fundamental discriminants $d<0$ and $d\equiv4$
mod $8$. The number of $H_{8}$ extensions of a quadratic field $k$
Galois over $\Q$ with such a discriminant is 
\[
f\left(d\right)=f_{1}\left(d\right)-f_{2}\left(d\right)+2^{\omega(d)-4}
\]
where we define

\begin{eqnarray*}
f_{1}\left(d\right) & = & \frac{1}{2}\sum_{d=d_{1}d_{2}d_{3}}\left(1+\Leg{d_{2}d_{3}}{2}\right)\frac{\prod_{i}2^{\omega\left(d_{i}\right)-1}}{2^{\omega\left(d\right)}}\prod_{p\mid d_{3}}\left(1+\left(\frac{-d_{1}d_{2}}{p}\right)\right)\\
 &  & \times\prod_{p\mid d_{1}}\left(1+\left(\frac{d_{2}d_{3}}{p}\right)\right)\prod_{p\mid d_{2}}\left(1+\left(\frac{-d_{3}d_{1}}{p}\right)\right)\\
 &  & +\sum_{d=d_{1}d_{2}d_{3}}\left(1+\Leg{-d_{2}d_{3}}{2}\right)\frac{\prod_{i}2^{\omega\left(d_{i}\right)-1}}{2^{\omega\left(d\right)}}\prod_{p\mid d_{3}}\left(1+\left(\frac{-d_{1}d_{2}}{p}\right)\right)\\
 &  & \times\prod_{p\mid d_{1}}\left(1+\left(\frac{-d_{2}d_{3}}{p}\right)\right)\prod_{p\mid d_{2}}\left(1+\left(\frac{d_{3}d_{1}}{p}\right)\right)\\
\end{eqnarray*}
and 
\begin{eqnarray*}
f_{2}\left(d\right) & = & \sum_{d=d_{1}d_{2}}\frac{\prod_{i}2^{\omega\left(d_{i}\right)-1}}{2^{\omega\left(d\right)}}\left(1+\Leg{d_{2}}{2}\right)\prod_{p\mid d_{1}}\left(1+\left(\frac{d_{2}}{p}\right)\right)\prod_{p\mid d_{2}}\left(1+\left(\frac{-d_{1}}{p}\right)\right)\\
 &  & +\sum_{d=d_{1}d_{2}}\frac{\prod_{i}2^{\omega\left(d_{i}\right)-1}}{2^{\omega\left(d\right)}}\left(1+\Leg{-d_{2}}{2}\right)\prod_{p\mid d_{1}}\left(1+\left(\frac{-d_{2}}{p}\right)\right)\prod_{p\mid d_{2}}\left(1+\left(\frac{-d_{1}}{p}\right)\right)\\
\end{eqnarray*}
where the factoriation $d=d_{1}d_{2}d_{3}$ is into integers such
that $d_{1}\equiv-1\mod4$ and $d_{i}\equiv1\mod4$, and each sum
corresponds to $d_{1}<0$ and $d_{i}<0$ for $i\ne1$. Otherwise this
follows from the same reasoning as in the previous subsections. As
before this gives 
\begin{eqnarray*}
f_{1}\left(d\right) & = & \frac{1}{2^{3}}\sum_{m=0}^{1}\sum_{d=-4D_{0}D_{1}D_{2}D_{3}D_{4}D_{5}}\left(1+\Leg{D_{1}D_{2}D_{4}D_{5}}{2}\right)\left(\frac{1}{2}\Leg{-1}{D_{2}D_{4}}m+\Leg{-1}{D_{0}D_{4}}(1-m)\right)\\
 &  & \times\left(\frac{D_{0}D_{3}D_{2}D_{5}}{D_{4}}\right)\left(\frac{D_{2}D_{5}D_{4}D_{1}}{D_{0}}\right)\left(\frac{D_{4}D_{1}D_{0}D_{3}}{D_{2}}\right)
\end{eqnarray*}
and 
\begin{eqnarray*}
f_{2}\left(d\right)=\frac{1}{2^{3}}\sum_{m=0}^{1}\sum_{d=-4E_{0}E_{1}E_{2}E_{3}}\left(1+\Leg{E_{2}E_{3}}{2}\right)\left(\Leg{-1}{E_{2}}m+\Leg{-1}{E_{0}}(1-m)\right)\left(\frac{E_{2}E_{3}}{E_{0}}\right)\left(\frac{E_{0}E_{1}}{E_{2}}\right).
\end{eqnarray*}
where the sum is over factorizations which satisfy the congruences
\begin{eqnarray*}
D_{0}D_{3} & \equiv & (-1)^{m+1}\mod4\\
D_{2}D_{5} & \equiv & (-1)^{m+1}\mod4\\
D_{4}D_{1} & \equiv & 1\mod4
\end{eqnarray*}
\begin{eqnarray*}
E_{0}E_{1} & \equiv & (-1)^{m+1}\mod4\\
E_{2}E_{3} & \equiv & (-1)^{m+1}\mod4
\end{eqnarray*}
As before we compute 
\begin{eqnarray*}
f_{j_{1},j_{2}}\left(d\right) & = & \frac{1}{2^{3j_{1}+3j_{2}}}\sum_{\left(D_{u}\right)}\sum_{C\subset\{1,...,j_{1}+j_{2}\}}\prod_{u}\Leg{D_{u}}{2}^{\sum_{i\in C}Q(u_{i})}\sum_{J_{1}\subset\{1,...,j_{1}\}}\sum_{J_{2}\subset\{j_{1}+1,...,j_{1}+j_{2}\}}\frac{1}{2^{|J_{1}|}}\\
 &  & \times\Leg{-1}{D_{u}}^{\sum_{i=1}^{j_{1}+j_{2}}\lambda^{1}(u_{i})\chi_{J_{1}}(i)+\lambda^{2}(u_{i})(1-\chi_{J_{1}}(u_{i}))+\gamma^{1}(u_{i})\chi_{J_{2}}(i)+\gamma^{2}(u_{i})(1-\chi_{J_{2}}(i))}\\
 &  & \times\prod_{u,v}\left(\frac{D_{u}}{D_{v}}\right)^{\Phi_{j_{1}}\left(u,v\right)+\Psi_{j_{2}}\left(u,v\right)}
\end{eqnarray*}
For $Q(u)=0$ if $u\in\{0,3\}$ and $1$ otherwise, $\lambda^{1}(u)=1$
iff $u=2,4$, $\lambda^{2}(u)=1$ iff $u=0,4$, $\gamma^{1}(u)=1$
iff $u=2$, and $\gamma^{2}(u)=1$ iff $u=0$. The sum is over $6^{j_{1}}4^{j_{2}}$
tuples of integers $\left(D_{u}\right)$ which satisfy $\prod_{u}D_{u}=d$
and the congruence conditions: for all $1\le i\le j_{1}$ and $\left(u_{i},v_{i}\right)\in\left\{ \left(0,3\right),\left(2,5\right),\left(4,1\right)\right\} $
and all $j_{1}+1\le i\le j_{1}+j_{2}$ and $\left(u_{i},v_{i}\right)\in\left\{ \left(0,1\right),\left(2,3\right)\right\} $

\begin{eqnarray}
\prod_{u}D_{u}\prod_{v}D_{v} & \equiv & \begin{cases}
(-1)^{|J_{1}|+1}\mod4 & (u_{i},v_{i})=(0,3),(2,5),i\le j_{1}\\
(-1)^{|J_{2}|+1}\mod4 & i\ge j_{2}\\
1\mod4 & \text{else}
\end{cases}
\end{eqnarray}
where the above products are over all $u$ with $u_{i}$ in the $i$th
position and all $v$ with $v_{i}$ in the $i$th position.

Thus multiplying by $2^{kj_{3}\omega(d)}a^{k\omega\left(d\right)}$
and summing over discriminants $d<0$ with $d\equiv1\mod4$ we get
\begin{eqnarray*}
\sum_{d<X}2^{j_{3}(\omega(d))}a^{\omega\left(d\right)}f_{j_{1},j_{2}}\left(d\right) & = & \frac{1}{3^{j_{1}}2^{3j_{1}+3j_{2}}}\sum_{\left(D_{u}\right)}\mu^{2}\left(\prod_{u}D_{u}\right)a^{k\omega\left(d\right)}2^{kj_{3}\omega(d)}\\
 &  & \times\sum_{C\subset\{1,...,j_{1}+j_{2}\}}\sum_{J_{1}\subset\{1,...,j_{1}\}}\sum_{J_{2}\subset\{j_{1}+1,...,j_{1}+j_{2}\}}\prod_{u}\Leg{D_{u}}{2}^{\sum_{i\in C}Q(u_{i})}\\
 &  & \times\frac{1}{2^{|J_{1}|}}\Leg{-1}{D_{u}}^{\sum_{i=1}^{j_{1}+j_{2}}\lambda^{1}(u_{i})\chi_{J_{1}}(i)+\lambda^{2}(u_{i})(1-\chi_{J_{1}}(u_{i}))+\gamma^{1}(u_{i})\chi_{J_{2}}(i)+\gamma^{2}(u_{i})(1-\chi_{J_{2}}(i))}\\
 &  & \times\prod_{u,v}\left(\frac{D_{u}}{D_{v}}\right)^{\Phi_{j_{1}}\left(u,v\right)+\Psi_{j_{2}}\left(u,v\right)}
\end{eqnarray*}
where the sum is over $6^{j_{1}}4^{j_{2}}$ tuples of integers $\left(D_{u}\right)$
which satisfy $\prod_{u}D_{u}<X$ and the conditions $\left(\ref{eq:congruence2}\right)$.

In this case, we have 
\begin{eqnarray*}
\sum_{\mathbf{A}\mathrm{\ admissible\ for\ }\mbox{\ensuremath{\mathcal{U}}}}S_{k,0,0}\left(X,\mathbf{A}\right) & = & \frac{1}{2^{3k}}\left(\sum_{\left(D_{u}\right)}\mu^{2}\left(d/2\right)a^{k\omega\left(d/4\right)}\right)\\
 &  & \times\sum_{\left(h_{u}\right)}\left[\sum_{C\subset\{1,...,k\}}\prod_{u}\left(-1\right)^{\sum_{i}Q(u_{i})\chi_{C}(i)\frac{h_{u}^{2}-1}{8}}\right]\\
 &  & \sum_{J\subset\{1,...,k\}}\frac{1}{2^{|J|}}\prod_{u}(-1)^{\frac{h_{u}-1}{2}\sum_{i=1}^{k}\lambda^{1}(u_{i})\chi_{J}(i)+\lambda^{2}(u_{i})(1-\chi_{J}(u_{i}))}\\
 &  & \times\left[\prod_{u,v}\left(-1\right)^{\Phi_{k}\left(u,v\right)\frac{h_{u}-1}{2}\frac{h_{v}-1}{2}}\right]+O\left(X\left(\log X\right)^{(3a)^{k}-a^{k}-1+\epsilon}\right).
\end{eqnarray*}

For $h_{u}$ the congruence class of $D_{u}\mod8$ and $d=4\prod D_{u}$
and we grouped the $4$ factor with the first discriminant in the
factorization (i.e. for $k=1$ the factorization is $d=(4D_{0}D_{3})(D_{1}D_{4})(D_{2}D_{5})$).

Then, noting that $h_{u}$ is one out of four choices for odd numbers
$\mod8$, we get 
\begin{eqnarray*}
S_{k,0,0}(X) & = & \frac{1}{2^{3k}2^{2\cdot3^{k}}}\left(\sum_{\mathcal{U}}\gamma(\mathcal{U})\right)\left(\sum_{4n<X}\mu^{2}(2n)(3a)^{k\omega(n)}\right)+O\left(X(\log X)^{(3a)^{k}-a^{k}-1+\epsilon}\right)
\end{eqnarray*}
where we define 
\begin{eqnarray*}
\gamma(\mathcal{U}) & = & \sum_{\left(h_{u}\right)}\left[\sum_{C\subset\{1,...,k\}}\prod_{u}\left(-1\right)^{\sum_{i}Q(u_{i})\chi_{C}(i)\frac{h_{u}^{2}-1}{8}}\right]\left[\prod_{u,v}\left(-1\right)^{\Phi_{k}\left(u,v\right)\frac{h_{u}-1}{2}\frac{h_{v}-1}{2}}\right]\\
 &  & \times\left[\sum_{J\subset\{1,...,k\}}\frac{1}{2^{|J|}}\prod_{u}(-1)^{\frac{h_{u}-1}{2}\sum_{i=1}^{k}\lambda^{1}(u_{i})\chi_{J}(i)+\lambda^{2}(u_{i})(1-\chi_{J}(u_{i}))}\right]
\end{eqnarray*}
allowing odd congruence classes $h_{u}\mod8$ satisfying the following
conditions: for all $1\le i\le k$ 
\begin{eqnarray}
\prod_{u}h_{u}\prod_{v}h_{v}\equiv\begin{cases}
1\mod4 & (u_{i},v_{i})\in\{(1,4)\}\\
(-1)^{|J|+1}\mod4 & (u_{i},v_{i})\in\{(0,3),(2,5)\}
\end{cases}
\end{eqnarray}
where the above products are taken over all $u$ with $u_{i}$ in
the $i$th position and $v$ with $v_{i}$ in the $i$th position.
(Recalling that the odd parts of even discriminants are $-1\mod4$
if $8\nmid d$). If we call $x\equiv\left(\frac{h_{u}-1}{2}\right)\mod2$
an element of $\F_{2}^{3^{k}}$, then $x\in y+\ker M_{k}$ a coset
of $\ker M_{k}$.

Now consider that $\Phi(u,v)=0$ if $u,v\in A=\{1,3,5\}$. $\mathcal{U}$
is a largest maximal unlinked set, and so has a type $s\in S$, so
it follows that 
\[
\Phi_{k}(u,v)=\sum_{i}\Phi(u_{i},v_{i})=\sum_{i:s_{i}=B}\Phi(u_{i},v_{i})
\]
This way we show that 
\begin{eqnarray*}
\sum_{\mathcal{U}}\gamma(\mathcal{U}) & = & \sum_{s\in S}\sum_{x\in y+\ker M_{k}}\left[\sum_{(h_{u}):\frac{h_{u}-1}{2}\equiv x_{u}\mod2}\sum_{C\subset\{1,...,k\}}\prod_{u}\left(-1\right)^{\sum_{i\in C}Q(u_{i})\left(\frac{h_{u}^{2}-1}{8}\right)}\right]\\
 &  & \times\left[\prod_{\{u,v\}}\left(-1\right)^{\sum_{s_{i}=B}\Phi(u_{i},v_{i})x_{u}x_{v}}\right]\\
 &  & \times\left[\sum_{J\subset\{1,...,k\}}\frac{1}{2^{|J|}}\prod_{u}(-1)^{x_{u}\sum_{i=1}^{k}\lambda^{1}(u_{i})\chi_{J}(i)+\lambda^{2}(u_{i})(1-\chi_{J}(u_{i}))}\right]
\end{eqnarray*}
Notice that for each $x_{u}$, there are two choices of $h_{u}\mod8$
such that $\frac{h_{u}-1}{2}\equiv x_{u}\mod2$, because $h_{u}$
and $5h_{u}$ give the same image. If we fix an $(h(x)_{u})$ satisfying
this property without loss of generality also satisfying $\frac{h_{u}^{2}-1}{8}\equiv0\mod2$,
then we have 
\begin{align*}
\sum_{(h_{u}):\frac{h_{u}-1}{2}\equiv x_{u}\mod2}\sum_{C\subset\{1,...,k\}}\prod_{u}\left(-1\right)^{\sum_{i\in C}Q(u_{i})\left(\frac{h_{u}^{2}-1}{8}\right)} & =\sum_{T\subset\mathcal{U}}\sum_{C\subset\{1,...,k\}}\prod_{u}\left(-1\right)^{\sum_{i\in C}Q(u_{i})\left(\frac{(h_{u}5^{\chi_{T}(u)})^{2}-1}{8}\right)}\\
 & =\sum_{T\subset\mathcal{U}}\sum_{C\subset\{1,...,k\}}\prod_{u}\left(-1\right)^{\sum_{i\in C}Q(u_{i})\left(\frac{5^{2\chi_{T}(u)}-1}{8}\right)}
\end{align*}
Now, $\frac{5^{2}-1}{8}\equiv1\mod2$. So it follows that $\left(\frac{5^{2\chi_{T}(u)}-1}{8}\right)\equiv(\chi_{T}(u))\mod2$
so we have 
\begin{align*}
\sum_{(h_{u}):\frac{h_{u}-1}{2}\equiv x_{u}\mod2}\sum_{C\subset\{1,...,k\}}\prod_{u}\left(-1\right)^{\sum_{i\in C}Q(u_{i})\left(\frac{h_{u}^{2}-1}{8}\right)} & =\sum_{T\subset\mathcal{U}}\sum_{C\subset\{1,...,k\}}\prod_{u}\left(-1\right)^{\sum_{i\in C}Q(u_{i})\chi_{T}(u)}\\
 & =\sum_{T\subset\mathcal{U}}\sum_{C\subset\{1,...,k\}}\left(-1\right)^{\sum_{u,i}Q(u_{i})\chi_{C}(i)\chi_{T}(u)}\\
 & =\sum_{C\subset\{1,...,k\}}\prod_{u}\left(1+(-1)^{\sum_{i}Q(u_{i})\chi_{C}(i)}\right)
\end{align*}
By using the binomial theorem in two different ways. Now, for $s_{i}$
being $A$ or $B$, we can find a $u\in\mathcal{U}$ such that $\sum_{i\in C}Q(u_{i})\equiv1\mod2$
for any $C\ne\emptyset$, by making $u\in\{0,3\}$ for all except
one value of $i\in C$, and $u\in\{1,2,4,5\}$ for exactly one such
value. So we have the only nonzero term in this sum is for $C=\emptyset$,
which must be equal to $2^{3^{k}}$ independent of the type.

So then we have 
\begin{eqnarray*}
\sum_{\mathcal{U}}\gamma(\mathcal{U}) & = & 2^{3^{k}}\sum_{s\in S}\sum_{x\in y+\ker M_{k}}\left[\prod_{\{u,v\}}\left(-1\right)^{\sum_{s_{i}=B}\Phi(u_{i},v_{i})x_{u}x_{v}}\right]\\
 &  & \times\left[\sum_{J\subset\{1,...,k\}}\frac{1}{2^{|J|}}\prod_{u}(-1)^{x_{u}\sum_{i=1}^{k}\lambda^{1}(u_{i})\chi_{J}(i)+\lambda^{2}(u_{i})(1-\chi_{J}(u_{i}))}\right]\\
 & = & 2^{3^{k}}\sum_{s\in S}\sum_{x\in y+\ker M_{k}}(-1)^{\sum_{\{u,v\}}\sum_{i:s_{i}=B}\Phi(u_{i},v_{i})x_{u}x_{v}}\\
 &  & \times\left[\sum_{J\subset\{1,...,k\}}\frac{1}{2^{|J|}}(-1)^{\sum_{u}x_{u}\sum_{i=1}^{k}\lambda^{1}(u_{i})\chi_{J}(i)+\lambda^{2}(u_{i})(1-\chi_{J}(u_{i}))}\right]
\end{eqnarray*}
Call $\mathcal{U}_{B}=B^{k}$. Then we can write every $\mathcal{U}=\{(s,u):u\in\mathcal{U}_{B}\}$
where clearly $\Phi((s,u),(s,v))=\sum_{i:s_{i}=B}\Phi(u_{i},v_{i})$.
The same holds for $\lambda^{1},\lambda^{2}$ similarly. We can apply
the binomial theorem to the sum over types $s\in S$ to show 
\begin{eqnarray*}
\sum_{\mathcal{U}}\gamma(\mathcal{U}) & = & 2^{3^{k}}\sum_{x\in y+\ker M_{k}}\sum_{J\subset\{1,...,k\}}\frac{1}{2^{|J|}}\\
 &  & \times\prod_{j=1}^{k}\left(1+(-1)^{\sum_{\{u,v\}\subset\mathcal{U}_{B}}\Phi(u_{j},v_{j})x_{u}x_{v}+\sum_{u\in\mathcal{U}_{B}}x_{u}\lambda^{1}(u_{j})\chi_{J}(j)+x_{u}\lambda^{2}(u_{j})(1-\chi_{J}(j))}\right)
\end{eqnarray*}
Notice we have 
\begin{align*}
\sum_{\{u,v\}\subset\mathcal{U}_{B}}\Phi(u_{1},v_{1})x_{u}x_{v} & =\sum_{u_{1}=0,v_{1}=2}x_{u}x_{v}+\sum_{u_{1}=0,v_{1}=4}x_{u}x_{v}+\sum_{u_{1}=2,v_{1}=4}x_{u}x_{v}\\
 & =\sum_{u_{1}=0}x_{u}\sum_{v_{1}=2}x_{v}+\sum_{u_{1}=0}x_{u}\sum_{v_{1}=4}x_{v}+\sum_{u_{1}=2}x_{u}\sum_{v_{1}=4}x_{v}
\end{align*}
\begin{align*}
\sum_{u\in\mathcal{U}_{B}}x_{u}\lambda^{1}(u_{1})\chi_{J}(1)+x_{u}\lambda^{2}(u_{1})(1-\chi_{J}(1)) & =\begin{cases}
\sum_{u_{1}=2}x_{u}+\sum_{u_{1}=4}x_{u} & 1\in J\\
\sum_{u_{1}=0}x_{u}+\sum_{u_{1}=4}x_{u} & 1\not\in J
\end{cases}
\end{align*}
Notice that we have $\sum_{u_{1}=m}x_{u}\equiv|J|+1\mod2$ if $m=0,2$
and $\equiv0\mod2$ otherwise. It then follows that 
\begin{align*}
\sum_{\{u,v\}\subset\mathcal{U}_{B}}\Phi(u_{1},v_{1})x_{u}x_{v}=(|J|+1)^{2}\equiv|J|+1\mod2\\
\sum_{u\in\mathcal{U}_{B}}x_{u}\lambda^{1}(u_{1})\chi_{J}(1)+x_{u}\lambda^{2}(u_{1})(1-\chi_{J}(1))=|J|+1
\end{align*}
Noting that squaring integers mod 2 does nothing. By symmetry the
same is true for all $j=1,...,k$, so we now have 
\begin{align*}
\sum_{\mathcal{U}}\gamma(\mathcal{U}) & =2^{3^{k}}\sum_{x\in y+\ker M_{k}}\sum_{J\subset\{1,...,k\}}\frac{1}{2^{|J|}}\prod_{j=1}^{k}\left(1+(-1)^{0}\right)\\
 & =2^{3^{k}}\sum_{x\in y+\ker M_{k}}2^{k}\sum_{J\subset\{1,...,k\}}\frac{1}{2^{|J|}}\\
 & =2^{\dim\ker M_{k}+3^{k}+k}\left(\frac{3}{2}\right)^{k}
\end{align*}
Recall that $\dim M_{k}=3^{k}-2k-1$. So we have 
\begin{align*}
\sum_{\mathcal{U}}\gamma(\mathcal{U}) & =2^{2\cdot3^{k}-2k-1}3^{k}
\end{align*}
Thus implying 
\begin{align*}
S_{k,0,0}(X) & =\frac{3^{k}}{2^{5k+1}}\left(\sum_{4n<X}\mu^{2}(2n)(3a)^{k\omega(n)}\right)+O\left(X(\log X)^{(3a)^{k}-a^{k}-1+\epsilon}\right)
\end{align*}

We then have that 
\begin{align*}
\sum_{4n<X}\mu^{2}(2n)(3a)^{k\omega(n)}=\frac{2}{(3a)^{k}}\sum_{d\in\mathcal{D}_{4,8,X}^{-}}(3a)^{k\omega(d)}
\end{align*}
implies the result.

\subsection{The case $d<0$ and $d\equiv0$ mod $8$}

Next we consider fundamental discriminants $d<0$ and $d\equiv4$
mod $8$. The number of $H_{8}$ extensions of a quadratic field $k$
Galois over $\Q$ with such a discriminant is 
\[
f\left(d\right)=f_{1}\left(d\right)-f_{2}\left(d\right)+2^{\omega(d)-4}
\]
where we define

\begin{eqnarray*}
f_{1}\left(d\right) & = & \frac{1}{2}\sum_{d=d_{1}d_{2}d_{3}}\left(1+\Leg{d_{2}d_{3}}{2}\right)\frac{\prod_{i}2^{\omega\left(d_{i}\right)-1}}{2^{\omega\left(d\right)}}\prod_{p\mid d_{3}}\left(1+\left(\frac{-2d_{1}d_{2}}{p}\right)\right)\\
 &  & \times\prod_{p\mid d_{1}}\left(1+\left(\frac{d_{2}d_{3}}{p}\right)\right)\prod_{p\mid d_{2}}\left(1+\left(\frac{-2d_{3}d_{1}}{p}\right)\right)\\
 &  & +\sum_{d=d_{1}d_{2}d_{3}}\left(1+\Leg{-d_{2}d_{3}}{2}\right)\frac{\prod_{i}2^{\omega\left(d_{i}\right)-1}}{2^{\omega\left(d\right)}}\prod_{p\mid d_{3}}\left(1+\left(\frac{-2d_{1}d_{2}}{p}\right)\right)\\
 &  & \times\prod_{p\mid d_{1}}\left(1+\left(\frac{-d_{2}d_{3}}{p}\right)\right)\prod_{p\mid d_{2}}\left(1+\left(\frac{2d_{3}d_{1}}{p}\right)\right)\\
\end{eqnarray*}
and 
\begin{eqnarray*}
f_{2}\left(d\right) & = & \sum_{d=d_{1}d_{2}}\frac{\prod_{i}2^{\omega\left(d_{i}\right)-1}}{2^{\omega\left(d\right)}}\left(1+\Leg{d_{2}}{2}\right)\prod_{p\mid d_{1}}\left(1+\left(\frac{d_{2}}{p}\right)\right)\prod_{p\mid d_{2}}\left(1+\left(\frac{-2d_{1}}{p}\right)\right)\\
 &  & +\sum_{d=d_{1}d_{2}}\frac{\prod_{i}2^{\omega\left(d_{i}\right)-1}}{2^{\omega\left(d\right)}}\left(1+\Leg{-d_{2}}{2}\right)\prod_{p\mid d_{1}}\left(1+\left(\frac{-d_{2}}{p}\right)\right)\prod_{p\mid d_{2}}\left(1+\left(\frac{2d_{1}}{p}\right)\right)\\
\end{eqnarray*}
where the factoriation $d=d_{1}d_{2}d_{3}$ is into integers such
that $d_{1}\equiv-1\mod4$ and $d_{i}\equiv1\mod4$, and each sum
corresponds to $d_{1}<0$ and $d_{i}<0$ for $i\ne1$. Otherwise this
follows from the same reasoning as in the previous subsections. As
before this gives 
\begin{eqnarray*}
f_{1}\left(d\right) & = & \frac{1}{2^{4}}\sum_{m=0}^{1}\sum_{d=-4D_{0}D_{1}D_{2}D_{3}D_{4}D_{5}}\left(1+\Leg{D_{1}D_{2}D_{4}D_{5}}{2}\right)\Leg{2}{D_{2}D_{4}}\\
 &  & \times\left(\frac{1}{2}\Leg{-1}{D_{2}D_{4}}m+\Leg{-1}{D_{0}D_{4}}(1-m)\right)\\
 &  & \times\left(\frac{D_{0}D_{3}D_{2}D_{5}}{D_{4}}\right)\left(\frac{D_{2}D_{5}D_{4}D_{1}}{D_{0}}\right)\left(\frac{D_{4}D_{1}D_{0}D_{3}}{D_{2}}\right)
\end{eqnarray*}
and 
\begin{eqnarray*}
f_{2}\left(d\right) & = & \frac{1}{2^{3}}\sum_{m=0}^{1}\sum_{d=-4E_{0}E_{1}E_{2}E_{3}}\left(1+\Leg{E_{2}E_{3}}{2}\right)\Leg{2}{E_{2}}\\
 &  & \left(\Leg{-1}{E_{2}}m+\Leg{-1}{E_{0}}(1-m)\right)\left(\frac{E_{2}E_{3}}{E_{0}}\right)\left(\frac{E_{0}E_{1}}{E_{2}}\right).
\end{eqnarray*}
where the sum is over factorizations which satisfy the congruences
\begin{eqnarray*}
D_{2}D_{5} & \equiv & (-1)^{m+1}\mod4\\
D_{4}D_{1} & \equiv & 1\mod4
\end{eqnarray*}
and 
\begin{eqnarray*}
E_{2}E_{3} & \equiv & (-1)^{m+1}\mod4
\end{eqnarray*}
As before we compute 
\begin{eqnarray*}
f_{j_{1},j_{2}}\left(d\right) & = & \frac{1}{2^{3j_{1}+3j_{2}}}\sum_{\left(D_{u}\right)}\sum_{C\subset\{1,...,j_{1}+j_{2}\}}\prod_{u}\Leg{D_{u}}{2}^{\sum_{i\in C}Q(u_{i})}\sum_{J_{1}\subset\{1,...,j_{1}\}}\sum_{J_{2}\subset\{j_{1}+1,...,j_{1}+j_{2}\}}\frac{1}{2^{|J_{1}|}}\\
 &  & \times\Leg{-1}{D_{u}}^{\sum_{i=1}^{j_{1}+j_{2}}\lambda^{1}(u_{i})\chi_{J_{1}}(i)+\lambda^{2}(u_{i})(1-\chi_{J_{1}}(u_{i}))+\gamma^{1}(u_{i})\chi_{J_{2}}(i)+\gamma^{2}(u_{i})(1-\chi_{J_{2}}(i))}\\
 &  & \times\Leg{2}{D_{u}}^{\sum_{i=1}^{j_{1}+j_{2}}\lambda^{1}(u_{i})+\gamma^{1}(u_{i})}\prod_{u,v}\left(\frac{D_{u}}{D_{v}}\right)^{\Phi_{j_{1}}\left(u,v\right)+\Psi_{j_{2}}\left(u,v\right)}
\end{eqnarray*}
For $Q(u)=0$ if $u\in\{0,3\}$ and $1$ otherwise, $\lambda^{1}(u)=1$
iff $u=2,4$, $\lambda^{2}(u)=1$ iff $u=0,4$, $\gamma^{1}(u)=1$
iff $u=2$, and $\gamma^{2}(u)=1$ iff $u=0$. The sum is over $6^{j_{1}}4^{j_{2}}$
tuples of integers $\left(D_{u}\right)$ which satisfy $\prod_{u}D_{u}=d$
and the congruence conditions: for all $1\le i\le j_{1}$ and $\left(u_{i},v_{i}\right)\in\left\{ \left(0,3\right),\left(2,5\right),\left(4,1\right)\right\} $
and all $j_{1}+1\le i\le j_{1}+j_{2}$ and $\left(u_{i},v_{i}\right)\in\left\{ \left(0,1\right),\left(2,3\right)\right\} $

\begin{eqnarray}
\prod_{u}D_{u}\prod_{v}D_{v} & \equiv & \begin{cases}
(-1)^{|J_{1}|+1}\mod4 & (u_{i},v_{i})=(0,3),(2,5),i\le j_{1}\\
(-1)^{|J_{2}|+1}\mod4 & (u_{i},v_{i})=(0,1),i\ge j_{2}\\
1\mod4 & \text{else}
\end{cases}
\end{eqnarray}
where the above products are over all $u$ with $u_{i}$ in the $i$th
position and all $v$ with $v_{i}$ in the $i$th position.

Thus multiplying by $2^{kj_{3}\omega(d)}a^{k\omega\left(d\right)}$
and summing over discriminants $d<0$ with $d\equiv1\mod4$ we get
\begin{eqnarray*}
\sum_{d<X}2^{j_{3}(\omega(d))}a^{\omega\left(d\right)}f_{j_{1},j_{2}}\left(d\right) & = & \frac{1}{3^{j_{1}}2^{3j_{1}+3j_{2}}}\sum_{\left(D_{u}\right)}\mu^{2}\left(\prod_{u}D_{u}\right)a^{k\omega\left(d\right)}2^{kj_{3}\omega(d)}\\
 &  & \times\sum_{C\subset\{1,...,j_{1}+j_{2}\}}\sum_{J_{1}\subset\{1,...,j_{1}\}}\sum_{J_{2}\subset\{j_{1}+1,...,j_{1}+j_{2}\}}\prod_{u}\Leg{D_{u}}{2}^{\sum_{i\in C}Q(u_{i})}\\
 &  & \times\frac{1}{2^{|J_{1}|}}\Leg{-1}{D_{u}}^{\sum_{i=1}^{j_{1}+j_{2}}\lambda^{1}(u_{i})\chi_{J_{1}}(i)+\lambda^{2}(u_{i})(1-\chi_{J_{1}}(u_{i}))+\gamma^{1}(u_{i})\chi_{J_{2}}(i)+\gamma^{2}(u_{i})(1-\chi_{J_{2}}(i))}\\
 &  & \times\Leg{2}{D_{u}}^{\sum_{i=1}^{j_{1}+j_{2}}\lambda^{1}(u_{i})+\gamma^{1}(u_{i})}\prod_{u,v}\left(\frac{D_{u}}{D_{v}}\right)^{\Phi_{j_{1}}\left(u,v\right)+\Psi_{j_{2}}\left(u,v\right)}
\end{eqnarray*}
where the sum is over $6^{j_{1}}4^{j_{2}}$ tuples of integers $\left(D_{u}\right)$
which satisfy $\prod_{u}D_{u}<X$ and the conditions $\left(\ref{eq:congruence2}\right)$.

In this case, we have 
\begin{eqnarray*}
\sum_{\mathbf{A}\mathrm{\ admissible\ for\ }\mbox{\ensuremath{\mathcal{U}}}}S_{k,0,0}\left(X,\mathbf{A}\right) & = & \frac{1}{2^{3k}}\left(\sum_{\left(D_{u}\right)}\mu^{2}\left(d/2\right)a^{k\omega\left(d/4\right)}\right)\\
 &  & \times\sum_{\left(h_{u}\right)}\left[\sum_{C\subset\{1,...,k\}}\prod_{u}\left(-1\right)^{\sum_{i}Q(u_{i})\chi_{C}(i)\frac{h_{u}^{2}-1}{8}}\right]\\
 &  & \sum_{J\subset\{1,...,k\}}\frac{1}{2^{|J|}}\prod_{u}(-1)^{\frac{h_{u}-1}{2}\sum_{i=1}^{k}\lambda^{1}(u_{i})\chi_{J}(i)+\lambda^{2}(u_{i})(1-\chi_{J}(u_{i}))}\\
 &  & \times\prod_{u}(-1)^{\sum_{i}\lambda^{1}(u_{i})\frac{h_{u}^{2}-1}{8}}\\
 &  & \times\left[\prod_{u,v}\left(-1\right)^{\Phi_{k}\left(u,v\right)\frac{h_{u}-1}{2}\frac{h_{v}-1}{2}}\right]+O\left(X\left(\log X\right)^{(3a)^{k}-a^{k}-1+\epsilon}\right).
\end{eqnarray*}

For $h_{u}$ the congruence class of $D_{u}\mod8$ and $d=4\prod D_{u}$
and we grouped the $4$ factor with the first discriminant in the
factorization (i.e. for $k=1$ the factorization is $d=(4D_{0}D_{3})(D_{1}D_{4})(D_{2}D_{5})$).

Then, noting that $h_{u}$ is one out of four choices for odd numbers
$\mod8$, we get 
\begin{eqnarray*}
S_{k,0,0}(X) & = & \frac{1}{2^{4k}2^{2\cdot3^{k}}}\left(\sum_{\mathcal{U}}\gamma(\mathcal{U})\right)\left(\sum_{4n<X}\mu^{2}(2n)(3a)^{k\omega(n)}\right)+O\left(X(\log X)^{(3a)^{k}-a^{k}-1+\epsilon}\right)
\end{eqnarray*}
where we define 
\begin{eqnarray*}
\gamma(\mathcal{U}) & = & \sum_{\left(h_{u}\right)}\left[\sum_{C\subset\{1,...,k\}}\prod_{u}\left(-1\right)^{\sum_{i}(Q(u_{i})\chi_{C}(i)+\lambda^{1}(u_{i}))\frac{h_{u}^{2}-1}{8}}\right]\left[\prod_{u,v}\left(-1\right)^{\Phi_{k}\left(u,v\right)\frac{h_{u}-1}{2}\frac{h_{v}-1}{2}}\right]\\
 &  & \times\left[\sum_{J\subset\{1,...,k\}}\frac{1}{2^{|J|}}\prod_{u}(-1)^{\frac{h_{u}-1}{2}\sum_{i=1}^{k}\lambda^{1}(u_{i})\chi_{J}(i)+\lambda^{2}(u_{i})(1-\chi_{J}(u_{i}))}\right]
\end{eqnarray*}
allowing odd congruence classes $h_{u}\mod8$ satisfying the following
conditions: for all $1\le i\le k$ 
\begin{eqnarray}
\prod_{u}h_{u}\prod_{v}h_{v}\equiv\begin{cases}
1\mod4 & (u_{i},v_{i})=(1,4)\\
(-1)^{|J|+1}\mod4 & (u_{i},v_{i})=(2,5)\\
(-1)^{|J|+1}\prod_{u'}h_{u'}\prod_{v'}h_{v'} & (u_{i},v_{i})=(u'_{j},v'_{j})=(0,3)\\
 & \text{ for some }j\ne i
\end{cases}
\end{eqnarray}
where the above products are taken over all $u$ with $u_{i}$ in
the $i$th position and $v$ with $v_{i}$ in the $i$th position.
(Recalling that the odd parts of even discriminants are $-1\mod4$
if $8\nmid d$). If we call $x\equiv\left(\frac{h_{u}-1}{2}\right)\mod2$
an element of $\F_{2}^{3^{k}}$, then $x\in y+\ker M_{k}$ some coset
of $\ker M_{k}$. There are two such cosets depending on the congruence
class of $\prod_{u}h_{u}\prod_{v}h_{v}$ for $(u_{i},v_{i})=(0,3)$.

Now consider that $\Phi(u,v)=0$ if $u,v\in A=\{1,3,5\}$. $\mathcal{U}$
is a largest maximal unlinked set, and so has a type $s\in S$, so
it follows that 
\[
\Phi_{k}(u,v)=\sum_{i}\Phi(u_{i},v_{i})=\sum_{i:s_{i}=B}\Phi(u_{i},v_{i})
\]
This way we show that 
\begin{eqnarray*}
\sum_{\mathcal{U}}\gamma(\mathcal{U}) & = & \sum_{s\in S}\sum_{y}\sum_{x\in y+\ker M_{k}}\left[\sum_{(h_{u}):\frac{h_{u}-1}{2}\equiv x_{u}\mod2}\sum_{C\subset\{1,...,k\}}\prod_{u}\left(-1\right)^{\sum_{i\in C}Q(u_{i})\left(\frac{h_{u}^{2}-1}{8}\right)}\right]\\
 &  & \times\left[\prod_{u}(-1)^{\sum_{i}\lambda^{1}(u_{i})\frac{h_{u}^{2}-1}{8}}\right]\left[\prod_{\{u,v\}}\left(-1\right)^{\sum_{s_{i}=B}\Phi(u_{i},v_{i})x_{u}x_{v}}\right]\\
 &  & \times\left[\sum_{J\subset\{1,...,k\}}\frac{1}{2^{|J|}}\prod_{u}(-1)^{x_{u}\sum_{i=1}^{k}\lambda^{1}(u_{i})\chi_{J}(i)+\lambda^{2}(u_{i})(1-\chi_{J}(u_{i}))}\right]
\end{eqnarray*}
Notice that for each $x_{u}$, there are two choices of $h_{u}\mod8$
such that $\frac{h_{u}-1}{2}\equiv x_{u}\mod2$, because $h_{u}$
and $5h_{u}$ give the same image. If we fix an $(h(x)_{u})$ satisfying
this property without loss of generality also satisfying $\frac{h_{u}^{2}-1}{8}\equiv0\mod2$,
then we have 
\begin{align*}
\sum_{(h_{u}):\frac{h_{u}-1}{2}\equiv x_{u}\mod2} & \sum_{C\subset\{1,...,k\}}\prod_{u}\left(-1\right)^{\sum_{i}(Q(u_{i})\chi_{C}(i)+\lambda^{1}(u_{i}))\left(\frac{h_{u}^{2}-1}{8}\right)}\\
 & =\sum_{T\subset\mathcal{U}}\sum_{C\subset\{1,...,k\}}\prod_{u}\left(-1\right)^{\sum_{i}(Q(u_{i})\chi_{C}(i)+\lambda^{1}(u_{i}))\left(\frac{(h_{u}5^{\chi_{T}(u)})^{2}-1}{8}\right)}\\
 & =\sum_{T\subset\mathcal{U}}\sum_{C\subset\{1,...,k\}}\prod_{u}\left(-1\right)^{\sum_{i}(Q(u_{i})\chi_{C}(i)+\lambda^{1}(u_{i}))\left(\frac{5^{2\chi_{T}(u)}-1}{8}\right)}
\end{align*}
Now, $\frac{5^{2}-1}{8}\equiv1\mod2$. So it follows that $\left(\frac{5^{2\chi_{T}(u)}-1}{8}\right)\equiv(\chi_{T}(u))\mod2$
so we have 
\begin{align*}
\sum_{(h_{u}):\frac{h_{u}-1}{2}\equiv x_{u}\mod2} & \sum_{C\subset\{1,...,k\}}\prod_{u}\left(-1\right)^{\sum_{i}(Q(u_{i})\chi_{C}(i)+\lambda^{1}(u_{i}))\left(\frac{h_{u}^{2}-1}{8}\right)}\\
 & =\sum_{T\subset\mathcal{U}}\sum_{C\subset\{1,...,k\}}\prod_{u}\left(-1\right)^{\sum_{i}(Q(u_{i})\chi_{C}(i)+\lambda^{1}(u_{i}))\chi_{T}(u)}\\
 & =\sum_{T\subset\mathcal{U}}\sum_{C\subset\{1,...,k\}}\left(-1\right)^{\sum_{u,i}(Q(u_{i})\chi_{C}(i)+\lambda^{1}(u_{i}))\chi_{T}(u)}\\
 & =\sum_{C\subset\{1,...,k\}}\prod_{u}\left(1+(-1)^{\sum_{i}Q(u_{i})\chi_{C}(i)+\lambda^{1}(u_{i})}\right)
\end{align*}
By using the binomial theorem in two different ways.

We then have 
\begin{align*}
Q(u_{i})\chi_{C}(i)+\lambda(u_{i}) & =\begin{cases}
0 & u_{i}\in\{0,3,2,4\},i\in C\\
1 & u_{i}\in\{1,5\},i\in C\\
0 & u_{i}\in\{0,3,1,5\},i\not\in C\\
1 & u_{i}\in\{2,4\},i\not\in C
\end{cases}
\end{align*}
Suppose that $C\not\subset\{i:s_{i}=B\}$, then fix some $j\in C$
such that $s_{j}=A$. Choose $u\in\mathcal{U}$ such that 
\begin{align*}
u_{i}\in\begin{cases}
\{1\} & i=j\\
\{0,3\} & i\ne j
\end{cases}
\end{align*}
Then we get the summands are zero corresponding to these $C$. Suppose
next that $\{i:s_{i}=B\}\not\subset C$, then fix $j\not\in C$ such
that $s_{j}=B$ and choose $u\in\mathcal{U}$ such that 
\begin{align*}
u_{i}\in\begin{cases}
\{2\} & i=j\\
\{0,3\} & i\ne j
\end{cases}
\end{align*}
Then these summands give us zero as well. The only summand remaining
is $C=\{i:s_{i}=B\}$, which clearly gives $2^{3^{k}}$.

So then we have 
\begin{eqnarray*}
\sum_{\mathcal{U}}\gamma(\mathcal{U}) & = & 2^{3^{k}}\sum_{s\in S}\sum_{y}\sum_{x\in y+\ker M_{k}}\left[\prod_{\{u,v\}}\left(-1\right)^{\sum_{s_{i}=B}\Phi(u_{i},v_{i})x_{u}x_{v}}\right]\\
 &  & \times\left[\sum_{J\subset\{1,...,k\}}\frac{1}{2^{|J|}}\prod_{u}(-1)^{x_{u}\sum_{i=1}^{k}\lambda^{1}(u_{i})\chi_{J}(i)+\lambda^{2}(u_{i})(1-\chi_{J}(u_{i}))}\right]\\
 & = & 2^{3^{k}}\sum_{s\in S}\sum_{y}\sum_{x\in y+\ker M_{k}}(-1)^{\sum_{\{u,v\}}\sum_{i:s_{i}=B}\Phi(u_{i},v_{i})x_{u}x_{v}}\\
 &  & \times\left[\sum_{J\subset\{1,...,k\}}\frac{1}{2^{|J|}}(-1)^{\sum_{u}x_{u}\sum_{i=1}^{k}\lambda^{1}(u_{i})\chi_{J}(i)+\lambda^{2}(u_{i})(1-\chi_{J}(u_{i}))}\right]
\end{eqnarray*}
Call $\mathcal{U}_{B}=B^{k}$. Then we can write every $\mathcal{U}=\{(s,u):u\in\mathcal{U}_{B}\}$
where clearly $\Phi((s,u),(s,v))=\sum_{i:s_{i}=B}\Phi(u_{i},v_{i})$.
The same holds for $\lambda^{1},\lambda^{2}$ similarly. We can apply
the binomial theorem to the sum over types $s\in S$ to show 
\begin{eqnarray*}
\sum_{\mathcal{U}}\gamma(\mathcal{U}) & = & 2^{3^{k}}\sum_{y}\sum_{x\in y+\ker M_{k}}\sum_{J\subset\{1,...,k\}}\frac{1}{2^{|J|}}\\
 &  & \times\prod_{j=1}^{k}\left(1+(-1)^{\sum_{\{u,v\}\subset\mathcal{U}_{B}}\Phi(u_{j},v_{j})x_{u}x_{v}+\sum_{u\in\mathcal{U}_{B}}x_{u}\lambda^{1}(u_{j})\chi_{J}(j)+x_{u}\lambda^{2}(u_{j})(1-\chi_{J}(j))}\right)
\end{eqnarray*}
Notice we have 
\begin{align*}
\sum_{\{u,v\}\subset\mathcal{U}_{B}}\Phi(u_{1},v_{1})x_{u}x_{v} & =\sum_{u_{1}=0,v_{1}=2}x_{u}x_{v}+\sum_{u_{1}=0,v_{1}=4}x_{u}x_{v}+\sum_{u_{1}=2,v_{1}=4}x_{u}x_{v}\\
 & =\sum_{u_{1}=0}x_{u}\sum_{v_{1}=2}x_{v}+\sum_{u_{1}=0}x_{u}\sum_{v_{1}=4}x_{v}+\sum_{u_{1}=2}x_{u}\sum_{v_{1}=4}x_{v}
\end{align*}
\begin{align*}
\sum_{u\in\mathcal{U}_{B}}x_{u}\lambda^{1}(u_{1})\chi_{J}(1)+x_{u}\lambda^{2}(u_{1})(1-\chi_{J}(1)) & =\begin{cases}
\sum_{u_{1}=2}x_{u}+\sum_{u_{1}=4}x_{u} & 1\in J\\
\sum_{u_{1}=0}x_{u}+\sum_{u_{1}=4}x_{u} & 1\not\in J
\end{cases}
\end{align*}
Notice that we have $\sum_{u_{1}=m}x_{u}\equiv|J|+1\mod2$ if $m=0,2$
and $\equiv0\mod2$ otherwise. It then follows that 
\begin{align*}
\sum_{\{u,v\}\subset\mathcal{U}_{B}}\Phi(u_{1},v_{1})x_{u}x_{v}=(|J|+1)^{2}\equiv|J|+1\mod2\\
\sum_{u\in\mathcal{U}_{B}}x_{u}\lambda^{1}(u_{1})\chi_{J}(1)+x_{u}\lambda^{2}(u_{1})(1-\chi_{J}(1))=|J|+1
\end{align*}
Noting that squaring integers mod 2 does nothing. By symmetry the
same is true for all $j=1,...,k$, so we now have 
\begin{align*}
\sum_{\mathcal{U}}\gamma(\mathcal{U}) & =2^{3^{k}}\sum_{y}\sum_{x\in y+\ker M_{k}}\sum_{J\subset\{1,...,k\}}\frac{1}{2^{|J|}}\prod_{j=1}^{k}\left(1+(-1)^{0}\right)\\
 & =2^{3^{k}}\sum_{y}\sum_{x\in y+\ker M_{k}}2^{k}\sum_{J\subset\{1,...,k\}}\frac{1}{2^{|J|}}\\
 & =2^{\dim\ker M_{k}+3^{k}+k+1}\left(\frac{3}{2}\right)^{k}
\end{align*}
Noting that there are two choices for $y$. Recall that $\dim M_{k}=3^{k}-2k-1$.
So we have 
\begin{align*}
\sum_{\mathcal{U}}\gamma(\mathcal{U}) & =2^{2\cdot3^{k}-2k}3^{k}
\end{align*}
Thus implying 
\begin{align*}
S_{k,0,0}(X) & =\frac{3^{k}}{2^{5k}}\left(\sum_{8n<X}\mu^{2}(2n)(3a)^{k\omega(n)}\right)+O\left(X(\log X)^{(3a)^{k}-a^{k}-1+\epsilon}\right)
\end{align*}

We then have that 
\begin{align*}
\sum_{8n<X}\mu^{2}(2n)(3a)^{k\omega(n)}=\frac{1}{(3a)^{k}}\sum_{d\in\mathcal{D}_{0,8,X}^{-}}(3a)^{k\omega(d)}
\end{align*}
implies the result.

\subsection{The case $d>0$ and $d\equiv1$ mod $4$}

Next we consider fundamental discriminants $d>0$ and $d\equiv1$
mod $4$. The number of $H_{8}$ extensions of a quadratic field $k$
Galois over $\Q$ with such a discriminant is 
\[
f\left(d\right)=f_{1}\left(d\right)-f_{2}\left(d\right)+2^{\omega(d)-4}
\]
where we define

\begin{eqnarray*}
f_{1}\left(d\right) & = & \frac{1}{6}2^{-4}\sum_{d=d_{1}d_{2}d_{3}}\prod_{p\mid d_{3}}\left(1+\left(\frac{d_{1}d_{2}}{p}\right)\right)\prod_{p\mid d_{1}}\left(1+\left(\frac{d_{2}d_{3}}{p}\right)\right)\prod_{p\mid d_{2}}\left(1+\left(\frac{d_{3}d_{1}}{p}\right)\right)
\end{eqnarray*}
and 
\begin{eqnarray*}
f_{2}\left(d\right) & = & \frac{1}{2}2^{-4}\sum_{d=d_{1}d_{2}}\prod_{p\mid d_{1}}\left(1+\left(\frac{d_{2}}{p}\right)\right)\prod_{p\mid d_{2}}\left(1+\left(\frac{d_{1}}{p}\right)\right)
\end{eqnarray*}
where the factoriation $d=d_{1}d_{2}d_{3}$ is into integers such
that each $d_{i}\equiv1\mod4$ by the same reasoning as in the previous
subsection, and $\alpha(d)=0$ if all primes $p\mid d$ satisfy $p\equiv1\mod4$
and $\alpha(d)=1$ otherwise. As before this gives 
\begin{eqnarray*}
f_{1}\left(d\right)=\frac{1}{3\cdot2^{5}}\sum_{d=D_{0}D_{1}D_{2}D_{3}D_{4}D_{5}}\left(\frac{D_{0}D_{3}D_{2}D_{5}}{D_{4}}\right)\left(\frac{D_{2}D_{5}D_{4}D_{1}}{D_{0}}\right)\left(\frac{D_{4}D_{1}D_{0}D_{3}}{D_{2}}\right)
\end{eqnarray*}
and 
\begin{eqnarray*}
f_{2}\left(d\right)=\frac{1}{2^{4}}\sum_{d=E_{0}E_{1}E_{2}E_{3}}\left(\frac{E_{2}E_{3}}{E_{0}}\right)\left(\frac{E_{0}E_{1}}{E_{2}}\right).
\end{eqnarray*}
where the sum is over factorizations which satisfy the congruences
\begin{eqnarray*}
D_{0}D_{3},D_{2}D_{5},D_{1}D_{1} & \equiv & 1\mod4
\end{eqnarray*}
\begin{eqnarray*}
E_{0}E_{1},E_{2}E_{3} & \equiv & 1\mod4.
\end{eqnarray*}
As before we compute 
\[
f_{j_{1},j_{2}}\left(d\right)=\frac{1}{3^{j_{1}}2^{5j_{1}+4j_{2}}}\sum_{\left(D_{u}\right)}\prod_{u,v}\left(\frac{D_{u}}{D_{v}}\right)^{\Phi_{j_{1}}\left(u,v\right)+\Psi_{j_{2}}\left(u,v\right)}
\]
where now the sum is over $6^{j_{1}}4^{j_{2}}$ tuples of integers
$\left(D_{u}\right)$ which satisfy $\prod_{u}D_{u}=d$ and the congruence
conditions: for all $1\le i\le j_{1}$ and $\left(u_{i},v_{i}\right)\in\left\{ \left(0,3\right),\left(2,5\right),\left(4,1\right)\right\} $
and all $j_{1}+1\le i\le j_{1}+j_{2}$ and $\left(u_{i},v_{i}\right)\in\left\{ \left(0,1\right),\left(2,3\right)\right\} $

\begin{eqnarray}
\prod_{u}D_{u}\prod_{v}D_{v} & \equiv & 1\mod4\label{eq:positive1mod4congruence}
\end{eqnarray}
where the above products are over all $u$ with $u_{i}$ in the $i$th
position and all $v$ with $v_{i}$ in the $i$th position.

Thus multiplying by $2^{j_{3}\omega(d)-j_{3}}a^{k\omega\left(d\right)}$
and summing over discriminants $d<0$ with $d\equiv1\mod4$ we get
\begin{eqnarray}
\sum_{d<X}2^{j_{3}\omega(d)-j_{3}\alpha(d)}a^{\omega\left(d\right)}f_{j_{1},j_{2}}\left(d\right)=\label{eq:mainexpression2}\\
\frac{1}{3^{j_{1}}2^{4j_{1}+3j_{2}+k}}\sum_{\left(D_{u}\right)}\mu^{2}\left(\prod_{u}D_{u}\right)a^{k\omega\left(d\right)} & 2^{j_{3}\omega(d)}\prod_{u,v}\left(\frac{D_{u}}{D_{v}}\right)^{\Phi_{j_{1}}\left(u,v\right)+\Psi_{j_{2}}\left(u,v\right)}\nonumber 
\end{eqnarray}
where the sum is over $6^{j_{1}}4^{j_{2}}$ tuples of integers $\left(D_{u}\right)$
which satisfy $\prod_{u}D_{u}<X$ and the conditions $\left(\ref{eq:positive1mod4congruence}\right)$.

Here $S_{k,0,0}\left(X,\mathbf{A}\right)$ reduces to 
\begin{eqnarray*}
S_{k,0,0}\left(X,\mathbf{A}\right) & = & \sum_{\left(h_{u}\right)}\frac{1}{3^{k}2^{4k}}\sum_{\left(D_{u}\right)}2^{-k}\mu^{2}\left(d\right)a^{k\omega\left(d\right)}\left[\prod_{u,v}\left(-1\right)^{\Phi_{k}\left(u,v\right)\frac{h_{u}-1}{2}\frac{h_{v}-1}{2}}\right]\\
 &  & +O\left(X\left(\log X\right)^{(3a)^{k}-a^{k}-1+\epsilon}\right)
\end{eqnarray*}
and by the same procedure as in Section \ref{subsec:negative1mod4moments}
we obtain 
\[
S_{k,0,0}\left(X\right)=\frac{1}{3^{k}2^{5k+3^{k}}}\left(\sum_{\mathcal{U}}\gamma\left(\mathcal{U}\right)\right)\left(\sum_{n<X}\mu^{2}(2n)(3a)^{k\omega(n)}\right)+O\left(X\left(\log X\right)^{(3a)^{k}-a^{k}-1+\epsilon}\right)
\]
where now define 
\[
\gamma\left(\mathcal{U}\right)=\sum_{\left(h_{u}\right)}\left[\prod_{u,v}\left(-1\right)^{\Phi_{k}\left(u,v\right)\frac{h_{u}-1}{2}\frac{h_{v}-1}{2}}\right].
\]
Recall (see $\left(\ref{eq:positive1mod4congruence}\right)$) that
we are allowing all possible congruence classes $\left(h_{u}\right)$
satisfying the conditions: for all $1\le i\le k$ and $\left(u_{i},v_{i}\right)\in\left\{ \left(0,3\right),\left(2,5\right),\left(4,1\right)\right\} $

\begin{equation}
\prod_{u}h_{u}\prod_{v}h_{v}\equiv1\mod4\label{eq:positive1mod4congruence_k}
\end{equation}
where the above products are over all $u$ with $u_{i}$ in the $i$th
position and all $v$ with $v_{i}$ in the $i$th position.

As in Section \ref{subsec:negative1mod4moments} we identify tuples
$\left(h_{u}\right)$ with elements $x\in\mathbb{F}_{2}^{3^{k}}$.
In this case the $x$ satisfying condition $\left(\ref{eq:positive1mod4congruence_k}\right)$
are exactly the kernel of the matrix $M_{k}$ defined in the previous
section. The following proposition is proved almost identically to
Proposition \ref{prop:gamma1mod4negative}. 
\begin{prop}
For all $k\ge1$ 
\[
\sum_{\mathcal{U}}\gamma\left(\mathcal{U}\right)=2^{3^{k}-k-1}.
\]
\end{prop}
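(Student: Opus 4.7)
The plan is to follow the proof of Proposition \ref{prop:gamma1mod4negative} almost verbatim, exploiting two simplifications: the factor $\prod_u (-1)^{\lambda_k(u)(h_u-1)/2}$ is absent here, and the right-hand side of the congruence system $(\ref{eq:positive1mod4congruence_k})$ is now uniformly $1 \bmod 4$ (rather than mixed signs). As before, I would identify each admissible tuple $(h_u)$ with the vector $x \in \F_2^{3^k}$ defined by $h_u \equiv -1 \bmod 4 \iff x_u = 1$. The rephrased conditions $(\ref{eq:positive1mod4congruence_k})$ then become $M_k x = 0$; that is, $x$ ranges exactly over $\ker M_k$ (the trivial coset, in contrast to the nontrivial $y + \ker M_k$ that appeared in the negative case).

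Next, invoking Proposition \ref{thm:maxunlinkedsets}, every largest maximal unlinked set $\mathcal{U}$ has a type $s \in S$, and since $\Phi(u,v) = 0$ whenever $v \in A = \{1,3,5\}$, we may write $\Phi_k(u,v) = \sum_{i: s_i = B} \Phi(u_i,v_i)$. Summing over types via the binomial theorem exactly as in Lemma \ref{lem:sumgamma} reduces the left-hand side to
\[
\sum_{\mathcal{U}} \gamma(\mathcal{U}) = \sum_{x \in \ker M_k} \prod_{j=1}^{k} \left(1 + (-1)^{\sum_{\{u,v\} \subset \mathcal{U}_B} \Phi(u_j,v_j)\, x_u x_v}\right),
\]
where $\mathcal{U}_B = B^k$.

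It then remains to evaluate the exponent for each $j$. Expanding,
\[
\sum_{\{u,v\} \subset \mathcal{U}_B} \Phi(u_j,v_j)\, x_u x_v = \sum_{u_j = 0} x_u \sum_{v_j = 2} x_v + \sum_{u_j = 0} x_u \sum_{v_j = 4} x_v + \sum_{u_j = 2} x_u \sum_{v_j = 4} x_v.
\]
Reading off the $j$-th block of the relation $M_k x = 0$, we have $\sum_{u_j = a} x_u \equiv 0 \bmod 2$ for every $a \in \{0,2,4\}$ (this is where the uniformly trivial right-hand side of $(\ref{eq:positive1mod4congruence_k})$ enters, in contrast to the negative case where one of these sums was odd). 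Therefore the exponent vanishes, each factor in the product contributes a $2$, and combining with Lemma \ref{thm:kerMk} yields
\[
\sum_{\mathcal{U}} \gamma(\mathcal{U}) = 2^k \cdot |\ker M_k| = 2^k \cdot 2^{3^k - 2k - 1} = 2^{3^k - k - 1},
\]
as claimed.

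There is no real obstacle, since the argument is structurally identical to Proposition \ref{prop:gamma1mod4negative}; the only point requiring care is to verify that replacing the nontrivial coset $y + \ker M_k$ by $\ker M_k$ itself, together with the absence of the $\lambda$-factor, does not disturb the binomial identity and the final vanishing of the exponent. Both modifications only reinforce the cancellation, so the same count drops out.
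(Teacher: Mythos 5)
Your proof is correct and takes essentially the same approach as the paper, which simply asserts that this proposition ``is proved almost identically to Proposition \ref{prop:gamma1mod4negative}.'' Your write-up is precisely that adaptation, correctly identifying the two relevant changes: the uniformly trivial congruence conditions force $x$ to range over $\ker M_{k}$ itself, so that all three block sums $\sum_{u_{j}=a}x_{u}$ for $a\in\{0,2,4\}$ vanish and the quadratic exponent is again $0$, while the absent $\lambda$-factor only removes a contribution that was already zero in the negative case.
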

Thus we have proven:

\begin{equation}
S_{k,0,0}\left(X\right)=\frac{1}{3^{k}2^{6k+1}}\left(\sum_{n<X}\mu^{2}(2n)(3a)^{k\omega(n)}\right)+O\left(X\left(\log X\right)^{(3a)^{k}-a^{k}-1+\epsilon}\right).
\end{equation}
from which the corresponding case of Theorem \ref{thm:mainthm3} follows
by noting that 
\begin{align*}
\sum_{n<X}\mu^{2}(2n)(3a)^{k\omega(n)} & =2\sum_{d\in\mathcal{D}_{X,1,4}^{+}}(3a)^{k\omega(d)}+o(X)
\end{align*}

\subsection{\label{subsec:positive4mod8}The case $d>0$ and $d\equiv4$ mod
$8$}

Next we consider fundamental discriminants $d>0$ and $d\equiv4$
mod $8$. The number of $H_{8}$ extensions of a quadratic field $k$
Galois over $\Q$ with such a discriminant is 
\[
f\left(d\right)=f_{1}\left(d\right)-f_{2}\left(d\right)+2^{\omega(d)-4}
\]
where we define

\begin{eqnarray*}
f_{1}\left(d\right) & = & \frac{1}{2}2^{-4}\sum_{d=4d_{1}d_{2}d_{3}}\left(1+\Leg{d_{2}d_{3}}{2}\right)\prod_{p\mid d_{3}}\left(1+\left(\frac{d_{1}d_{2}}{p}\right)\right)\\
 &  & \times\prod_{p\mid d_{1}}\left(1+\left(\frac{d_{2}d_{3}}{p}\right)\right)\prod_{p\mid d_{2}}\left(1+\left(\frac{d_{3}d_{1}}{p}\right)\right)
\end{eqnarray*}
and 
\begin{eqnarray*}
f_{2}\left(d\right) & = & \frac{1}{2}2^{-4}\sum_{d=4d_{1}d_{2}}\left(1+\Leg{d_{2}}{2}\right)\prod_{p\mid d_{1}}\left(1+\left(\frac{d_{2}}{p}\right)\right)\prod_{p\mid d_{2}}\left(1+\left(\frac{d_{1}}{p}\right)\right)
\end{eqnarray*}
where the factorization $d=d_{1}d_{2}d_{3}$ is into integers such
that $d_{1}\equiv-1\mod4$ and $d_{i}\equiv1\mod4$ otherwise by the
same reasoning as in the previous subsections, noting that $\alpha(d)=1$.
As before this gives 
\begin{eqnarray*}
f_{1}\left(d\right)=\frac{1}{2^{5}}\sum_{d=D_{0}D_{1}D_{2}D_{3}D_{4}D_{5}}\left(1+\Leg{D_{1}D_{2}D_{4}D_{5}}{2}\right)\left(\frac{D_{0}D_{3}D_{2}D_{5}}{D_{4}}\right)\left(\frac{D_{2}D_{5}D_{4}D_{1}}{D_{0}}\right)\left(\frac{D_{4}D_{1}D_{0}D_{3}}{D_{2}}\right)
\end{eqnarray*}
and 
\begin{eqnarray*}
f_{2}\left(d\right)=\frac{1}{2^{4}}\sum_{d=E_{0}E_{1}E_{2}E_{3}}\left(1+\Leg{E_{2}E_{3}}{2}\right)\left(\frac{E_{2}E_{3}}{E_{0}}\right)\left(\frac{E_{0}E_{1}}{E_{2}}\right).
\end{eqnarray*}
where the sum is over factorizations which satisfy the congruences
\begin{eqnarray*}
D_{0}D_{3} & \equiv & -1\mod4\\
D_{2}D_{5},D_{1}D_{4} & \equiv & 1\mod4
\end{eqnarray*}
\begin{eqnarray*}
E_{0}E_{1} & \equiv & -1\mod4\\
E_{2}E_{3} & \equiv & 1\mod4.
\end{eqnarray*}
Now define where 
\[
Q(u)=\begin{cases}
1 & u\in\left\{ 1,2,4,5\right\} \\
0 & u\in\{0,3\}
\end{cases}
\]
and for any $B\subset\{1,...,j_{1}\}$ let $Q_{B}\left(u\right)=\sum_{i\in B}Q\left(u_{i}\right)$.
There is a similar definition for a function $S_{C}\left(u\right)$
for any $C\subset\{j_{1}+1,...,j_{2}\}$ which appears in the following
expression but the exact form is not important as we will eventually
only be concerned with the case $j_{2}=0$. As before we compute 
\begin{align*}
f_{j_{1},j_{2}}\left(d\right) & =\frac{1}{2^{5j_{1}+4j_{2}}}\sum_{\left(D_{u}\right)}\sum_{B\subset\{1,...,j_{1}\}}\sum_{C\subset\{j_{1}+1,...,j_{2}\}}\prod_{u}\Leg{D_{u}}{2}^{Q_{B}\left(u\right)+S_{C}\left(u\right)}\\
 & \times\prod_{u,v}\left(\frac{D_{u}}{D_{v}}\right)^{\Phi_{j_{1}}\left(u,v\right)+\Psi_{j_{2}}\left(u,v\right)}
\end{align*}
where the sum is over $6^{j_{1}}4^{j_{2}}$ tuples of integers $\left(D_{u}\right)$
which satisfy $\prod_{u}D_{u}=d$ and the congruence conditions: for
all $1\le i\le j_{1}$ and $\left(u_{i},v_{i}\right)\in\left\{ \left(0,3\right),\left(2,5\right),\left(4,1\right)\right\} $

\begin{eqnarray}
\prod_{u}D_{u}\prod_{v}D_{v} & \equiv & \begin{cases}
-1\mod4 & \text{if }(u_{i},v_{i})=(0,3)\\
1\mod4 & \mbox{if }\left(u_{i},v_{i}\right)=\left(2,5\right),\left(4,1\right)
\end{cases}\label{eq:positive4mod8congruence1}
\end{eqnarray}
and all $j_{1}+1\le i\le j_{1}+j_{2}$ and $\left(u_{i},v_{i}\right)\in\left\{ \left(0,1\right),\left(2,3\right)\right\} $

\begin{eqnarray}
\prod_{u}D_{u}\prod_{v}D_{v} & \equiv & \equiv\begin{cases}
-1\mod4 & \text{if }(u_{i},v_{i})=(0,1)\\
1\mod4 & \mbox{if }\left(u_{i},v_{i}\right)=\left(2,3\right)
\end{cases}\label{eq:positive4mod8congruence2}
\end{eqnarray}
and the above products are over all $u$ with $u_{i}$ in the $i$th
position and all $v$ with $v_{i}$ in the $i$th position.

Thus summing over discriminants $d<0$ with $d\equiv1\mod4$ we get
\begin{eqnarray*}
\sum_{d<X}2^{j_{3}\omega(d)}a^{\omega\left(d\right)}f_{j_{1},j_{2}}\left(d\right) & = & \frac{1}{3^{j_{1}}2^{5j_{1}+4j_{2}}}\sum_{\left(D_{u}\right)}\mu^{2}\left(\prod_{u}D_{u}\right)a^{k\omega\left(d\right)}2^{j_{3}\omega(d)}\\
 &  & \times\sum_{B\subset\{1,...,j_{1}\}}\sum_{C\subset\{j_{1}+1,...,j_{2}\}}\prod_{u}\Leg{D_{u}}{2}^{Q_{B}\left(u\right)+S_{C}\left(u\right)}\\
 &  & \times\prod_{u,v}\left(\frac{D_{u}}{D_{v}}\right)^{\Phi_{j_{1}}\left(u,v\right)+\Psi_{j_{2}}\left(u,v\right)}
\end{eqnarray*}
where the sum is over $6^{j_{1}}4^{j_{2}}$ tuples of integers $\left(D_{u}\right)$
which satisfy $\prod_{u}D_{u}<X$ and the conditions $\left(\ref{eq:positive4mod8congruence1}\right)$.

Here $S_{k,0,0}\left(X,\mathbf{A}\right)$ reduces to 
\begin{eqnarray*}
\sum_{\mathbf{A}\mathrm{\ admissible\ for\ }\mbox{\ensuremath{\mathcal{U}}}}S_{k,0,0}\left(X,\mathbf{A}\right) & = & \frac{1}{2^{5k}}\left(\sum_{\left(D_{u}\right)}\mu^{2}\left(d/2\right)a^{k\omega\left(d/4\right)}\right)\\
 &  & \times\sum_{\left(h_{u}\right)}\left[\sum_{C\subset\{1,...,k\}}\prod_{u}\left(-1\right)^{\sum_{i}Q(u_{i})\chi_{C}(i)\frac{h_{u}^{2}-1}{8}}\right]\\
 &  & \times\left[\prod_{u,v}\left(-1\right)^{\Phi_{k}\left(u,v\right)\frac{h_{u}-1}{2}\frac{h_{v}-1}{2}}\right]+O\left(X\left(\log X\right)^{(3a)^{k}-a^{k}-1+\epsilon}\right).
\end{eqnarray*}

For $h_{u}$ the congruence class of $D_{u}\mod8$ and $d=4\prod D_{u}$
and we grouped the $4$ factor with the first discriminant in the
factorization (i.e. for $k=1$ the factorization is $d=(4D_{0}D_{3})(D_{1}D_{4})(D_{2}D_{5})$).

Then, noting that $h_{u}$ is one out of four choices for odd numbers
$\mod8$, we get 
\begin{eqnarray*}
S_{k,0,0}(X) & = & \frac{1}{2^{5k}2^{2\cdot3^{k}}}\left(\sum_{\mathcal{U}}\gamma(\mathcal{U})\right)\left(\sum_{4n<X}\mu^{2}(2n)(3a)^{k\omega(n)}\right)+O\left(X(\log X)^{(3a)^{k}-a^{k}-1+\epsilon}\right)
\end{eqnarray*}
where we define 
\begin{eqnarray*}
\gamma(\mathcal{U}) & = & \sum_{\left(h_{u}\right)}\left[\sum_{C\subset\{1,...,k\}}\prod_{u}\left(-1\right)^{\sum_{i}Q(u_{i})\chi_{C}(i)\frac{h_{u}^{2}-1}{8}}\right]\left[\prod_{u,v}\left(-1\right)^{\Phi_{k}\left(u,v\right)\frac{h_{u}-1}{2}\frac{h_{v}-1}{2}}\right]
\end{eqnarray*}
allowing odd congruence classes $h_{u}\mod8$ satisfying the following
conditions: for all $1\le i\le k$ 
\begin{eqnarray}
\prod_{u}h_{u}\prod_{v}h_{v}\equiv\begin{cases}
1\mod4 & (u_{i},v_{i})\in\{(1,4),(2,5)\}\\
-1\mod4 & (u_{i},v_{i})\in\{(0,3)\}
\end{cases}
\end{eqnarray}
where the above products are taken over all $u$ with $u_{i}$ in
the $i$th position and $v$ with $v_{i}$ in the $i$th position.
(Recalling that the odd parts of even discriminants are $-1\mod4$
if $8\nmid d$). If we call $x\equiv\left(\frac{h_{u}-1}{2}\right)\mod2$
an element of $\F_{2}^{3^{k}}$, then $x\in y+\ker M_{k}$ a coset
of $\ker M_{k}$.

Now consider that $\Phi(u,v)=0$ if $u,v\in A=\{1,3,5\}$. $\mathcal{U}$
is a largest maximal unlinked set, and so has a type $s\in S$, so
it follows that 
\[
\Phi_{k}(u,v)=\sum_{i}\Phi(u_{i},v_{i})=\sum_{i:s_{i}=B}\Phi(u_{i},v_{i})
\]
This way we show that 
\begin{eqnarray*}
\sum_{\mathcal{U}}\gamma(\mathcal{U}) & = & \sum_{s\in S}\sum_{x\in y+\ker M_{k}}\left[\sum_{(h_{u}):\frac{h_{u}-1}{2}\equiv x_{u}\mod2}\sum_{C\subset\{1,...,k\}}\prod_{u}\left(-1\right)^{\sum_{i\in C}Q(u_{i})\left(\frac{h_{u}^{2}-1}{8}\right)}\right]\\
 &  & \times\left[\prod_{\{u,v\}}\left(-1\right)^{\sum_{s_{i}=B}\Phi(u_{i},v_{i})x_{u}x_{v}}\right]
\end{eqnarray*}
Notice that for each $x_{u}$, there are two choices of $h_{u}\mod8$
such that $\frac{h_{u}-1}{2}\equiv x_{u}\mod2$, because $h_{u}$
and $5h_{u}$ give the same image. If we fix an $(h(x)_{u})$ satisfying
this property without loss of generality also satisfying $\frac{h_{u}^{2}-1}{8}\equiv0\mod2$,
then we have 
\begin{align*}
\sum_{(h_{u}):\frac{h_{u}-1}{2}\equiv x_{u}\mod2}\sum_{C\subset\{1,...,k\}}\prod_{u}\left(-1\right)^{\sum_{i\in C}Q(u_{i})\left(\frac{h_{u}^{2}-1}{8}\right)} & =\sum_{T\subset\mathcal{U}}\sum_{C\subset\{1,...,k\}}\prod_{u}\left(-1\right)^{\sum_{i\in C}Q(u_{i})\left(\frac{(h_{u}5^{\chi_{T}(u)})^{2}-1}{8}\right)}\\
 & =\sum_{T\subset\mathcal{U}}\sum_{C\subset\{1,...,k\}}\prod_{u}\left(-1\right)^{\sum_{i\in C}Q(u_{i})\left(\frac{5^{2\chi_{T}(u)}-1}{8}\right)}
\end{align*}
Now, $\frac{5^{2}-1}{8}\equiv1\mod2$. So it follows that $\left(\frac{5^{2\chi_{T}(u)}-1}{8}\right)\equiv(\chi_{T}(u))\mod2$
so we have 
\begin{align*}
\sum_{(h_{u}):\frac{h_{u}-1}{2}\equiv x_{u}\mod2}\sum_{C\subset\{1,...,k\}}\prod_{u}\left(-1\right)^{\sum_{i\in C}Q(u_{i})\left(\frac{h_{u}^{2}-1}{8}\right)} & =\sum_{T\subset\mathcal{U}}\sum_{C\subset\{1,...,k\}}\prod_{u}\left(-1\right)^{\sum_{i\in C}Q(u_{i})\chi_{T}(u)}\\
 & =\sum_{T\subset\mathcal{U}}\sum_{C\subset\{1,...,k\}}\left(-1\right)^{\sum_{u,i}Q(u_{i})\chi_{C}(i)\chi_{T}(u)}\\
 & =\sum_{C\subset\{1,...,k\}}\prod_{u}\left(1+(-1)^{\sum_{i}Q(u_{i})\chi_{C}(i)}\right)
\end{align*}
By using the binomial theorem in two different ways. Now, for $s_{i}$
being $A$ or $B$, we can find a $u\in\mathcal{U}$ such that $\sum_{i\in C}Q(u_{i})\equiv1\mod2$
for any $C\ne\emptyset$, by making $u\in\{0,3\}$ for all except
one value of $i\in C$, and $u\in\{1,2,4,5\}$ for exactly one such
value. So we have the only nonzero term in this sum is for $C=\emptyset$,
which must be equal to $2^{3^{k}}$ independent of the type.

So then we have 
\begin{align*}
\sum_{\mathcal{U}}\gamma(\mathcal{U}) & =2^{3^{k}}\sum_{s\in S}\sum_{x\in y+\ker M_{k}}\left[\prod_{\{u,v\}}\left(-1\right)^{\sum_{s_{i}=B}\Phi(u_{i},v_{i})x_{u}x_{v}}\right]\\
 & =2^{3^{k}}\sum_{s\in S}\sum_{x\in y+\ker M_{k}}(-1)^{\sum_{\{u,v\}}\sum_{i:s_{i}=B}\Phi(u_{i},v_{i})x_{u}x_{v}}
\end{align*}
Call $\mathcal{U}_{B}=B^{k}$. Then we can write every $\mathcal{U}=\{(s,u):u\in\mathcal{U}_{B}\}$
where clearly $\Phi((s,u),(s,v))=\sum_{i:s_{i}=B}\Phi(u_{i},v_{i})$.
We can apply the binomial theorem to the sum over types $s\in S$
to show 
\[
\sum_{\mathcal{U}}\gamma(\mathcal{U})=2^{3^{k}}\sum_{x\in y+\ker M_{k}}\prod_{j=1}^{k}\left(1+(-1)^{\sum_{\{u,v\}\subset\mathcal{U}_{B}}\Phi(u_{j},v_{j})x_{u}x_{v}}\right)
\]
Notice we have 
\begin{align*}
\sum_{\{u,v\}\subset\mathcal{U}_{B}}\Phi(u_{1},v_{1})x_{u}x_{v} & =\sum_{u_{1}=0,v_{1}=2}x_{u}x_{v}+\sum_{u_{1}=0,v_{1}=4}x_{u}x_{v}+\sum_{u_{1}=2,v_{1}=4}x_{u}x_{v}\\
 & =\sum_{u_{1}=0}x_{u}\sum_{v_{1}=2}x_{v}+\sum_{u_{1}=0}x_{u}\sum_{v_{1}=4}x_{v}+\sum_{u_{1}=2}x_{u}\sum_{v_{1}=4}x_{v}
\end{align*}
Which are both $0$ for any $x\in W+\ker M_{k}$, as the only possible
nonzero terms $\sum_{u_{1}=0}x_{u}$ are multiplied by another term
going to $0$. By symmetry the same is true for all $j=1,...,k$,
so we now have 
\begin{align*}
\sum_{\mathcal{U}}\gamma(\mathcal{U}) & =2^{3^{k}}\sum_{x\in y+\ker M_{k}}2^{k}\\
 & =2^{\dim\ker M_{k}+3^{k}+k}
\end{align*}
Recall that $\dim M_{k}=3^{k}-2k-1$. Also it follows that $W$ has
a basis of vectors $w$ such that $\sum_{u:u_{i}=j}w_{u}=1$ for exactly
one pair of $1\le i\le k$ and $j\in\{0,3\}$ (corresponding to the
type). In other words, $\dim W=k$. So we have 
\begin{align*}
\sum_{\mathcal{U}}\gamma(\mathcal{U}) & =2^{2\cdot3^{k}-k-1}
\end{align*}
Thus implying 
\begin{align*}
S_{k,0,0}(X) & =\frac{1}{2^{6k+1}}\left(\sum_{4n<X}\mu^{2}(2n)(3a)^{k\omega(n)}\right)+O\left(X(\log X)^{(3a)^{k}-a^{k}-1+\epsilon}\right)
\end{align*}
from which the corresponding case of \ref{thm:mainthm3} follows by
noting 
\begin{align*}
\sum_{4n<X}\mu^{2}(2n)(3a)^{k\omega(n)} & =\frac{2}{(3a)^{k}}\sum_{d\in\mathcal{D}_{X,4,8}^{+}}(3a)^{k\omega(d)}
\end{align*}

\subsection{The case $d>0$ and $d\equiv0$ mod $8$}

Next we consider fundamental discriminants $d>0$ and $d\equiv0$
mod $8$. The number of $H_{8}$ extensions of a quadratic field $k$
Galois over $\Q$ with such a discriminant is 
\[
f\left(d\right)=f_{1}\left(d\right)-f_{2}\left(d\right)+2^{\omega(d)-4}
\]
where we define

\begin{eqnarray*}
f_{1}\left(d\right) & = & \frac{1}{2}2^{-4}\sum_{d=8d_{1}d_{2}d_{3}}\left(1+\Leg{d_{2}d_{3}}{2}\right)\prod_{p\mid d_{3}}\left(1+\left(\frac{2d_{1}d_{2}}{p}\right)\right)\\
 &  & \times\prod_{p\mid d_{1}}\left(1+\left(\frac{d_{2}d_{3}}{p}\right)\right)\prod_{p\mid d_{2}}\left(1+\left(\frac{2d_{3}d_{1}}{p}\right)\right)
\end{eqnarray*}
and 
\begin{eqnarray*}
f_{2}\left(d\right) & = & 2^{-4}\sum_{d=8d_{1}d_{2}}\left(1+\Leg{d_{2}}{2}\right)\prod_{p\mid d_{1}}\left(1+\left(\frac{d_{2}}{p}\right)\right)\prod_{p\mid d_{2}}\left(1+\left(\frac{2d_{1}}{p}\right)\right)
\end{eqnarray*}
where the factoriation $d=d_{1}d_{2}d_{3}$ is into integers such
that $d_{1}\equiv-1\mod4$ and $d_{i}\equiv1\mod4$ otherwise by the
same reasoning as in the previous subsections, where $\alpha(d)=0$
is all odd primes dividing $d$ satisfy $p\equiv1mod4$ and $\alpha(d)=1$
otherwise. As before this gives 
\begin{eqnarray*}
f_{1}\left(d\right) & = & \frac{1}{2^{5}}\sum_{d=D_{0}D_{1}D_{2}D_{3}D_{4}D_{5}}\left(1+\Leg{D_{1}D_{2}D_{4}D_{5}}{2}\right)\Leg{2}{D_{2}D_{4}}\\
 &  & \times\left(\frac{D_{0}D_{3}D_{2}D_{5}}{D_{4}}\right)\left(\frac{D_{2}D_{5}D_{4}D_{1}}{D_{0}}\right)\left(\frac{D_{4}D_{1}D_{0}D_{3}}{D_{2}}\right)
\end{eqnarray*}
and 
\begin{eqnarray*}
f_{2}\left(d\right)=\frac{1}{2^{4}}\sum_{d=E_{0}E_{1}E_{2}E_{3}}\left(1+\Leg{E_{2}E_{3}}{2}\right)\Leg{2}{E_{2}}\left(\frac{E_{2}E_{3}}{E_{0}}\right)\left(\frac{E_{0}E_{1}}{E_{2}}\right)
\end{eqnarray*}
where the sum is over factorizations which satisfy the congruences
\begin{eqnarray*}
D_{2}D_{5},D_{1}D_{4} & \equiv & 1\mod4
\end{eqnarray*}
\begin{eqnarray*}
E_{2}E_{3} & \equiv & 1\mod4.
\end{eqnarray*}

Now define $Q_{B}\left(u\right)$ and $S_{C}\left(u\right)$ for $B\subset\left\{ 1,\ldots,j_{1}\right\} $
and $C\subset\{j_{1}+1,...,j_{2}\}$as in Section \ref{subsec:positive4mod8}.
As before we compute 
\begin{align*}
f_{j_{1},j_{2}}\left(d\right) & =\frac{1}{2^{5j_{1}+4j_{2}}}\sum_{\left(D_{u}\right)}\sum_{B\subset\{1,...,j_{1}\}}\sum_{C\subset\{j_{1}+1,...,j_{2}\}}\prod_{u}\Leg{D_{u}}{2}^{Q_{B}\left(u\right)+S_{C}\left(u\right)}\\
 & \times\Leg{2}{D_{u}}^{\lambda_{j_{1}}(u)+\gamma_{j_{2}}(u)}\prod_{u,v}\left(\frac{D_{u}}{D_{v}}\right)^{\Phi_{j_{1}}\left(u,v\right)+\Psi_{j_{2}}\left(u,v\right)}\\
\end{align*}
The sum is over $6^{j_{1}}4^{j_{2}}$ tuples of integers $\left(D_{u}\right)$
which satisfy $\prod_{u}D_{u}=d$ and the congruence conditions: for
all $1\le i\le j_{1}$ and $\left(u_{i},v_{i}\right)\in\left\{ \left(2,5\right),\left(4,1\right)\right\} $
and all $j_{1}+1\le i\le j_{1}+j_{2}$ and $\left(u_{i},v_{i}\right)\in\left\{ \left(2,3\right)\right\} $

\begin{eqnarray}
\prod_{u}D_{u}\prod_{v}D_{v} & \equiv & 1\mod4\label{eq:positive0mod8congruence}
\end{eqnarray}
where the above products are over all $u$ with $u_{i}$ in the $i$th
position and all $v$ with $v_{i}$ in the $i$th position.

Thus summing over discriminants $d<0$ with $d\equiv1\mod4$ we get
\begin{eqnarray*}
\sum_{d<X}2^{j_{3}\omega(d)-j_{3}}a^{\omega\left(d\right)}f_{j_{1},j_{2}}\left(d\right) & = & \frac{1}{3^{j_{1}}2^{5j_{1}+4j_{2}+k}}\sum_{\left(D_{u}\right)}\mu^{2}\left(\prod_{u}D_{u}\right)a^{k\omega\left(d\right)}2^{j_{3}\omega(d)}\\
 &  & \times\sum_{B\subset\{1,...,j_{1}\}}\sum_{C\subset\{j_{1}+1,...,j_{2}\}}\prod_{u}\Leg{D_{u}}{2}^{Q_{B}\left(u\right)+S_{C}\left(u\right)}\\
 &  & \times\Leg{2}{D_{u}}^{\lambda_{j_{1}}(u)+\gamma_{j_{2}}(u)}\prod_{u,v}\left(\frac{D_{u}}{D_{v}}\right)^{\Phi_{j_{1}}\left(u,v\right)+\Psi_{j_{2}}\left(u,v\right)}
\end{eqnarray*}
where the sum is over $6^{j_{1}}4^{j_{2}}$ tuples of integers $\left(D_{u}\right)$
which satisfy $\prod_{u}D_{u}<X$ and the conditions $\left(\ref{eq:positive0mod8congruence}\right)$.

In this case, as in the $d>0$ and $d\equiv1\mod4$ case, we have
\begin{eqnarray*}
\sum_{\mathbf{A}\mathrm{\ admissible\ for\ }\mbox{\ensuremath{\mathcal{U}}}}S_{k,0,0}\left(X,\mathbf{A}\right) & = & \frac{1}{2^{5k}}\left(\sum_{\left(D_{u}\right)}2^{-k}\mu^{2}\left(d/2\right)a^{k\omega\left(d/4\right)}\right)\\
 &  & \times\sum_{\left(h_{u}\right)}\left[\sum_{C\subset\{1,...,k\}}\prod_{u}\left(-1\right)^{\sum_{i}Q(u_{i})\chi_{C}(i)\frac{h_{u}^{2}-1}{8}+\lambda_{k}(u)\frac{h_{u}^{2}-1}{8}}\right]\\
 &  & \times\left[\prod_{u,v}\left(-1\right)^{\Phi_{k}\left(u,v\right)\frac{h_{u}-1}{2}\frac{h_{v}-1}{2}}\right]+O\left(X\left(\log X\right)^{(3a)^{k}-a^{k}-1+\epsilon}\right).
\end{eqnarray*}

For $h_{u}$ the congruence class of $D_{u}\mod8$ and $d=8\prod D_{u}$
and we grouped the $8$ factor with the first discriminant in the
factorization (i.e. for $k=1$ the factorization is $d=(8D_{0}D_{3})(D_{1}D_{4})(D_{2}D_{5})$).

Noting that $h_{u}$ is one out of four choices for odd numbers $\mod8$,
we get 
\begin{eqnarray*}
S_{k,0,0}(X) & = & \frac{1}{2^{6k}2^{2\cdot3^{k}}}\left(\sum_{\mathcal{U}}\gamma(\mathcal{U})\right)\left(\sum_{8n<X}\mu^{2}(2n)(3a)^{k\omega(n)}\right)+O\left(X(\log X)^{(3a)^{k}-a^{k}-1+\epsilon}\right)
\end{eqnarray*}
where we define 
\begin{eqnarray*}
\gamma(\mathcal{U}) & = & \sum_{\left(h_{u}\right)}\left[\sum_{C\subset\{1,...,k\}}\prod_{u}\left(-1\right)^{\sum_{i}Q(u_{i})\chi_{C}(i)\frac{h_{u}^{2}-1}{8}}\right]\left[\prod_{u}(-1)^{\lambda_{k}(u)\frac{h_{u}^{2}-1}{8}}\right]\left[\prod_{u,v}\left(-1\right)^{\Phi_{k}\left(u,v\right)\frac{h_{u}-1}{2}\frac{h_{v}-1}{2}}\right]
\end{eqnarray*}
allowing odd congruence classes $h_{u}\mod8$ satisfying the following
conditions: for all $1\le i\le k$ 
\begin{eqnarray}
\prod_{u}h_{u}\prod_{v}h_{v}\equiv\begin{cases}
1\mod4 & (u_{i},v_{i})\in\{(1,4),(2,5)\}\\
\prod_{u'}h_{u'}\prod_{v'}h_{v'} & (u_{i},v_{i})=(u_{i}',v_{i}')=(0,3)
\end{cases}
\end{eqnarray}
where the above products are taken over all $u$ with $u_{i}$ in
the $i$th position and $v$ with $v_{i}$ in the $i$th position.

If we call $x\equiv\left(\frac{h_{u}-1}{2}\right)\mod2$ an element
of $\F_{2}^{3^{k}}$, then $x\in y+\ker M_{k}$ one of two cosets
of $\ker M_{k}$ depending on the congruence class of $\prod_{u}h_{u}\prod_{v}h_{v}$
for $(u_{i},v_{i})=(0,3)$.

Now consider that $\Phi(u,v)=0$ if $u,v\in A=\{1,3,5\}$. $\mathcal{U}$
is a largest maximal unlinked set, and so has a type $s\in S$, so
it follows that 
\[
\Phi_{k}(u,v)=\sum_{i}\Phi(u_{i},v_{i})=\sum_{i:s_{i}=B}\Phi(u_{i},v_{i})
\]
This way we show that 
\begin{eqnarray*}
\sum_{\mathcal{U}}\gamma(\mathcal{U}) & = & \sum_{s\in S}\sum_{y}\sum_{x\in y+\ker M_{k}}\left[\sum_{(h_{u}):\frac{h_{u}-1}{2}\equiv x_{u}\mod2}\sum_{C\subset\{1,...,k\}}\prod_{u}\left(-1\right)^{\sum_{i\in C}Q(u_{i})\left(\frac{h_{u}^{2}-1}{8}\right)}\right]\\
 &  & \left[\times\prod_{u}(-1)^{\sum_{s:s_{i}=B}\lambda(u_{i})\frac{h_{u}^{2}-1}{8}}\right]\\
 &  & \times\left[\prod_{\{u,v\}}\left(-1\right)^{\sum_{s_{i}=B}\Phi(u_{i},v_{i})x_{u}x_{v}}\right]
\end{eqnarray*}
Where the sum of $y$ is over $2^{k}$ cosets of $\ker M_{k}$ to
account for the missing condition on $u_{i}\in\{0,3\}$. Notice that
for each $x_{u}$, there are two choices of $h_{u}\mod8$ such that
$\frac{h_{u}-1}{2}\equiv x_{u}\mod2$, because $h_{u}$ and $5h_{u}$
give the same image. If we fix an $(h(x)_{u})$ satisfying this property
without loss of generality also satisfying $\frac{h_{u}^{2}-1}{8}\equiv0\mod2$,
then we have 
\begin{align*}
 & \sum_{(h_{u}):\frac{h_{u}-1}{2}\equiv x_{u}\mod2}\sum_{C\subset\{1,...,k\}}\prod_{u}\left(-1\right)^{\left(\sum_{i\in C}Q(u_{i})+\lambda_{k}(u)\right)\left(\frac{h_{u}^{2}-1}{8}\right)}\\
 & =\sum_{T\subset\mathcal{U}}\sum_{C\subset\{1,...,k\}}\prod_{u}\left(-1\right)^{\left(\sum_{i\in C}Q(u_{i})+\lambda_{k}(u)\right)\left(\frac{(h_{u}5^{\chi_{T}(u)})^{2}-1}{8}\right)}\\
 & =\sum_{T\subset\mathcal{U}}\sum_{C\subset\{1,...,k\}}\prod_{u}\left(-1\right)^{\left(\sum_{i\in C}Q(u_{i})+\lambda_{k}(u)\right)\left(\frac{5^{2\chi_{T}(u)}-1}{8}\right)}
\end{align*}
Now, $\frac{5^{2}-1}{8}\equiv1\mod2$. So it follows that $\left(\frac{5^{2\chi_{T}(u)}-1}{8}\right)\equiv(\chi_{T}(u))\mod2$
so we have 
\begin{align*}
 & \sum_{(h_{u}):\frac{h_{u}-1}{2}\equiv x_{u}\mod2}\sum_{C\subset\{1,...,k\}}\prod_{u}\left(-1\right)^{\left(\sum_{i\in C}Q(u_{i})+\lambda_{k}(u)\right)\left(\frac{h_{u}^{2}-1}{8}\right)}\\
 & =\sum_{T\subset\mathcal{U}}\sum_{C\subset\{1,...,k\}}\prod_{u}\left(-1\right)^{\left(\sum_{i\in C}Q(u_{i})+\lambda_{k}(u)\right)\chi_{T}(u)}\\
 & =\sum_{T\subset\mathcal{U}}\sum_{C\subset\{1,...,k\}}\left(-1\right)^{\sum_{u,i}Q(u_{i})\chi_{C}(i)\chi_{T}(u)+\lambda_{k}(u)\chi_{T}(u)}\\
 & =\sum_{C\subset\{1,...,k\}}\prod_{u}\left(1+(-1)^{\sum_{i}Q(u_{i})\chi_{C}(i)+\lambda(u_{i})}\right)
\end{align*}
We then have 
\begin{align*}
Q(u_{i})\chi_{C}(i)+\lambda(u_{i}) & =\begin{cases}
0 & u_{i}\in\{0,3,2,4\},i\in C\\
1 & u_{i}\in\{1,5\},i\in C\\
0 & u_{i}\in\{0,3,1,5\},i\not\in C\\
1 & u_{i}\in\{2,4\},i\not\in C
\end{cases}
\end{align*}
Suppose that $C\not\subset\{i:s_{i}=B\}$, then fix some $j\in C$
such that $s_{j}=A$. Choose $u\in\mathcal{U}$ such that 
\begin{align*}
u_{i}\in\begin{cases}
\{1\} & i=j\\
\{0,3\} & i\ne j
\end{cases}
\end{align*}
Then we get the summands are zero corresponding to these $C$. Suppose
next that $\{i:s_{i}=B\}\not\subset C$, then fix $j\not\in C$ such
that $s_{j}=B$ and choose $u\in\mathcal{U}$ such that 
\begin{align*}
u_{i}\in\begin{cases}
\{2\} & i=j\\
\{0,3\} & i\ne j
\end{cases}
\end{align*}
Then these summands give us zero as well. The only summand remaining
is $C=\{i:s_{i}=B\}$, which clearly gives $2^{3^{k}}$.

So then we have 
\begin{align*}
\sum_{\mathcal{U}}\gamma(\mathcal{U}) & =2^{3^{k}}\sum_{s\in S}\sum_{y}\sum_{x\in y+\ker M_{k}}\left[\prod_{\{u,v\}}\left(-1\right)^{\sum_{s_{i}=B}\Phi(u_{i},v_{i})x_{u}x_{v}}\right]\\
 & =2^{3^{k}}\sum_{y}\sum_{s\in S}\sum_{x\in y+\ker M_{k}}(-1)^{\sum_{\{u,v\}}\sum_{i:s_{i}=B}\Phi(u_{i},v_{i})x_{u}x_{v}}
\end{align*}
Noting that there are $2$ choices for $y$, the same proof for the
case $d>0$, $d\equiv4\mod8$ implies 
\begin{align*}
S_{k,0,0}(X) & =\frac{1}{2^{6k}}\left(\sum_{8n<X}\mu^{2}(2n)(3a)^{k\omega(n)}\right)+O\left(X(\log X)^{(3a)^{k}-a^{k}-1+\epsilon}\right)
\end{align*}
Then the corresponding case of \ref{thm:mainthm3} follows from 
\begin{align*}
\sum_{8n<X}\mu^{2}(2n)(3a)^{k\omega(n)} & =\frac{1}{(3a)^{k}}\sum_{d\in\mathcal{D}_{X,0,8}^{+}}(3a)^{k\omega(d)}+o(X).
\end{align*}

 \bibliographystyle{plain}
\bibliography{bibliography}

\end{document}